\newcommand{\midarrow}{\tikz \draw[-triangle 90] (0,0) -- +(.05,0);}
\theoremstyle{plain}
\newtheorem{theorem}{Theorem}[section]  
\newtheorem{proposition}[theorem]{Proposition} 
\newtheorem{lem}[theorem]{Lemma} 
\newtheorem{cor}[theorem]{Corollary}
\theoremstyle{definition} 
\newtheorem{conjecture}[theorem]{Conjecture}
\DeclareMathOperator{\re}{Re}
\newcommand{\etalchar}[1]{$^{#1}$}
\newcommand{\C}{\mathbb{C}}
\newcommand{\N}{\mathbb{N}}
\newcommand{\R}{\mathbb{R}}
\newcommand{\Z}{\mathbb{Z}}
\newcommand{\F}{\mathbb{F}}
\renewcommand{\mod}[1]{\,(\mathrm{mod}\,#1)}
\renewcommand{\pmod}[1]{\,(\mathrm{mod}\,#1)}
\definecolor{pink}{rgb}{1,.2,.6}
\definecolor{orange}{rgb}{0.7,0.3,0}
\definecolor{blue}{rgb}{.2,.6,.75}
\definecolor{green}{rgb}{.4,.7,.4}
\definecolor{purple}{RGB}{127,0,255}
\newcommand{\sos}{sums of two squares }
\newcommand{\kommentar}[1]{}
\title[Bias for consecutive sums of two squares]{Lemke Oliver and Soundararajan bias \\  for consecutive sums of two squares}
\author{Chantal David, Lucile Devin, Jungbae Nam and Jeremy Schlitt}
\address{Department of Mathematics \& Statistics, Concordia University, 1455 de Maisonneuve Blvd. West, Montr\'{e}al, Qu\'{e}bec, H3G 1M8, CANADA}
\email{chantal.david@concordia.ca} 
\address{Department of Mathematical Sciences, Chalmers University of Technology and the University of Gothenburg, SE-412 96 Gothenburg, Sweden}
\email{devin@chalmers.se}
\address{Department of Mathematics \& Statistics, Concordia University, 1455 de Maisonneuve Blvd. West, Montr\'{e}al, Qu\'{e}bec, H3G 1M8, CANADA}
\email{jungbae.nam@concordia.ca} 
\address{Department of Mathematics \& Statistics, Concordia University, 1455 de Maisonneuve Blvd. West, Montr\'{e}al, Qu\'{e}bec, H3G 1M8, CANADA}
\email{jeremy.schlitt@mail.concordia.ca}
\keywords{Sums of two squares, Hardy--Littlewood conjecture, Selberg--Delange method} 
\subjclass[2010]{11N25, 11N69, 11P55} 
\begin{document}

	\begin{abstract}
		 In a surprising recent work, Lemke Oliver and Soundararajan noticed how experimental data exhibits erratic distributions for consecutive pairs of primes in arithmetic progressions, and proposed a heuristic model based on the Hardy--Littlewood conjectures containing a large secondary term, which fits the data very well. In this paper, we study  consecutive pairs of sums of squares in arithmetic progressions, and develop a similar heuristic model based on the Hardy--Littlewood conjecture for sums of squares, which also explain the biases in the experimental data.
		 In the process, we prove several results related to averages of the Hardy--Littlewood constant in the context of sums of two squares.
	\end{abstract}

\vspace*{.01 in}
\maketitle
\vspace{.08 in}

\section{Introduction}\label{section intro}

We study in this paper the distribution of consecutive sums of two squares in arithmetic progressions. Our work is inspired by a recent paper of Lemke Oliver and Soundararajan \cite{LOS} who proposed a heuristic model based on the Hardy--Littlewood conjecture for the distribution of consecutive primes in arithmetic progressions.

Roughly speaking, it is expected that numbers described by reasonable multiplicative constraints should be well-distributed, in short intervals and in arithmetic progressions. The case of prime numbers is of course well-studied, and this philosophy was also tested for numbers expressible as sums of two squares, as well as square-free numbers\footnote{In the case of square-free numbers, the Hardy--Littlewood conjecture is a theorem \cite{Mir}, and the analogue of \cite{LOS} has been proved recently by Mennema \cite{Men}.}. 
Gallagher \cite{Ga1976} proved that the distribution of primes of size up to $x$ in intervals of size $\log{x}$ has a Poisson spacing distribution, assuming some explicit form of the Hardy--Littlewood conjecture. This was generalized to sums of two squares by Freiberg, Kurlberg and Rosenzweig \cite{FKR}, for intervals of size $\sqrt{\log{x}}/K$, which is the correct analogue to Gallagher's result in view of~\eqref{landau-intro}. For primes in larger intervals, Montgomery and Soundararajan \cite{MS} showed that the spacings exhibit a normal distribution around the mean, assuming again some explicit form of the Hardy--Littlewood conjecture. We prove in this paper a weaker version of their results (Theorem~\ref{Prop bound for D with large sets}) for the case of sums of two squares which is needed to study the distribution of successive sums of two squares in arithmetic progressions.
We speculate that the full analogue of their results can be obtained for sums of two squares, but we did not pursue it as Theorem~\ref{Prop bound for D with large sets}
is sufficient for our application.
Some unexpected irregularities in the distribution of primes in short intervals were discovered by Maier \cite{Ma85}, and it was shown by Balog and Wooley \cite{BW} that sums of two squares exhibit the same irregularities. Sums of two squares in short intervals were also studied over function fields of a finite field $\F_q$, where many results which are inaccessible over number fields can be proven when the size of the finite field $\F_q$ grows \cite{BYW16, BBF18, BF19, GR20}.

 We first fix some notations. We denote by $$\mathbf{E}= \lbrace a^2 + b^2 : a,b \in \Z\rbrace = \lbrace E_n : n\in \N\rbrace$$ the set of sums of two squares (enumerated in increasing order), such that $E_n$ is the $n$th number that can be written as a sum of two squares. Let $\mathbf{1}_{\mathbf{E}}$ the indicator function of this set.  By a classical result of Landau,
\begin{align} \label{landau-intro}
\sum_{n \leq x}  \mathbf{1}_{\mathbf{E}}(n)  \sim K \frac{x}{\sqrt{\log{x}}},\end{align}
where $K$ is the constant defined by~\eqref{LR-constant}. 
The distribution of sum of two squares in arithmetic progressions exhibit different behavior depending on the modulus $q$ of the progression, and we restrict in this paper to the case where $q$  is a prime number such that $q \equiv 1 \mod 4$. In that case, the sums of squares are equidistributed in all the residue classes $a \mod q$, including the class $a \equiv 0 \mod q$ (see Theorem~\ref{thm-SOS-AP-1}), but unlike the case of the primes, there is a large secondary term depending if  the residue class $a \equiv 0 \mod q$ or not 
(see Theorem~\ref{thm-SOS-AP}).

We consider in this paper the following question, which was studied by Lemke Oliver and Soundararajan for primes \cite{LOS}.
Fix a prime number $q \equiv 1 \mod 4$, and integers $a,b$. 
What is the distribution of 
$$N(x;q,(a,b))  := \# \{ E_n \leq x \;:\; E_n \equiv a \mod q, \; E_{n+1} \equiv b \mod q \} \text{ ?}$$ 
Using a model based on randomness, we expect successive sums of two squares to be well-distributed in arithmetic progressions, and each of the $q^2$ pairs of classes $(a,b)$ to contain the same proportion (asymptotically) of sums of two squares, with possibly a bias towards the pairs $(a,b)$ where $ab \equiv 0 \mod q$ in view of Theorem~\ref{thm-SOS-AP}. However, the numerical data of Table~\ref{table:actual-pairs-sts-5} (for $q=5$ and $x=10^{12}$) shows a lot of fluctuation, and in particular an unexpected large bias against the classes $(a,a)$ including $(0,0)$.
Interestingly, this bias goes in the opposite direction of the bias for sums of squares in arithmetic progressions: there are ``more'' \sos congruent to $0 \mod q$, but there are ``less'' consecutive \sos congruent to $(0,0) \mod q$.

\begin{table}[htp!]
	\begin{tabular}{cc|cccc|ccccc}
		$a$ & $b$ & $N (10^{12};5,(a,b))$ &   & $a$ & $b$ & $N (10^{12};5,(a,b))$ &   & $a$ & \multicolumn{1}{l|}{$b$} & $N (10^{12};5,(a,b))$ \\ \cline{1-3} \cline{5-7} \cline{9-11} 
		0 & 0 &   4 108 407 474   &  & 2 & 0 &  8 049 996 586     &  &  4 & \multicolumn{1}{l|}{0} & 7 155 732 959\\
		& 1 &   7 153 121 164  &  &   & 1 &  5 516 037 772  &  &    & \multicolumn{1}{l|}{1} & 5 356 545 210 \\
		& 2 &   5 604 312 560 &  &   & 2 &   3 754 593 831 &  &  & \multicolumn{1}{l|}{2} & 7 730 855 281 \\
		& 3 &   8 054 714 831 &  &   & 3 &  6 837 553 372  &  &    & \multicolumn{1}{l|}{3} & 5 497 266 920 \\
		& 4 &   5 780 373 060 &   &   & 4 &    5 350 735 550  &  &    & \multicolumn{1}{l|}{4} & 3 768 530 444\\
		& & & & & & & & & & \\
		1 & 0 &   5 777 315 850    &  & 3 & 0 &     5 609 476 219 &   &   &     &                      \\
		& 1 &   3 765 205 659  &  &   & 1 &      7 718 021 263   &   &   &      &                    \\
		& 2 &   6 870 009 299   &  &   & 2 &    5 549 146 140    &  &   &    &                       \\
		& 3 &   5 354 226 097  &  &   & 3 &      3 765 159 558  &  &   &   &                         \\
		& 4 &   7 742 174 162   &  &   & 4 &      6 867 117 598  &  &   &     &                        
	\end{tabular}
	\caption{$N(x; q, (a,b))$ for $q = 5$ and $x = 10^{12}$. The average of $N(x; q, (a,b))$ is 5 949 465 154.}
	\label{table:actual-pairs-sts-5}
\end{table}

Estimates for the consecutive sums of squares (or consecutive primes) in arithmetic progressions is a very difficult question, and  few results are known. For consecutive primes in arithmetic progressions, it was conjectured by Chowla that there are infinitively many primes $p_n$ such that $p_{n+i-1} \equiv a \mod q$ for $1 \leq i \leq r$, for any $(a,q)=1$ and $r \geq 2$. This was proven by Shiu \cite{Sh00}. Recent progress in sieve theory have led to a new proof of Shiu's result \cite{BFT15}, and Maynard has proven that the number of such primes is $\gg \pi(x)$ \cite{Ma16}. It would be interesting to see if those recent progresses could be applied to get lower bounds for the number of successive sums of two squares $E_n$ such that $E_{n+i-1} \equiv a \mod q$ for $1 \leq i \leq r$, for any $a$ and $r \geq 2$, but this question was not addressed yet in the literature.

We propose in this paper a heuristic model predicting an asymptotic for $N(x; q,(a,b))$, based on the heuristic of Lemke Oliver and Soundararajan \cite{LOS} for the case of primes, and exhibiting a similar bias.

\begin{conjecture} \label{main-conjecture-intro} Fix a prime $q \equiv 1 \mod 4$, and $J \geq 1$. Then, for any $a\in \N$, we have
\begin{align*}
		N(x;q,(a,a)) &= \frac{K}{q^2} \frac{ x}{ \sqrt{\log x}} \bigg( 1 - \frac{\sqrt{2} \phi(q)}{\pi}\frac{\sqrt{\log \log{x}}}{\sqrt{\log{x}}} + \frac{1}{\sqrt{\log{x}}} \sum_{j = 1}^J C_j (\log\log x)^{\frac{1}{2}-j} \bigg)\\
		&+ O \left( \frac{x}{\log{x}(\log{\log x})^{J+ \frac{1}{2}}} \right), 
\end{align*}
for some explicit constants $C_j$ depending only on $q$.
For $a,b\in \N$ with $a \not\equiv b \mod q$, we have
\begin{align*}
	N(x;q,(a,b)) &= \frac{K}{q^2} \frac{ x}{ \sqrt{\log x}} \bigg( 1 + \frac{\sqrt{2}}{\pi} \frac{\sqrt{\log \log{x}}}{\sqrt{\log{x}}} + \frac{C_{a,b}}{\sqrt{\log{x}}} - 
	\frac{1}{\phi(q)\sqrt{\log{x}}} \sum_{j = 1}^J C_j (\log\log x)^{\frac{1}{2}-j} \bigg) \\ &+ O \left( \frac{x}{\log{x}(\log{\log x})^{J+ \frac{1}{2}}} \right), 
\end{align*}
with
\begin{equation}\label{D_a,b-intro}
C_{a,b} := \frac{1}{2K}\frac{q}{\phi(q)}\sum_{\chi\neq \chi_0}\overline{\chi}(b-a)C_{q,\chi},
\end{equation}
where the sum is over the non-principal Dirichlet characters modulo $q$ and $C_{q, \chi}$ is defined in~\eqref{def-C}. The value of $C_1$  is given in 
Conjecture~\ref{main-conjecture-J1}. 
\end{conjecture}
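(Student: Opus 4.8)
The plan is to \emph{derive} the claimed asymptotics from a Hardy--Littlewood-type heuristic for sums of two squares, mirroring the argument of Lemke Oliver and Soundararajan for primes. First I would write, for each admissible gap $h\ge 1$,
\[
N(x;q,(a,b)) = \sum_{h\ge 1} \#\{ m\le x : m\in\mathbf E,\ m+h\in\mathbf E,\ (m,m+h)\cap\mathbf E=\emptyset,\ m\equiv a,\ m+h\equiv b \pmod q\},
\]
and then model the event ``$(m,m+h)\cap\mathbf E=\emptyset$'' by inclusion--exclusion over which of the intermediate integers lie in $\mathbf E$. This replaces $N(x;q,(a,b))$ by an alternating sum of counts of $k$-tuples of sums of two squares lying in prescribed residue classes mod $q$, each governed (conjecturally) by a Hardy--Littlewood singular series $\mathfrak S(\mathcal H)$ for sums of two squares attached to the tuple $\mathcal H$. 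Detecting the conditions $m\equiv a$ and $m+h\equiv b\pmod q$ with Dirichlet characters mod $q$ splits the computation into a principal-character piece and contributions of non-principal characters $\chi\ne\chi_0$; the latter get encoded by the averaged quantities $C_{q,\chi}$ of~\eqref{def-C}, which ultimately produce the constant $C_{a,b}$ of~\eqref{D_a,b-intro}.

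Second, the heart of the matter is to evaluate the resulting sums over $h$ of (twisted) averages of the Hardy--Littlewood constant. Here I would invoke the results on averages of $\mathfrak S$ proved earlier in the paper --- in particular Theorem~\ref{Prop bound for D with large sets}, which plays the role of the Montgomery--Soundararajan input --- together with the Selberg--Delange method applied to the Dirichlet series generating sums of two squares, which near $s=1$ behaves essentially like $\zeta(s)^{1/2}L(s,\chi_{-4})^{1/2}$ up to holomorphic corrections, twisted by characters mod $q$. The branch-point exponent $1/2$ is exactly what produces the $x/\sqrt{\log x}$ scale, the $\sqrt{\log\log x}/\sqrt{\log x}$ secondary term with explicit coefficient $\pm\sqrt2/\pi$ (sign depending on whether $a\equiv b\pmod q$), and the descending series $\sum_{j\ge1} C_j(\log\log x)^{1/2-j}$ with remainder of size $x/(\log x(\log\log x)^{J+1/2})$.

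Third, I would separate the diagonal case $a\equiv b\pmod q$ from the off-diagonal case $a\not\equiv b\pmod q$: in the diagonal case the principal character contributes the full bias $-\tfrac{\sqrt2\,\phi(q)}{\pi}\tfrac{\sqrt{\log\log x}}{\sqrt{\log x}}$ together with the $C_j$-series with its natural sign, whereas off the diagonal each non-principal character reverses and rescales this contribution by $-1/\phi(q)$, which is exactly the bookkeeping displayed in the two formulas, the extra term $C_{a,b}/\sqrt{\log x}$ absorbing the next-order contribution of the $\chi\ne\chi_0$ sums. Finally one checks consistency by summing over all $q^2$ pairs $(a,b)$ and recovering Landau's asymptotic~\eqref{landau-intro}.

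The main obstacle --- and the reason the statement is a conjecture rather than a theorem --- is twofold. It relies on the Hardy--Littlewood conjecture for $k$-tuples of sums of two squares, which is not known, and it requires controlling the inclusion--exclusion tail uniformly in the number of intermediate integers, i.e.\ the full analogue of the Montgomery--Soundararajan estimates; our Theorem~\ref{Prop bound for D with large sets} gives only a weaker bound, enough to make the heuristic convincing and to establish the averaged statements, but not enough to pin down the error term $O(x/(\log x(\log\log x)^{J+1/2}))$ rigorously. Making the truncation of the inclusion--exclusion and the interchange of the $h$-sum with the character decomposition legitimate is where essentially all the analytic difficulty is concentrated.
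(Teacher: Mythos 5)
Your proposal follows essentially the same route as the paper's heuristic: expand the indicator of "no sum of two squares strictly between $E_n$ and $E_{n+1}$" by inclusion--exclusion, invoke the Hardy--Littlewood conjecture for sums of two squares in the Montgomery--Soundararajan normalization, truncate the tail using Theorem~\ref{Prop bound for D with large sets}, and evaluate the resulting character-decomposed averages of singular series (Theorem~\ref{Intro prop2.1}) by Selberg--Delange, with the principal character producing the $\sqrt{\log\log x}$ bias and the non-principal characters producing $C_{a,b}$. The only slight imprecision is that the half-integer singularity driving the secondary terms lives in the Mellin transform of $h\mapsto\mathfrak S(\{0,h\})$ at $s=0$ (after the substitution $H\asymp\sqrt{\log x}/K$) rather than directly in the generating series of $\mathbf 1_{\mathbf E}$ at $s=1$, but this does not affect the structure of the argument.
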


Our heuristic model leading  to Conjecture~\ref{main-conjecture-intro} follows very closely \cite{LOS}, and as such it is based 
on the Hardy--Littlewood conjectures for sums of squares, which are stated in Section~\ref{section-HL}.  Our exposition for that section, and many of the results used for the properties of the (conjectural) Hardy--Littlewood constants for sums of squares follow from \cite{FKR}. 
Fix $k \geq 1$ and $\lbrace d_1, \dots, d_k\rbrace \subseteq\Z$. We denote $ \mathfrak{S}(\lbrace d_1, \dots, d_k \rbrace)$  the Hardy-Littlewood constants for $k$-tuples of \sos defined in Section~\ref{section-HL}.
As the results of \cite{LOS}, our conjecture follows from an average of the Hardy--Littlewood constants, which is one of the main results of our paper. 
\begin{theorem}\label{Intro prop2.1}  Let $q \equiv 1 \mod 4$ be a prime. 
	For each Dirichlet character $\chi \neq \chi_0 \mod q$, let $C_{q, \chi}$ be defined by~\eqref{def-C}.
	Then, for any $J \geq 1$, and $v \neq 0 \mod q$, we have
	\begin{align*}
		\sum_{h \geq 1}  \mathfrak{S} (\lbrace 0,h \rbrace) e^{-h/H} &= 
		 H - \frac{2}{K \pi} \sqrt{\log\: H} + \sum_{j=1}^J c(j) \, (\log{H})^{1/2-j} + O \left( (\log{H})^{-1/2-J}  \right)\\
		\sum_{\substack {h \geq 1\\h \equiv 0 \mod q}}  \mathfrak{S} (\lbrace 0,h \rbrace) e^{-h/H} &= 
		\frac{H}{q} - \frac{2}{K\pi} \sqrt{\log \: H} + \sum_{j=1}^J c_0(j)\, (\log{H})^{1/2-j} + O \left( (\log{H})^{-1/2-J}  \right)\\
		\sum_{\substack {h \geq 1\\h \equiv v \mod q}}  \mathfrak{S} (\lbrace 0,h \rbrace) e^{-h/H} &= 
		\frac{H}{q} 
		+ {\frac{1}{2 K^2 \phi(q)}} \sum_{\substack{\chi \mod q \\\chi\neq\chi_0}} \bar{\chi}(v) C_{q,\chi} + \sum_{j=1}^J c_1(j)\, (\log{H})^{1/2-j} + O \left( (\log{H})^{-1/2-J}  \right).
	\end{align*}
	The constants $c(j)$ are absolute while the constants $c_0(j), c_1(j)$ depend only on $q$, they can all be explicitly computed, in particular the values for $j=1$ are given in~\eqref{Value c} and~\eqref{Values c_1 c_0}.
	Moreover they satisfy the relation 
		\begin{align} \label{relation-between-c} c_0(j) + \phi(q)c_1(j) = c(j), \hspace{0.2cm} j\geq 1. \end{align}
\end{theorem}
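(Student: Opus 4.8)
The plan is to reduce the three sums to a single analytic object --- a Dirichlet series in one complex variable --- and then apply the Selberg--Delange method (or a contour shift against an asymptotic expansion with a branch point at $s=1$). First I would recall the explicit shape of the Hardy--Littlewood constant $\mathfrak{S}(\{0,h\})$ from Section~\ref{section-HL}: it is a multiplicative-type function of $h$, essentially a product over primes $p \equiv 3 \mod 4$ (the ``ramified'' primes for sums of two squares) of local factors depending on $v_p(h)$, times a normalization. Consequently the generating Dirichlet series
\[
F(s) := \sum_{h \geq 1} \frac{\mathfrak{S}(\{0,h\})}{h^s}
\]
has an Euler product, and one checks that $F(s) = \zeta(s)\, G(s)$ where $G(s)$ is, up to an entire factor, of the form $L(s,\chi_{-4})^{1/2}$ (coming from the square-root density of sums of two squares, cf.\ the constant $K$ in~\eqref{LR-constant}). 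Thus $F(s)$ has a pole at $s=1$ together with a square-root branch point there, which is precisely what produces the main term $H$ and the secondary term $-\tfrac{2}{K\pi}\sqrt{\log H}$.

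Next I would handle the arithmetic progression conditions by orthogonality of Dirichlet characters: for $v \neq 0 \bmod q$,
\[
\sum_{\substack{h \geq 1 \\ h \equiv v \bmod q}} \mathfrak{S}(\{0,h\}) e^{-h/H}
= \frac{1}{\phi(q)} \sum_{\chi \bmod q} \overline{\chi}(v) \sum_{h \geq 1} \mathfrak{S}(\{0,h\})\chi(h) e^{-h/H},
\]
and similarly for the $h \equiv 0 \bmod q$ sum one isolates the full sum over multiples of $q$ (equivalently $\chi = \chi_0$ only, after removing the prime-to-$q$ constraint, but since $q \equiv 1 \bmod 4$ is unramified for sums of two squares the Euler factor at $q$ is simply $1 + q^{-s} + \cdots$, which is transparent). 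For the twisted series $F_\chi(s) := \sum_h \mathfrak{S}(\{0,h\})\chi(h) h^{-s}$ with $\chi \neq \chi_0$, the character kills the pole at $s=1$: $F_\chi(s) = L(s,\chi)\, G_\chi(s)$ with $G_\chi$ holomorphic and nonzero in a neighborhood of $s=1$ (again up to a harmless $L(s,\chi\chi_{-4})^{1/2}$-type factor, which for $\chi \neq \chi_0$ is holomorphic at $s=1$). Hence $F_\chi(1)$ is finite, and its value --- evaluated through the Euler product --- is exactly the constant $C_{q,\chi}$ of~\eqref{def-C}, up to the normalizing factors $1/(2K^2\phi(q))$ appearing in the statement. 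This explains why the $v \neq 0$ sum has a genuine constant secondary term rather than a $\sqrt{\log H}$ one.

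To convert the Dirichlet-series information into the stated asymptotics with the full expansion $\sum_{j=1}^J c(j)(\log H)^{1/2-j}$, I would use the standard Mellin-transform representation
\[
\sum_{h \geq 1} a_h e^{-h/H} = \frac{1}{2\pi i}\int_{(c)} \Gamma(s)\, H^s \left(\sum_h \frac{a_h}{h^s}\right) ds,
\]
shift the contour to a Hankel contour around $s=1$ (where the integrand has the branch point $\zeta(s)L(s,\chi_{-4})^{1/2} \sim (s-1)^{-1}\cdot(\text{analytic})^{1/2}$), and read off the asymptotic expansion term by term: the pole gives $H$, the square-root singularity gives $\sqrt{\log H}$ and its descending powers, and the coefficients $c(j)$, $c_0(j)$, $c_1(j)$ are residues/Hankel integrals of explicit combinations of $\Gamma$, the Euler factors, and derivatives of the ``analytic part'' of $G$ at $s=1$; the $j=1$ values can be extracted from the first two Taylor coefficients, giving~\eqref{Value c} and~\eqref{Values c_1 c_0}. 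The remainder $O((\log H)^{-1/2-J})$ comes from truncating the Hankel expansion and bounding the contour away from $s=1$ using standard zero-free-region and convexity estimates for $\zeta$ and $L(s,\chi)$. Finally, the linear relation~\eqref{relation-between-c} is not proved independently: it follows formally from the decomposition
\[
\sum_{h \geq 1} \mathfrak{S}(\{0,h\}) e^{-h/H}
= \sum_{\substack{h \geq 1 \\ h \equiv 0}} \mathfrak{S}(\{0,h\}) e^{-h/H}
+ \sum_{v \neq 0 \bmod q}\ \sum_{\substack{h \geq 1 \\ h \equiv v}} \mathfrak{S}(\{0,h\}) e^{-h/H},
\]
since the $H$-terms combine as $H/q + (q-1)H/q = H$, the $\sqrt{\log H}$ terms as $-\tfrac{2}{K\pi}\sqrt{\log H} + \sum_v 0 = -\tfrac{2}{K\pi}\sqrt{\log H}$ (using $\sum_{v\neq 0}\overline\chi(v) = -1$ for $\chi \neq \chi_0$, which also kills the constant $C_{q,\chi}$ contributions on summing over $v$), and matching the $(\log H)^{1/2-j}$ coefficients forces $c_0(j) + \phi(q) c_1(j) = c(j)$.

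The main obstacle I anticipate is \emph{not} the contour analysis, which is routine Selberg--Delange, but rather establishing the precise factorization $F_\chi(s) = L(s,\chi)^{\varepsilon}L(s,\chi\chi_{-4})^{1/2}\cdot(\text{nice})$ with explicit control of the ``nice'' Euler product at $s=1$, and in particular identifying its value at $s=1$ with the constant $C_{q,\chi}$ of~\eqref{def-C} on the nose (including all the factors of $K$, $q/\phi(q)$, and the $1/2$). This is a bookkeeping-heavy computation with the local factors of $\mathfrak{S}$ at $p=2$, at $p \equiv 1 \bmod 4$, at $p \equiv 3 \bmod 4$, and at $p = q$, and getting the normalizations consistent with the definition of $K$ and with~\eqref{def-C} is where the real care is needed; the cross-check provided by relation~\eqref{relation-between-c} is a useful consistency test on these constants.
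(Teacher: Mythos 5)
Your high-level strategy (Mellin transform, orthogonality of characters, a Hankel contour, and the Selberg--Delange expansion) is the same as the paper's, and your derivation of the relation $c_0(j)+\phi(q)c_1(j)=c(j)$ by summing over residue classes is exactly what the paper does. However, there is a genuine error in your analysis of the singularities, and as written your contour argument would not produce the stated asymptotics. By the Connors--Keating formula~\eqref{connors-keating-normalized}, the normalized coefficients $a(h)=2K^2\mathfrak{S}(\{0,h\})$ satisfy $a(p)=1+p^{-1}$ for $p\equiv 3\bmod 4$ and $a(p)=1$ for $p \equiv 1 \bmod 4$, so the correct factorization is $D(s)=\zeta(s)\,\zeta(s+1)^{1/2}M(s)$ with $M$ holomorphic and nonzero near $s=0$ and $s=1$: the half-integer branch point sits at $s=0$ (coming from the \emph{shifted} factor $\zeta(s+1)^{1/2}$), not at $s=1$ as you claim. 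The simple pole at $s=1$ alone produces the clean main term $H$; the terms $(\log H)^{1/2-j}$ come from the $s^{-3/2}$-type singularity of $D(s)\Gamma(s)$ at $s=0$ via Hankel's formula. If, as in your proposal, the pole and the branch point both sat at $s=1$, the expansion would instead contain terms of size $H(\log H)^{1/2-j}$, which contradicts the statement.

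The same misplacement propagates to the twisted sums. For $\chi\neq\chi_0$ the constant $C_{q,\chi}$ is $D_\chi(0)=L(0,\chi)L(1,\chi)^{1/2}M_\chi(0)$, picked up by the simple pole of $\Gamma(s)$ at $s=0$ --- not the value $F_\chi(1)$ as you assert (an evaluation at $s=1$ would contribute at scale $H$, not $O(1)$). Moreover, the reason the classes $v\not\equiv 0$ carry no $\sqrt{\log H}$ term while the class $v\equiv 0$ does is \emph{not} that ``the character kills the pole at $s=1$''; it is that $L(s,\chi_0)=(1-q^{-s})\zeta(s)$ vanishes at $s=0$, cancelling the pole of $\Gamma(s)$ there and reducing the singularity of $D_{\chi_0}(s)\Gamma(s)$ from order $s^{-3/2}$ to $s^{-1/2}$, so that $S(H,\chi_0)$ starts at $(\log H)^{-1/2}$; the full $-\tfrac{2}{K\pi}\sqrt{\log H}$ is then inherited entirely by the class $v\equiv 0$ upon subtracting $\sum_{v\neq 0}S(q,v,H)$ from $S(H)$. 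You would need to redo the local computation at $s=0$ (including the explicit Taylor data of $\zeta(s)M(s)\Gamma(s)s^{3/2}$ there) to obtain the values~\eqref{Value c} and~\eqref{Values c_1 c_0}.
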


By using Theorem~\ref{Prop bound for D with large sets}, which is the analogue of the work \cite{MS} for sums of two squares, we need only to compute a weighted average of the constants $\mathfrak{S}( \left\{ 0, h \right\})$ associated to 2-tuples, 
while \cite{FKR} compute a more general average of the constants $\mathfrak{S}( \left\{ h_1, \dots, h_k \right\} )$
associated to $k$-tuples. 
Since the Hardy--Littlewood constants  $\mathfrak{S}( \left\{ 0, h \right\} )$ can be described explicitly with  a simple formula from the  work of Connors and Keating \cite{CK}, this allows us to get a very precise result exhibiting a small secondary term which gives the bias. A similar average of the constants  $\mathfrak{S}( \left\{ 0, h \right\})$ was computed by 
Smilansky \cite{smilansky}, and we also use some of his results. Moreover, the techniques developed in this paper yield a more precise form of the averages considered in  \cite{smilansky} and \cite{FKR}.

\begin{proposition}\label{intro-FKR-improved}
	Assume the Generalized Riemann Hypothesis.
	For $\varepsilon >0$ and  $k\geq 2$,
	we have
	\begin{align*}
		\sum_{\substack{1\leq d_1, \dots, d_k \leq H \\ \text{distinct}}} \mathfrak{S}(\lbrace d_1, \dots, d_k \rbrace)
		= H^k +  \frac{{k(k-1) H^{k-1}}}{\pi K^2}   \int_{1/2+\varepsilon}^1 \frac{F'(\sigma) H^{\sigma -1} + F(\sigma) H^{\sigma-1} \log{H}}{|\sigma-1|^{1/2}} d\sigma  + O_{k,\varepsilon}(H^{k - \frac32 + \varepsilon}),
	\end{align*}
	where $F(s)=\zeta(s-1) M(s-1) \left[ (s-1) \zeta(s) \right]^{1/2} s^{-1}$, with $M(s)$ as defined by~\eqref{def-M}.
\end{proposition}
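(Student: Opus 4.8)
The plan is to reduce the $k$-tuple average to the $2$-tuple average and then to evaluate the latter precisely by the Selberg--Delange/contour method, using the explicit Connors--Keating formula for $\mathfrak{S}(\{0,h\})$ and GRH to move the contour past the line $\re s=1$. First I would use the local description of the Hardy--Littlewood constants from Section~\ref{section-HL}: writing $\mathfrak{S}(\{d_1,\dots,d_k\})=\prod_p\omega_p$, the local factor $\omega_p$ coincides with $\prod_i\omega_p(\{d_i\})$ unless at least two of the $d_i$ lie in a common residue class modulo a power of $p$, and $\omega_p-1$ decays rapidly both in $p$ and in the number of ``interacting'' indices. Expanding the product over primes and sorting the resulting terms by which subsets of the variables interact --- a M\"obius-type inclusion--exclusion over primes dividing the differences $d_i-d_j$ --- one obtains a main term $\#\{\text{distinct }\mathbf{d}\le H\}=H^k+O(H^{k-1})$, a sum of $\binom{k}{2}$ pairwise contributions, each equal to $(H-2)\cdots(H-k+1)$ times the single $2$-variable sum $\sum_{1\le d_1\ne d_2\le H}(\mathfrak{S}(\{d_1,d_2\})-1)$, and a remainder collecting the correlations among three or more variables. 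Following the scheme of \cite{FKR}, the remainder is $O_{k,\varepsilon}(H^{k-3/2+\varepsilon})$ under GRH. Since $\mathfrak{S}(\{d_1,d_2\})$ depends only on $h=d_1-d_2$ and is even in $h$, everything reduces to an asymptotic for
\[
S(H):=\sum_{1\le |h|\le H-1}(H-|h|)\bigl(\mathfrak{S}(\{0,h\})-1\bigr)=2\sum_{h=1}^{H-1}(H-h)\bigl(\mathfrak{S}(\{0,h\})-1\bigr);
\]
the prefactor $k(k-1)=2\binom{k}{2}$ then records the two signs of $h$ together with the $\binom{k}{2}$ choices of pair, while the linear-in-$H$ term of $S(H)$, after multiplication by $\binom{k}{2}H^{k-2}$, cancels the $O(H^{k-1})$ left over from the distinctness count and restores the clean main term $H^k$.

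Next I would evaluate $S(H)$ by the same contour argument used for Theorem~\ref{Intro prop2.1}, but with the sharp weight $(H-t)_+$ replacing $e^{-t/H}$. By the Connors--Keating formula, $\mathfrak{S}(\{0,h\})$ is an explicit product of local factors over $p\equiv 3\pmod 4$ depending only on the $p$-adic valuations of $h$; the associated generating Dirichlet series is --- up to an Euler product that is holomorphic and bounded in $\re s>\tfrac12$ --- built from $\zeta(s-1)$, from $M(s-1)$, and from the square-root factor $[(s-1)\zeta(s)]^{1/2}$, which is exactly the data packaged by $F$ and which encodes, via Landau's theorem, the $(\log)^{-1/2}$ density of sums of two squares. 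Writing $S(H)$ as $\frac{1}{2\pi i}\int_{(c)}\frac{H^{s+1}}{s(s+1)}\,(\text{this series})\,ds$ for suitable $c$ and pushing the contour to the left, the pole from $\zeta$ furnishes the main and linear terms discussed above, and the square-root factor forces a branch cut which, under GRH, may be taken along the segment $[\tfrac12+\varepsilon,1]$ --- precisely because $\zeta(s)\ne0$, hence $(s-1)\zeta(s)\ne0$, for $\re s>\tfrac12$. Wrapping the contour around this cut produces exactly
\[
S(H)=\frac{2H}{\pi K^2}\int_{1/2+\varepsilon}^1\frac{F'(\sigma)H^{\sigma-1}+F(\sigma)H^{\sigma-1}\log H}{|\sigma-1|^{1/2}}\,d\sigma+O_\varepsilon(H^{1/2+\varepsilon}),
\]
the numerator being $\tfrac{d}{d\sigma}[F(\sigma)H^{\sigma-1}]$ (the $\log H$ and $F'$ arising from the Mellin kernel and from differentiating across the cut), and the leftover vertical integral on $\re s=\tfrac12+\varepsilon$ being $O_\varepsilon(H^{1/2+\varepsilon})$ by convexity-type bounds for $\zeta$ and $M$ valid under GRH. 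Multiplying by $\binom{k}{2}H^{k-2}$ and combining with the reduction step yields the proposition.

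The hard part will be the rigorous handling of the multivalued factor $[(s-1)\zeta(s)]^{1/2}$ (and of $M(s-1)$), exactly as in \cite{FKR} and \cite{smilansky}: one must fix a single-valued branch on a neighbourhood of $\{\re s\ge\tfrac12+\varepsilon\}$ slit along $[\tfrac12+\varepsilon,1]$, estimate this branch and $F'$ uniformly near the cut, and justify that the contour may be legitimately deformed past the pole and branch point at $s=1$ down to $\re s=\tfrac12+\varepsilon$ --- it is here that GRH is indispensable, both to confine the cut and to bound the remaining integral. A secondary technical burden is the bookkeeping in the reduction step: one has to verify that the correlations among three or more variables genuinely save a power $H^{3/2-\varepsilon}$ over the trivial bound (rather than only $H^{1-\varepsilon}$), which again rests on GRH-strength cancellation in the auxiliary sums of local factors over the free variables together with the rapid decay of $\omega_p-1$, and that the assorted $H^{k-1}$ contributions really do conspire to leave only $H^k$ as the main term.
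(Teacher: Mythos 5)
Your overall architecture coincides with the paper's: reduce to the pair sum by inclusion--exclusion, evaluate the pair sum via a Perron/Mellin integral for the Dirichlet series attached to the Connors--Keating formula, integrate by parts to convert $(s-1)^{-3/2}$ into $(s-1)^{-1/2}$ times $\tfrac{d}{ds}\left[F(s)H^{s}\right]$, and wrap the contour around the branch cut $[\tfrac{1}{2}+\varepsilon,1]$, which stays on the real segment precisely because of RH. This is exactly Proposition~\ref{prop-lucia} and its insertion into the $k$-tuple count. One slip in the pair step: your displayed formula for $S(H)$ omits the term $+H$ coming from $\sum_{d_1\neq d_2}1=H^2-H$; your prose correctly invokes this linear term to cancel the $-\tfrac{k(k-1)}{2}H^{k-1}$ arising from the falling factorial $H!/(H-k)!$, so the display should read $S(H)=H+\tfrac{2}{\pi K^2}\int(\cdots)+O_\varepsilon(H^{1/2+\varepsilon})$.

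The genuine gap is in the reduction step. To discard the correlations among three or more variables you need, for each $3\le r\le k$, the bound $\sum_{\mathcal{D}\subseteq[1,H],\,|\mathcal{D}|=r}\mathfrak{S}_0(\mathcal{D})\ll_{r,\varepsilon}H^{r/2+\varepsilon}$, which after multiplying by the $O(H^{k-r})$ choices of the remaining free variables gives $O(H^{k-r/2+\varepsilon})\le O(H^{k-3/2+\varepsilon})$. You attribute this to ``the scheme of \cite{FKR}'' plus GRH-strength cancellation, but the averaging result of \cite{FKR} (their Proposition~1.3) only saves $H^{2/3+o(1)}$ over the trivial bound, which is not even enough to isolate the $H^{k-1}$ secondary term, let alone reach $H^{k-3/2+\varepsilon}$; and GRH plays no role in this part of the argument. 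The required estimate is Theorem~\ref{Prop bound for D with large sets}, an unconditional analogue of Montgomery--Soundararajan's Theorem~2 proved in Section~\ref{Section proof prop bound for D with large sets} by expanding the singular series into Ramanujan-sum--type exponential sums (Lemma~\ref{lemma-HL}), truncating their support (Lemma~\ref{Eq HL approx S-lemma}), and applying the Montgomery--Vaughan basic inequality; it is one of the main results of the paper rather than a routine technicality. Without it, or an equivalent square-root--cancellation bound on the centered singular series $\mathfrak{S}_0$, your remainder estimate does not close.
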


Finally, the heuristic leading to Conjecture~\ref{main-conjecture-intro} can be generalized to predict an asymptotic for $r$ successive \sos in arithmetic progressions. 

\begin{conjecture} \label{general-conjecture-intro} Fix a prime $q \equiv 1 \mod 4$, $r \geq 2$ and  ${\bf a} = (a_1, \dots, a_r) \in \N^r$. Let
$$
N(x;q,{\bf a})  := \# \{ E_n \leq x \;:\; E_{n+i-1} \equiv a_i \mod q \}.
$$ We have
	\begin{align*}
		N(x;q,\mathbf{a})& = \frac{x}{q^{r}}\frac{K}{\sqrt{\log x}}\Bigg( 1 + C_{-1}(\mathbf{a})\frac{(\log\log x)^{\frac12}}{(\log x)^{\frac12}} + \frac{C_{0}(\mathbf{a})}{(\log x)^{\frac12}} + \frac{C_{1}(\mathbf{a})}{(\log\log x)^{\frac12}(\log x)^{\frac12}} \Bigg) \\ & \quad \quad+ O\big(x(\log\log x)^{-\frac32}(\log x)^{-1} \big),
	\end{align*}
where
\begin{align*}
	C_{-1}(\mathbf{a}) &=  \frac{q\sqrt{2} }{\pi}\sum_{i=1}^{r-1} \big(\tfrac{1}{q} -\delta(a_{i+1}\equiv a_i)  \big)\\
	C_{0}(\mathbf{a}) &=  \sum_{\substack{ 1\leq i \leq r-1 \\ a_i \not\equiv a_{i+1} \pmod{q} }}C_{a_i,a_{i+1}} \\
	C_{1}(\mathbf{a}) &=  
-\frac{qC_1}{\phi(q)}\sum_{i=1}^{r-1} \big(\tfrac{1}{q} -\delta(a_{i+1}\equiv a_i)  \big)
	+ \frac{q\sqrt{2}}{\sqrt{\pi}}  \sum_{k=1}^{r-2}  \sum_{i=1}^{r-1-k} \frac{\frac{1}{q} -\delta(a_{i+k+1}\equiv a_i)}{k}  ,
\end{align*}
and the constants $C_{a_i,a_{i+1}}$ are defined by~\eqref{D_a,b-intro}, and the value of $C_1$ is given in 
Conjecture~\ref{main-conjecture-J1}. \end{conjecture}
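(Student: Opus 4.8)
The plan is to follow the heuristic of Lemke Oliver and Soundararajan \cite{LOS} almost verbatim, with two substitutions dictated by Landau's theorem \eqref{landau-intro}: the local density of primes $1/\log x$ is replaced by the local density $\lambda(x) := K/\sqrt{\log x}$ of sums of two squares, and the Hardy--Littlewood constants for prime tuples are replaced by the constants $\mathfrak{S}(\{d_1,\dots,d_k\})$ of Section~\ref{section-HL}. Writing $E_n = N$ and $E_{n+i-1} = N + h_i$ with $0 = h_1 < h_2 < \cdots < h_r$, the event counted by $N(x;q,\mathbf{a})$ says that each $N + h_i$ is a sum of two squares in the class $a_i \pmod q$ and that no other integer in $(N, N+h_r)$ lies in $\mathbf{E}$. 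First I would open the last condition by inclusion--exclusion over the integers in the gap, obtaining
\[
N(x;q,\mathbf{a}) = \sum_{N \le x}\; \sum_{0 = h_1 < \cdots < h_r}\; \sum_{\mathcal{T}} (-1)^{|\mathcal{T}|} \prod_{h \in \mathcal{H}} \mathbf{1}_{\mathbf{E}}(N+h)\; \prod_{i=1}^{r} \mathbf{1}\big(N + h_i \equiv a_i \pmod q\big),
\]
where $\mathcal{H} = \{h_1,\dots,h_r\}\cup\mathcal{T}$ and $\mathcal{T}$ runs over subsets of $\{1,\dots,h_r-1\}\setminus\{h_2,\dots,h_{r-1}\}$. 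On average over $N\le x$ in a fixed residue class modulo $q$, the inner product is then replaced, via the Hardy--Littlewood conjecture for $\mathbf{E}$ (Section~\ref{section-HL}), by $\mathfrak{S}(\mathcal{H})\,\lambda(x)^{|\mathcal{H}|}$ times the appropriate local factor at $q$, and the sum over $N$ contributes $x$.

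Next I would carry out the sums over the gap vector and over $\mathcal{T}$, tracking orders in $\lambda(x)^{-1}$. After reorganization the contributions decouple --- to the precision required --- into pairwise interactions between positions $i$ and $i+\ell$, each governed by an average of the two-point constant $\mathfrak{S}(\{0,h\})$ over $h$ in the residue class $a_{i+\ell} - a_i \pmod q$, with an exponential cutoff of scale $H \asymp \lambda(x)^{-1}$, i.e.\ $\log H \sim \tfrac12\log\log x$. These averages are exactly the content of Theorem~\ref{Intro prop2.1}. For consecutive positions ($\ell = 1$) one uses $\sum_{h\equiv v}\mathfrak{S}(\{0,h\})e^{-h/H}$ directly: the principal term $H/q$ produces the leading $1$; the $\sqrt{\log H}$ secondary term, whose coefficient depends on whether $v\equiv 0$, produces the $(\log\log x)^{1/2}/(\log x)^{1/2}$ bias encoded in $C_{-1}(\mathbf{a})$; the $O(1)$ character-sum term $C_{q,\chi}$ produces $C_{a_i,a_{i+1}}$; and the $(\log H)^{-1/2}$ term produces the first, $C_1$-dependent, part of $C_1(\mathbf{a})$. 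For positions $\ell\ge 2$ apart, the $\ell-1$ intermediate sums of two squares are summed out, and the convolution of the corresponding cutoffs replaces the leading secondary term by the same quantity with a weight $1/(\ell-1)$; this is the origin of the double sum $\sum_{k\ge 1}\sum_i \tfrac{1/q-\delta(a_{i+k+1}\equiv a_i)}{k}$ in $C_1(\mathbf{a})$. Throughout, the error is controlled by Theorem~\ref{Prop bound for D with large sets}, the analogue for sums of two squares of the Montgomery--Soundararajan moment estimates \cite{MS}, exactly as \cite{LOS} invoke \cite{MS}: it shows that the $\ell$-point correlations with $\ell\ge 3$ in the inclusion--exclusion, together with the tail coming from gaps $h_r$ much larger than $\lambda(x)^{-1}$, contribute only $O\big(x(\log\log x)^{-3/2}(\log x)^{-1}\big)$.

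Finally I would specialize and collect constants. Taking $r=2$ must reproduce Conjecture~\ref{main-conjecture-intro}, which fixes all normalizations and, in particular, pins down $C_1$ as the $(\log H)^{-1/2}$-coefficient built from $c(1),c_0(1),c_1(1)$ in Theorem~\ref{Intro prop2.1} (see Conjecture~\ref{main-conjecture-J1}). For general $r$ the three coefficients are then read off by summing the pairwise contributions: $C_{-1}(\mathbf{a})$ and the first part of $C_1(\mathbf{a})$ come from summing the consecutive-pair terms over $1\le i\le r-1$, while the second part of $C_1(\mathbf{a})$ comes from summing the distance-$(k+1)$ terms over $k\ge 1$ and $1\le i\le r-1-k$; the indicator $\delta(a_{i+1}\equiv a_i)$, respectively $\delta(a_{i+k+1}\equiv a_i)$, records whether the associated two-point residue class is $\equiv 0 \pmod q$, and $C_0(\mathbf{a})$ is the corresponding sum of the $C_{a_i,a_{i+1}}$.

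The main obstacle is the error analysis in the inclusion--exclusion: one must show that truncating the alternating sum over $\mathcal{T}$ and the sum over the gap $h_r$ leaves an error genuinely smaller than the $(\log\log x)^{-1/2}$-order term being extracted. This is precisely what Theorem~\ref{Prop bound for D with large sets} is for, and, as in \cite{LOS,MS}, it amounts to bounding high moments of the counting function of $\mathbf{E}$ over intervals of length about $\lambda(x)^{-1}$; the power of $\log$ it is permitted to lose is what limits the number of terms that can be kept. A secondary, purely combinatorial, difficulty is verifying that the inclusion--exclusion cancellations genuinely reduce the $\ell\ge 3$ interactions to the error term and leave only the pairwise terms at the orders $(\log\log x)^{1/2}$, $1$, and $(\log\log x)^{-1/2}$ that make up $C_{-1}$, $C_0$, $C_1$, with the combinatorial coefficients depending on $r$ exactly as stated.
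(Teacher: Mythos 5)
Your overall blueprint (inclusion--exclusion over the gap, Hardy--Littlewood for $\mathbf{E}$, discard higher correlations, reduce to pairwise averages with an exponential cutoff at scale $H\asymp\sqrt{\log x}/K$) is the paper's route, and your accounting of the consecutive-pair contributions to $C_{-1}$, $C_0$ and the first part of $C_1$ is correct. But two steps are off as written. First, the truncation of the alternating sum over $\mathcal{T}$ cannot be justified with the uncentered constants $\mathfrak{S}(\mathcal{H})$ that appear in your displayed identity: Theorem~\ref{Prop bound for D with large sets} bounds $\sum_{\mathcal{H}}\mathfrak{S}_0(\mathcal{H})$, and the point of the centering $\mathbf{1}_{\mathbf{E}}=K/\sqrt{\log n}+\widetilde{\mathbf{1}}_{\mathbf{E}}$ (equivalently of passing to $\mathfrak{S}_0$ via~\eqref{altdef-S0q}) is that the uncentered $\ell$-point terms with $\ell\ge 3$ are \emph{not} negligible --- they assemble into the factor $\alpha(x)^{h}$ of the main term. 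You must recentre before invoking Theorem~\ref{Prop bound for D with large sets}, exactly as in Section~\ref{section-heuristic-1}.

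Second, and more importantly for $r\ge 3$: the averages governing positions $\ell\ge 2$ apart are \emph{not} ``exactly the content of Theorem~\ref{Intro prop2.1}''. Summing out the $\ell-1$ intermediate gaps with total $h$ fixed produces a weight $\asymp h^{\ell-2}/((\ell-2)!\,q^{\ell-2})$, so what is needed is the asymptotic of the weighted sums $S^{(k)}(q,v,H)=\sum_{h\equiv v}\mathfrak{S}(\{0,h\})\,h^{k}e^{-h/H}$ for $k\ge 1$. This is a new computation (Proposition~\ref{Lem Sk} in Section~\ref{section-general-conj}): because the pole of the shifted factor $\zeta(s+1)$ no longer coincides with the pole of $\Gamma(s)$, the bias term drops from $(\log H)^{1/2}$ (the $k=0$ case) to $(k-1)!\,H^{k}(\log H)^{-1/2}$ when $v\equiv 0$, and the constant character-sum term disappears entirely when $v\not\equiv 0$. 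That drop by a full factor of $\log H$ is precisely why the non-consecutive pairs land in $C_1(\mathbf{a})$ rather than contaminating $C_{-1}(\mathbf{a})$ or $C_0(\mathbf{a})$, and the ratio $(k-1)!/(k-1)!\cdot 1/(k-1)$ of the Gamma factors is where the $1/k$ weights (your $1/(\ell-1)$) come from. Your description --- ``the convolution of the cutoffs replaces the leading secondary term by the same quantity with a weight $1/(\ell-1)$'' --- gets the weight right but misstates both the order of magnitude and the constant (the paper gets $\frac{q\sqrt{2}}{\sqrt{\pi}}$ here versus $\frac{q\sqrt{2}}{\pi}$ in $C_{-1}$); without carrying out the contour/Hankel analysis for $S^{(k)}$ you cannot derive, or even make plausible, the stated form of $C_1(\mathbf{a})$.
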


The structure of the paper is as follows: we review in Section~\ref{section-SOS-AP} the basic properties of sums of two squares, including the secondary terms for the counting function of \sos in arithmetic progressions, which surprisingly we did not find in the literature. We discuss the Hardy--Littlewood conjectures for \sos in Section~\ref{section-HL}.
 We present the heuristic model leading to Conjecture~\ref{main-conjecture-intro} in Section~\ref{section-heuristic-1},
 following Lemke Oliver and Soundararajan \cite{LOS}; in particular, we explain how the heuristic reduces Conjecture~\ref{main-conjecture-intro} to an average of Hardy--Littlewood constants (Theorem~\ref{Intro prop2.1}), which we prove in  Section~\ref{proof-prop-section} using the Selberg--Delange method.
 
 We prove Theorem~\ref{Prop bound for D with large sets}, which is an analogue of the main result of Montgomery and Soundararajan \cite{MS} mentioned above used to justify our heuristic, in Section~\ref{Section proof prop bound for D with large sets}. We then use this result in Section~\ref{section-improved-averages} to prove Proposition~\ref{intro-FKR-improved}
which improve the average results of \cite{FKR} and \cite{smilansky}. 
Finally, we explain how to deduce  Conjecture~\ref{general-conjecture-intro} from our heuristic in Section~\ref{section-general-conj},  and 
 we present numerical data in Section~\ref{section-numerical}.

\section*{Acknowledgements}
We thank the organizers of the MOBIUS ANT who invited us to present a preliminary version of the results contained in this paper in a friendly environment with subsequent discussions that led to several improvements in our exposition.
We are particularly grateful to Dimitris Koukoulopoulos for constructive discussions, and for pointing out some relevant material from
his book \cite{koukoulopoulos}, and the post of Lucia on Mathoverflow \cite{lucia}.

Our results and conjectures are supported by several numerical data, that were computed using SageMath~\cite{Sage}, PARI/GP~\cite{PARI2} and Mathematica \cite{Mathematica}.
The authors thank the Centre de Recherches Math\'{e}matiques (CRM) in the Universit\'{e} de Montr\'{e}al for offering their clusters for some of the numerical computations.

The first author was supported by  a NSERC Discovery Grant and a FQRNT Team Grant; the second author was supported by the grant KAW 2019.0517 from the Knut and Alice Wallenberg Foundation; the fourth author was supported by a Concordia CUSRA.

\section{Sums of two squares in arithmetic progressions}
\label{section-SOS-AP}

	By a classical result of Landau \cite{Lau}, we have
	\begin{equation} \label{Landau}
	\sum_{\substack{n \leq x}} \mathbf{1}_{\mathbf{E}}(n) \sim {K}\frac{x}{\sqrt{\log x}},
	\end{equation}
	where \begin{align} \label{LR-constant}
	K = \frac{1}{\sqrt{2}}  \prod_{p\equiv 3 \pmod{4}} (1-p^{-2}) ^{-\frac12} \end{align} is the Landau--Ramanujan constant.
	We remark that, unlike the prime number theorem, the asymptotic above gives only the main term, and there is no simple integral similar to ${\text{li}}(x)$ which approximates well the number of \sos up to $x$. This is caused by the fact that the generating series for \sos has an essential singularity at $s=1$, its contribution is evaluated by the Selberg--Delange method which gives~\eqref{Landau}. It is possible to iterate the Selberg--Delange method to write, for any $J \geq 1$,
	\begin{equation} \label{refined-Landau}
	\sum_{\substack{n \leq x}} \mathbf{1}_{\mathbf{E}}(n) = {K}x \left( \sum_{j=0}^J \frac{c_j}{(\log x)^{1/2+j}}  \right) +O \left( x(\log{x})^{-3/2-J} \right).
	\end{equation}
	Explicit values for the constants $c_j$ can be found in the literature for $c_0 = 1$ \cite{Lau}, $c_1 = 0.581948659\cdots$ \cite{Sta28, Sta29, Sha} and up to $c_{15}$ in \cite{EG18}. 
	It is possible to get an expression for the number of \sos smaller than $x$ with a better error term,
	but one looses the simplicity of the formula above as a sum of descending powers of $\log$. We state this result in the next theorem, that we will prove in Section~\ref{section-improved-averages}. A similar expression for the number of \sos exhibiting squareroot cancellation under the GRH can be found in \cite[Theorem B.1]{GR20}, inspired by the work of \cite{Ram76}.

	\begin{theorem}\label{thm-S0S-integral} Let $0 < \varepsilon < 1/2$. There exists a constant $c > 0$ such that
		\begin{align*}
			\sum_{n \leq x} \mathbf{1}_{\mathbf{E}}(n) = \frac{1}{\pi} \int_{1/2+\varepsilon}^1  G(\sigma) \frac{x^\sigma}{\sigma |\sigma -1|^{1/2}} \; d \sigma 
			+ O \left( x \exp{ \left( - c \sqrt{\log{x}} \right)} \right),
		\end{align*}
		where $G(s) = (\zeta(s)(s-1))^{1/2} L(s, \chi_4)^{1/2} (1-2^{-s})^{-1/2} \prod_{p \equiv 3 \mod 4} \left( 1 - p^{-2s} \right)^{{-1/2}}$ and $\chi_4$ is the non-trivial Dirichlet character modulo 4, so
		$G(s)$ is an analytic function for $\re(s)>1/2 + \varepsilon.$
		If we assume the Riemann Hypothesis for $\zeta(s)$ and $L(s, \chi_4)$, we can replace the error term by $O \left( x^{1/2+\varepsilon} \right).$
	\end{theorem}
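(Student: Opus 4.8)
The plan is to recover the counting function as a contour integral of the Dirichlet series of $\mathbf{1}_{\mathbf{E}}$ and then deform the contour around the branch point of that series at $s=1$; this is the Selberg--Delange circle of ideas, except that one keeps the contour integral itself as the main term rather than expanding it into the descending powers of $\log x$ of~\eqref{refined-Landau}. Since $n\in\mathbf{E}$ exactly when every prime $p\equiv 3\pmod 4$ divides $n$ to an even power, $\mathbf{1}_{\mathbf{E}}$ is multiplicative, and comparing its Euler product with those of $\zeta$ and $L(\cdot,\chi_4)$ shows that, for $\re s>1$,
\[
D(s):=\sum_{n\ge 1}\frac{\mathbf{1}_{\mathbf{E}}(n)}{n^s}=\bigl(\zeta(s)L(s,\chi_4)\bigr)^{1/2}(1-2^{-s})^{-1/2}\prod_{p\equiv 3\pmod 4}(1-p^{-2s})^{-1/2}=\frac{G(s)}{(s-1)^{1/2}},
\]
with $G$ the function of the statement. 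The product over $p\equiv 3\pmod 4$ is absolutely convergent for $\re s>1/2$, $\zeta(s)(s-1)$ and $L(s,\chi_4)$ are entire, and neither $\zeta$ nor $L(\cdot,\chi_4)$ vanishes on $(1/2,1)$ (recall $L(\sigma,\chi_4)>0$ for $\sigma>0$); fixing the branch of the square roots positive on $(1,\infty)$, $G$ therefore continues analytically to a neighbourhood of $[1/2+\varepsilon,1]$, and under the Riemann Hypothesis for $\zeta$ and $L(\cdot,\chi_4)$ to all of $\re s>1/2+\varepsilon$, where in addition $D(s)\ll_\varepsilon|\im s|^{\varepsilon}$ by the Lindel\"of-type bounds for $\zeta^{1/2}$ and $L(\cdot,\chi_4)^{1/2}$.

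Next I would apply the truncated Perron formula at $c=1+1/\log x$ to get $\sum_{n\le x}\mathbf{1}_{\mathbf{E}}(n)=\frac1{2\pi i}\int_{c-iT}^{c+iT}D(s)x^{s}s^{-1}\,ds+O\bigl(x(\log x)/T\bigr)$, and deform the segment to a truncated Hankel contour: it descends (essentially) the line $\re s=1/2+\varepsilon$, hugs the branch cut $[1/2+\varepsilon,1]$ of $(s-1)^{-1/2}$ (a circle of radius $\delta\to 0$ about $s=1$ contributes $O(x\delta^{1/2})\to 0$), runs back up, and is closed by horizontal segments at heights $\pm T$. No singularity is enclosed, so Cauchy's theorem applies; and since $(s-1)^{-1/2}$ jumps by $\mp 2i|\sigma-1|^{-1/2}$ across the cut, the two horizontal portions hugging the cut (with the factor $\tfrac1{2\pi i}$) contribute exactly the asserted main term $\tfrac1\pi\int_{1/2+\varepsilon}^{1}G(\sigma)\frac{x^{\sigma}}{\sigma|\sigma-1|^{1/2}}\,d\sigma$. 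A useful check is that this integral has size $\sim\frac{G(1)}{\sqrt\pi}\frac{x}{\sqrt{\log x}}=K\frac{x}{\sqrt{\log x}}$, in agreement with~\eqref{Landau}.

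It then remains to bound the remaining pieces of the contour. Under RH I may keep the vertical parts on $\re s=1/2+\varepsilon$ all the way up to height $T$: with $D(s)\ll|\im s|^{\varepsilon}$ they contribute $\ll x^{1/2+\varepsilon}T^{\varepsilon}$, the horizontal parts at $\pm T$ contribute $\ll x^{1+o(1)}T^{\varepsilon-1}$, the Perron error is $\ll x(\log x)/T$, and $T=x$ gives the total error $O(x^{1/2+\varepsilon})$ (after renaming $\varepsilon$). Unconditionally, the vertical parts can only reach $\re s=1/2+\varepsilon$ near the real axis; for larger $|\im s|$ I would follow instead the boundary $\re s\ge 1-c_0/\log(|\im s|+2)$ of the classical zero-free region of $\zeta$ and $L(\cdot,\chi_4)$, where only the weak bounds $\zeta(s),L(s,\chi_4)\ll\log|\im s|$ hold; but the factor $(s-1)^{1/2}$ in $G$ cancels the $|\im s|^{-1/2}$ from $(s-1)^{-1/2}$, leaving $D(s)\ll\log|\im s|$, so these pieces, the horizontal segments at $\pm T$, and the Perron error are all $\ll x^{1-c_0/\log T}\log T+x(\log x)/T$; the standard choice $T=\exp(\sqrt{c_0\log x})$ makes this $O(x\exp(-c\sqrt{\log x}))$. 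If near the real axis the contour is only brought down to $\re s=\sigma_0$ for a fixed $\sigma_0\in(1/2,1)$ inside the zero-free region, one obtains the same statement with $\int_{\sigma_0}^{1}$, and extending the lower limit to $1/2+\varepsilon$ changes the main term only by $O(x^{\sigma_0})=O(x\exp(-c\sqrt{\log x}))$.

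The main obstacle throughout is the half-integral branch point of $D$ at $s=1$: there is no residue to extract, so the Hankel contour and a consistent choice of branch for $(s-1)^{-1/2}$ and for the square roots of $\zeta$ and $L(\cdot,\chi_4)$ are essential, and one must thread both the cut and the deforming contour through regions where $\zeta$ and $L(\cdot,\chi_4)$ do not vanish. On the real segment $[1/2+\varepsilon,1]$ this is elementary; off it one relies on the classical zero-free region, which is also what forces the $\exp(-c\sqrt{\log x})$ shape of the unconditional error, and which can be replaced by the critical line under RH to gain the power saving.
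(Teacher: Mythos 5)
Your proposal is correct and follows essentially the same route as the paper: Perron's formula, deformation to a keyhole/Hankel contour slit along $[1/2+\varepsilon,1]$, the jump of $(s-1)^{-1/2}$ across the cut producing the integral main term, Lindel\"of-type bounds under RH, and the classical zero-free region unconditionally. The only (immaterial) difference is that unconditionally you follow the curved boundary $\re s\ge 1-c_0/\log(|\im s|+2)$ while the paper simply uses the fixed abscissa $1-c/\sqrt{\log x}$ before extending the lower limit of integration to $1/2+\varepsilon$; both yield the same $O(x\exp(-c\sqrt{\log x}))$ error.
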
	
	
	Even if it is more precise (see Table~\ref{table:S0S-integral}), this formula gives somehow less insight on the behaviour of the secondary terms and we come back to the Selberg--Delange method 
	when separating the \sos into congruence classes.
	
	\begin{table}[htb!]
		\captionsetup{width=.58\linewidth}
		\begin{tabular}{c|rrrr|ccc}
			\hline 
			\makecell{$x$} & \makecell{Actual} & \makecell{\eqref{Landau}} & \makecell{\eqref{refined-Landau}} & \makecell{Theorem~\ref{thm-S0S-integral}} & 
			\makecell{\eqref{Landau}} & \makecell{\eqref{refined-Landau}} & \makecell{Theorem~\ref{thm-S0S-integral}} \\
			\hline
			\hline
			$10^9$  & 173 229 059 & 167 877 068 & 172 591 375 & 173 226 354 & 1.0319 & 1.0037 &  1.00001562\\
			$10^{10}$  & 1 637 624 157 & 1 592 621 708 & 1 632 873 166 & 1 637 616 416 &  1.0283 & 1.0029 & 1.00000473\\
			$10^{11}$ & 15 570 512 745 & 15 185 052 177 & 15 533 945 443 &  15 570 488 969 & 1.0254 & 1.0024 & 1.00000153\\
			$10^{12}$  & 148 736 628 859 & 145 385 805 874 & 148 447 838 016 &  148 736 563 568 & 1.0230 & 1.0019 & 1.00000044\\
			\hline
		\end{tabular}
		\caption{Comparison of the experimental data for the number of \sos up to $x$ with the asymptotic of~\eqref{Landau},  the asymptotic of~\eqref{refined-Landau} with the first two terms and  the integral of Theorem~\ref{thm-S0S-integral}. The three rightmost columns are the percentage errors. Notice that the error for the integral approximation of Theorem~\ref{thm-S0S-integral} agrees with the error term under the Riemann Hypothesis.}
		\label{table:S0S-integral}
	\end{table}
	
	Let us now consider the distribution of  sums of two squares in arithmetic progression modulo $q$. For $a\in \N$, following the notations introduced in Section~\ref{section intro}, let us denote $$
	N(x;q,a)  := \# \{ E_n \leq x \;:\; E_{n} \equiv a \mod q \}.$$ The case $q\equiv 1 \pmod{4}$ is a prime is particularly simple, and we restrict to that case.
We refer the reader to \cite[Satz~1]{Rie} (see also \cite[Lemma~2.1]{BW}) for the general case.
	\begin{theorem}\cite[Satz~1]{Rie} \label{thm-SOS-AP-1}
	Let $q\equiv 1 \pmod{4}$ be a prime. Then, for $a\in \Z/q\Z$,
	\begin{equation*}
	N(x; q,a) := \sum_{\substack{n \leq x \\ n \equiv a \pmod{q}}} \mathbf{1}_{\mathbf{E}}(n) \sim \frac{K}{q}\frac{x}{\sqrt{\log x}}.
	\end{equation*}
	\end{theorem}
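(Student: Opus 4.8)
The plan is to use the Selberg--Delange method on the Dirichlet series attached to the characteristic function of sums of two squares, twisted by Dirichlet characters modulo $q$. Recall that $\mathbf{1}_{\mathbf{E}}$ is close to being multiplicative: writing $r(n)$ for the number of representations of $n$ as a sum of two squares, the function $\mathbf{1}_{\mathbf{E}}(n)$ is detected by the multiplicative function $b(n) = \tfrac14 \sum_{d\mid n}\chi_4(d)$ being positive, but it is cleaner to work with the standard device of writing $\sum_n \mathbf{1}_{\mathbf{E}}(n) n^{-s}$ as a product over primes where the local factor at $p\equiv 3\pmod 4$ only allows even exponents. This generating series $F(s) = \sum_n \mathbf{1}_{\mathbf{E}}(n)n^{-s}$ factors as $F(s) = (\zeta(s)(s-1))^{1/2} L(s,\chi_4)^{1/2}(1-2^{-s})^{-1/2}\prod_{p\equiv 3(4)}(1-p^{-2s})^{-1/2}\cdot(s-1)^{-1/2}$, exactly the shape that appears as $G(s)(s-1)^{-1/2}$ in Theorem~\ref{thm-S0S-integral}; in particular $F(s)$ has an algebraic singularity of type $(s-1)^{-1/2}$ at $s=1$ and is otherwise analytic and of moderate growth in a zero-free region for $\zeta$ and $L(\cdot,\chi_4)$.

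The key steps, in order, are: (1) For each Dirichlet character $\psi \bmod q$, form the twisted series $F_\psi(s) = \sum_n \mathbf{1}_{\mathbf{E}}(n)\psi(n) n^{-s}$ and analyze its local factors. Since $q\equiv 1\pmod 4$ is prime, exactly half the residues mod $q$ are squares; one checks that for $\psi = \psi_0$ the principal character one simply removes the Euler factor at $q$, changing $F$ by a bounded nonzero analytic factor near $s=1$, so $F_{\psi_0}(s)$ still has a $(s-1)^{-1/2}$ singularity with a computable leading constant. For $\psi\neq\psi_0$, the twisted local factors involve $L(s,\psi)$, $L(s,\psi\chi_4)$, and products $\prod_p(1\pm\psi(p)p^{-s})^{\pm 1/2}$-type terms; the crucial point is that each such $L$-function is nonvanishing and analytic at $s=1$, so $F_\psi(s)$ extends to a neighborhood of $s=1$ with at worst an $(s-1)^{-1/2}$-type singularity but with \emph{no pole}, and in fact contributes a strictly smaller-order term. (2) Apply the Selberg--Delange method (e.g., via a quantitative Perron/contour argument around the singularity, exactly as in the proof of~\eqref{Landau} and as used later in Section~\ref{section-improved-averages}) to each $F_\psi$ to obtain $\sum_{n\le x}\mathbf{1}_{\mathbf{E}}(n)\psi(n) = c(\psi)\, x(\log x)^{-1/2}(1+o(1))$ for $\psi=\psi_0$, with $c(\psi_0) = K\cdot\frac{q}{q-1}\cdot(\text{factor from deleting the prime }q)$, and $= o(x(\log x)^{-1/2})$ for $\psi\neq\psi_0$, since the leading singular term for a non-principal twist is genuinely of smaller order (one can show $c(\psi)=0$ in the leading term, or simply that the contour shift gives a power saving times $(\log x)^{-1/2}$). (3) Extract the residue class by orthogonality:
\begin{equation*}
N(x;q,a) = \frac{1}{\phi(q)}\sum_{\psi\bmod q}\overline{\psi}(a)\sum_{n\le x}\mathbf{1}_{\mathbf{E}}(n)\psi(n) = \frac{1}{\phi(q)}\,c(\psi_0)\,\frac{x}{\sqrt{\log x}}\,(1+o(1)).
\end{equation*}
A short bookkeeping check that $\frac{1}{\phi(q)}c(\psi_0) = K/q$ — the deleted Euler factor at $q$ contributes $(1-2^{-\text{ord}})$-type corrections that telescope against the $\frac{q}{q-1}$ — finishes the asymptotic, uniformly for all $a\in\Z/q\Z$ including $a\equiv 0$.

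The main obstacle is step (2) for the \emph{non-principal} twists: one must verify that $F_\psi(s)$, despite still carrying square-root-type singular behavior coming from the $[(s-1)\zeta(s)]^{1/2}$ factor being replaced by $[L(s,\psi)]^{1/2}$ and the like, does \emph{not} produce a main term of size $x(\log x)^{-1/2}$. The clean way is to isolate the singular factor: $F_\psi(s) = L(s,\psi)^{1/2}L(s,\psi\chi_4)^{1/2}\cdot H_\psi(s)$ where $H_\psi$ is analytic and bounded in a neighborhood of $\mathrm{Re}(s)\ge 1$, and then note that $L(1,\psi)\neq 0$ for $\psi\neq\psi_0$, so $L(s,\psi)^{1/2}$ is analytic (choosing a branch) near $s=1$ — there is no singularity at all, hence the Selberg--Delange / Perron estimate gives a contribution $O(x(\log x)^{-A})$ for every $A$, or with a classical zero-free region $O(x\exp(-c\sqrt{\log x}))$. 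Thus only $\psi_0$ survives, and the equidistribution constant $K/q$ drops out. This is routine given the machinery already invoked in the paper for~\eqref{Landau} and Theorem~\ref{thm-S0S-integral}; the only care needed is the uniform treatment of the Euler factor at the prime $q$ itself, which — because $q\equiv 1\pmod 4$ splits in $\Z[i]$ — behaves like an ordinary split prime and causes no difficulty.
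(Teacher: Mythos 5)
Your approach is the same as the paper's: the theorem itself is quoted from Rieger, but the paper reproves a refined version of it as Theorem~\ref{thm-SOS-AP} by exactly your route --- factor the twisted series $F_\chi(s)$, observe that $F_{\chi_0}(s)=(1-q^{-s})F(s)$ carries the $(s-1)^{-1/2}$ singularity and yields the main term via Selberg--Delange, note that for $\chi\neq\chi_0$ the factor $L(s,\chi)^{1/2}L(s,\chi\chi_4)^{1/2}$ is analytic and nonvanishing at $s=1$ so those twists contribute only to the error, and finish by orthogonality. Your unconditional error term $O(x\exp(-c\sqrt{\log x}))$ for the non-principal twists is in fact stated more carefully than the paper's $O(x^{1/2+\varepsilon})$, which tacitly assumes analyticity of $F_\chi$ in $\re(s)>1/2$, i.e., GRH for the relevant $L$-functions.

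One step fails as written: the case $a\equiv 0\pmod{q}$. The orthogonality identity in your step (3), namely $\frac{1}{\phi(q)}\sum_{\psi}\overline{\psi}(a)\psi(n)=\mathbf{1}[\,n\equiv a\pmod q\,]$, is valid only for $(a,q)=1$; when $q\mid a$ every term $\overline{\psi}(a)$ vanishes, so your formula returns $N(x;q,0)=0$, which is absurd. Your closing claim that the argument works ``uniformly \dots including $a\equiv 0$'' because $q$ splits in $\Z[i]$ conflates the (indeed harmless) Euler factor at $q$ inside the Dirichlet series with the character-sum detection of the residue class, and it is the latter that genuinely breaks down at $a\equiv 0$. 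The fix is one line and is what the paper does: write $N(x;q,0)=\sum_{n\le x}\mathbf{1}_{\mathbf{E}}(n)-\sum_{a\not\equiv 0}N(x;q,a)$, then use Landau's asymptotic~\eqref{Landau} together with the $q-1$ cases already established to get $Kx/\sqrt{\log x}-(q-1)\cdot\tfrac{K}{q}x/\sqrt{\log x}=\tfrac{K}{q}x/\sqrt{\log x}$. With that amendment the proposal is correct.
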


If ones compares the above theorem with experimental data for $N(x; q, a)$ as shown in Table~\ref{table:S0S_mod_5}, there is a discrepancy, and the experimental data shows an excess for 
	$a \equiv 0 \pmod{q}$ compared to the other classes modulo $q$. This is caused by secondary terms that depends on the class $a$, which do not seem to appear in the literature, and we compute the first such term in Theorem~\ref{thm-SOS-AP} below. The proof uses the Selberg--Delange method  which evaluates the contribution of essential singularities by using Hankel's formula, replacing Cauchy's residue theorem for this case. We state below the version of the method needed for the proof of Theorem~\ref{thm-SOS-AP}, and we refer the reader to \cite[Chapter~II.5]{tenenbaum} and \cite[Chapter~13]{koukoulopoulos}, and to Section~\ref{proof-prop-section} for more details. 
	
	\begin{theorem} \cite[Theorem~13.2]{koukoulopoulos} \label{LSD} Let $f(n)$ be a multiplicative function with generating function $F(s) = \sum_{n \geq 1} f(n) n^{-s}$. Suppose there exists $\kappa \in \C$ be such that for $x$ large enough
	$$\sum_{p \leq x} f(p) \log{p} = \kappa x + O_A \left( x/(\log{x})^A \right),$$
	{for each fixed $A>0$,} and such that $|f(n)| \leq \tau_k(n)$ for some $k \in \N$, where $\tau_k$ is the $k$-th divisor function.
	For $j \geq 0$, let $\widetilde{c_j}$  be the Taylor coefficients about 1 of the function $(s-1)^\kappa F(s)/s$. Then, for any $J  \in \N$, and $x$ large enough, we have
		$$
	\sum_{n \leq x} f(n) = x \sum_{j=0}^{J} \widetilde{c_j} \frac{(\log{x})^{\kappa-j-1}}{\Gamma(\kappa-j)} + O \left( \frac{x}{(\log{x})^{J+2-\re(\kappa)}}\right).
	$$
	\end{theorem}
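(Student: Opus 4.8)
The plan is to prove this by the Landau--Selberg--Delange method: turn the Chebyshev-type hypothesis on $\sum_{p\le x}f(p)\log{x}$ into analytic information about $F$ near $s=1$, then extract the asymptotic from a truncated Perron formula by pushing the contour onto the line $\re(s)=1$ and evaluating the resulting branch-point singularity of $(s-1)^{-\kappa}$ via a Hankel contour. First I would set up the analytic input. Since $|f(p)|\le\tau_k(p)=k$, the Euler product of $F$ converges absolutely and is non-vanishing for $\re(s)>1$, so there $\log F(s)=\sum_p f(p)p^{-s}+E(s)$, with $E$ collecting the prime-power contributions and satisfying $E(s)\ll1$ for $\re(s)>\tfrac{1}{2}+\varepsilon$ (again by $|f(n)|\le\tau_k(n)$). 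By partial summation the hypothesis $\sum_{p\le x}f(p)\log{p}=\kappa x+O_A(x(\log{x})^{-A})$, valid for every $A$, gives $\sum_p f(p)p^{-s}=\kappa\log\tfrac{1}{s-1}+\mathcal R(s)$ where $\mathcal R$ is holomorphic for $\re(s)>1$ and extends to a $C^\infty$ function on the closed half-plane $\re(s)\ge1$ with polynomial growth in $|\operatorname{Im}(s)|$; the super-polynomial log-decay of $\sum_{p\le y}f(p)\log{p}-\kappa y$ is exactly what makes all the derivatives converge down to the line. Hence $G(s):=(s-1)^\kappa F(s)=\exp(\mathcal R(s)+E(s))$ is holomorphic for $\re(s)>1$, extends smoothly to $\re(s)\ge1$, and near $s=1$ has the expansion $G(s)/s=\sum_{j=0}^{J}\widetilde{c_j}(s-1)^j+O(|s-1|^{J+1})$ whose coefficients are the $\widetilde{c_j}$ of the statement.

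Next, by the truncated Perron formula with $c=1+1/\log{x}$ and $T$ a fixed power of $x$, one has $\sum_{n\le x}f(n)=\frac{1}{2\pi i}\int_{c-iT}^{c+iT}F(s)x^s s^{-1}\,ds$ up to an error that is negligible for our purposes, the truncation being justified by $|f(n)|\le\tau_k(n)$ and $\sum_{n\le y}\tau_k(n)\ll y(\log{y})^{k-1}$. Using the analytic continuation above I deform the segment of integration onto the contour $\mathcal C$ which runs along $\re(s)=1$ from $1-iT$ to $1+iT$ with a small semicircular detour to the right around $s=1$ of radius $1/\log{x}$; no singularity is crossed, the only obstruction being the branch point of $(s-1)^{-\kappa}$ at $s=1$, which $\mathcal C$ avoids, and on $\re(s)\ge1$ the branch of $(s-1)^{-\kappa}$ is fixed by $\arg(s-1)\in[-\tfrac{\pi}{2},\tfrac{\pi}{2}]$.

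On $\mathcal C$ I write $F(s)x^s s^{-1}=\big(G(s)/s\big)(s-1)^{-\kappa}x^s$ and insert the Taylor expansion of $G(s)/s$ about $s=1$. For each $j\le J$, comparing the integral over $\mathcal C$ with the full vertical-line-with-detour Hankel contour and using $\tfrac{1}{\Gamma(z)}=\tfrac{1}{2\pi i}\int e^w w^{-z}\,dw$ after the substitution $w=(s-1)\log{x}$ yields
\begin{equation*}
\frac{1}{2\pi i}\int_{\mathcal C}(s-1)^{j-\kappa}x^s\,ds = x\,\frac{(\log{x})^{\kappa-j-1}}{\Gamma(\kappa-j)}+(\text{negligible}),
\end{equation*}
the tails $|\operatorname{Im}(s)|>T$ being removed by integration by parts in $s$ and absorbed into the error once $T$ is a suitable power of $x$; summing over $j$ produces the main term $x\sum_{j=0}^{J}\widetilde{c_j}(\log{x})^{\kappa-j-1}/\Gamma(\kappa-j)$. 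The Taylor remainder $O(|s-1|^{J+1})$ is handled by integrating by parts in $s$ via $x^s=(\log{x})^{-1}(x^s)'$ a total of $\ell$ times, with $\ell$ just large enough that the differentiated integrand, of size $\asymp|s-1|^{J+1-\ell-\re(\kappa)}x^{\re(s)}$, is integrable at $s=1$: the contribution is then dominated by the part of $\mathcal C$ at distance $\asymp1/\log{x}$ from $s=1$ and equals $O\big(x(\log{x})^{\re(\kappa)-J-2}\big)$, which is exactly the claimed error and explains the appearance of $\re(\kappa)$ in the exponent. The rest of $\mathcal C$, bounded away from $s=1$, contributes negligibly because $G$ is smooth there with polynomial growth (again by repeated integration by parts, each step gaining a factor $1/\log{x}$), and the boundary terms created by these integrations by parts sit at $s=1\pm iT$ and are negligible for $T$ a power of $x$.

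The hard part is the analytic input of the first step: squeezing out of a single Chebyshev-type estimate the $C^\infty$ extension of $(s-1)^\kappa F(s)$ up to the line $\re(s)=1$, with the polynomial growth needed to justify the contour shift and all the integration-by-parts bounds. Note that this hypothesis yields \emph{no} analytic continuation past $\re(s)=1$, so the entire argument must be run with the contour pinned to that line, which is precisely the reason the final error term is only a negative power of $\log{x}$ rather than something like $\exp(-c\sqrt{\log{x}})$. Once that input is secured, the remainder is the by-now-standard Hankel-contour computation, whose only genuinely delicate point is keeping the implied constants uniform in $J$ and correctly tracking the dependence on $\re(\kappa)$ throughout.
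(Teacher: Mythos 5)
The paper does not prove this statement at all: it is quoted as a black box from \cite[Theorem~13.2]{koukoulopoulos}, so there is no internal proof to compare yours against. Your sketch is the classical contour form of the Landau--Selberg--Delange method -- convert the Chebyshev-type hypothesis (valid for every $A$) by partial summation into $C^\infty$-smoothness of $(s-1)^{\kappa}F(s)$ up to the closed half-plane $\re(s)\geq 1$ with polynomial growth, run truncated Perron, pin the contour to the line $\re(s)=1$ with a notch of radius $\asymp 1/\log x$ around the branch point, evaluate the Taylor expansion term by term via Hankel's formula, and dispose of the Taylor remainder and the tails by repeated integration by parts. This is precisely the circle of ideas the paper itself describes and uses in Section~\ref{proof-prop-section} (Theorem~\ref{hankel} and Figure~\ref{fig-hankel}), your identification of the genuinely hard step (no analytic continuation past $\re(s)=1$, only smoothness on the line) is the right one, and your error accounting $O\bigl(x(\log x)^{\re(\kappa)-J-2}\bigr)$, the size of the first omitted term, is correct. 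One slip to repair: under the stated hypotheses $F$ need not be non-vanishing in $\re(s)>1$ (for $k=3$ the single Euler factor $1-3\cdot 2^{-s}$ is admissible and vanishes at $s=\log_2 3$), so you cannot write $G=\exp(\mathcal{R}+E)$ globally; the standard fix is to take logarithms only of the Euler factors with $p$ large and multiply back the finitely many remaining entire factors (equivalently, define $G(s)=F(s)\zeta(s)^{-\kappa}$ directly and prove its smoothness on the $1$-line that way). With that routine adjustment your outline is a correct proof of the cited result.
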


	\begin{theorem} \label{thm-SOS-AP}
	Let $q\equiv 1 \pmod{4}$ be a prime, and let $K$ and $c_1$ be as defined above. Then, 
		\begin{equation*} 
	\sum_{\substack{n \leq x \\ n \equiv a \pmod{q}}} \mathbf{1}_{\mathbf{E}}(n) = \frac{K}{q}x\sum_{j = 0}^J\frac{c_{j,a}}{(\log x)^{1/2+j}} + O\left( \frac{x}{(\log x)^{J+3/2}}\right),
	\end{equation*}
where
	\begin{align} \label{constant-AP}
	c_{0,a} = c_0 = 1 \quad \text{ and } \quad
	c_{1,a} :=
	\begin{dcases}
	c_1 + \frac{\log{q}}{2} & \text{ if } a \equiv 0 \pmod{q}\\
	c_1 - \frac{\log{q}}{2(q-1)} & \text{ otherwise.}
	\end{dcases}.
	\end{align}
	\end{theorem}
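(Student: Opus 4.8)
The plan is to reduce both cases to the unrestricted count in~\eqref{refined-Landau}, the only congruence-sensitive input being the elementary identity $\mathbf{1}_{\mathbf{E}}(qm)=\mathbf{1}_{\mathbf{E}}(m)$ for all $m\geq 1$, which holds because $q\equiv 1\pmod 4$: a prime $p\equiv 3\pmod 4$ divides $qm$ to an even power exactly when it divides $m$ to an even power. For $a\equiv 0\pmod q$, the substitution $n=qm$ gives
\[
\sum_{\substack{n\leq x\\ q\mid n}}\mathbf{1}_{\mathbf{E}}(n)=\sum_{m\leq x/q}\mathbf{1}_{\mathbf{E}}(m),
\]
and I would insert~\eqref{refined-Landau} evaluated at $x/q$ (its error $O\big((x/q)(\log(x/q))^{-J-3/2}\big)$ being $O(x(\log x)^{-J-3/2})$) and expand $(\log(x/q))^{-1/2-j}=(\log x)^{-1/2-j}\big(1+(j+\tfrac12)\tfrac{\log q}{\log x}+\cdots\big)$. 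Collecting powers of $\log x$, the coefficient of $(\log x)^{-1/2}$ is $\tfrac{K}{q}c_0$, giving $c_{0,0}=c_0=1$, and the coefficient of $(\log x)^{-3/2}$ gets $c_1$ from the $j=1$ term together with $\tfrac12 c_0\log q$ from the first-order expansion of the $j=0$ term, giving $c_{1,0}=c_1+\tfrac{\log q}{2}$; continuing to order $J$ gives the asserted shape.

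For $a\not\equiv 0\pmod q$ we have $\gcd(a,q)=1$ since $q$ is prime, so orthogonality gives $N(x;q,a)=\tfrac{1}{\phi(q)}\sum_{\chi\bmod q}\overline{\chi}(a)\sum_{n\leq x}\mathbf{1}_{\mathbf{E}}(n)\chi(n)$. The principal character contributes $\tfrac{1}{\phi(q)}\big(\sum_{n\leq x}\mathbf{1}_{\mathbf{E}}(n)-\sum_{m\leq x/q}\mathbf{1}_{\mathbf{E}}(m)\big)$, which is independent of $a$ and is evaluated by the previous case. For a non-principal $\chi$, the function $n\mapsto\mathbf{1}_{\mathbf{E}}(n)\chi(n)$ is multiplicative with $|\mathbf{1}_{\mathbf{E}}(n)\chi(n)|\leq 1=\tau_1(n)$, and since $\mathbf{1}_{\mathbf{E}}(p)=1$ exactly for $p=2$ and $p\equiv 1\pmod 4$, writing the condition $p\equiv 1\pmod 4$ as $\tfrac12(1+\chi_4(p))$ shows
\[
\sum_{p\leq x}\mathbf{1}_{\mathbf{E}}(p)\chi(p)\log p=\tfrac12\Big(\sum_{p\leq x}\chi(p)\log p+\sum_{p\leq x}(\chi\chi_4)(p)\log p\Big)+O(\log x);
\]
as $\chi$ and $\chi\chi_4$ are non-principal modulo $4q$, the prime number theorem in arithmetic progressions makes this $O_A(x/(\log x)^A)$, so Theorem~\ref{LSD} applies with $\kappa=0$, and since $1/\Gamma(-j)=0$ for every integer $j\geq 0$ its main term is empty, giving $\sum_{n\leq x}\mathbf{1}_{\mathbf{E}}(n)\chi(n)=O(x(\log x)^{-J-2})$. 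Hence $N(x;q,a)=\tfrac{1}{q-1}\big(\sum_{n\leq x}\mathbf{1}_{\mathbf{E}}(n)-\sum_{m\leq x/q}\mathbf{1}_{\mathbf{E}}(m)\big)+O(x(\log x)^{-J-2})$; inserting~\eqref{refined-Landau} for both sums, expanding as before and using $c_0=1$, I obtain $c_{0,a}=1$ and $c_{1,a}=\tfrac{qc_1-c_{1,0}}{q-1}=c_1-\tfrac{\log q}{2(q-1)}$.

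The only genuinely analytic ingredient is the prime number theorem in arithmetic progressions modulo $4q$ (equivalently, the non-vanishing of $L(s,\chi)$ and $L(s,\chi\chi_4)$ at $s=1$), which forces the non-principal characters to be negligible; I expect the main obstacle to be purely the bookkeeping: carrying the error terms uniformly through the expansion $\log(x/q)=\log x-\log q$ up to order $J$, and matching coefficients against the normalisation of Theorem~\ref{LSD}, in particular $\widetilde{c_j}/\Gamma(\tfrac12-j)=Kc_j$, so that $\widetilde{c_0}/\Gamma(-\tfrac12)=-K/2$ enters the secondary term with the right sign. As an alternative to the substitution $n=qm$, one may use that $\mathbf{1}_{\mathbf{E}}$ is multiplicative with $F(s):=\sum_{n\geq 1}\mathbf{1}_{\mathbf{E}}(n)n^{-s}=(1-2^{-s})^{-1}\prod_{p\equiv 1\pmod 4}(1-p^{-s})^{-1}\prod_{p\equiv 3\pmod 4}(1-p^{-2s})^{-1}$, so that $\sum_{q\nmid n}\mathbf{1}_{\mathbf{E}}(n)n^{-s}=(1-q^{-s})F(s)$; applying Theorem~\ref{LSD} to this multiplicative function with $\kappa=\tfrac12$, the factor $(1-q^{-s})$ and its derivative at $s=1$ produce exactly the shift of the relevant Taylor coefficient by $\tfrac{\log q}{2}$.
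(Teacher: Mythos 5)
Your proof is correct, and although it shares the orthogonality decomposition with the paper's argument, it differs in two substantive ways. First, you handle the class $a\equiv 0\pmod q$ directly and first, via the dilation identity $\mathbf{1}_{\mathbf{E}}(qm)=\mathbf{1}_{\mathbf{E}}(m)$ and a re-expansion of $(\log (x/q))^{-1/2-j}$ in powers of $\log q/\log x$; the paper instead applies Theorem~\ref{LSD} afresh to $F_{\chi_0}(s)=(1-q^{-s})F(s)$, computes the Taylor coefficients $\widetilde{b_0},\widetilde{b_1}$ of $(s-1)^{1/2}F_{\chi_0}(s)/s$, and recovers the class $0$ last by subtracting the nonzero classes from Landau's total. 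These are equivalent in content --- your substitution $n=qm$ is precisely the identity $\sum_{q\mid n}\mathbf{1}_{\mathbf{E}}(n)n^{-s}=q^{-s}F(s)$ read on the summatory side, as you note in your closing remark --- but your version reuses \eqref{refined-Landau} as a black box and keeps the bookkeeping elementary. Second, and more significantly, your disposal of the non-principal characters via Theorem~\ref{LSD} with $\kappa=0$ (so that every $1/\Gamma(-j)$ vanishes and only the error $O(x(\log x)^{-J-2})$ survives) is unconditional and fully justified, whereas the paper's one-line claim that analyticity of $F_\chi$ for $\re(s)>1/2$ yields $O(x^{1/2+\varepsilon})$ is delicate as stated: $F_\chi$ is a square root of $L(s,\chi)L(s,\chi\chi_4)(\cdots)$ and is single-valued in that half-plane only where these $L$-functions do not vanish, so the asserted power saving really requires GRH or at least a more careful contour argument. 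Your bound is weaker but amply sufficient for the target error $O(x(\log x)^{-J-3/2})$, and it buys an unconditional proof. The coefficient computations check out: $c_{0,0}=1$ and $c_{1,0}=c_1+\tfrac{\log q}{2}$ from the first-order term of the $j=0$ expansion, and $c_{1,a}=\tfrac{qc_1-c_{1,0}}{q-1}=c_1-\tfrac{\log q}{2(q-1)}$ for $a\not\equiv 0$, in agreement with \eqref{constant-AP}.
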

We refer the reader to Table~\ref{table:S0S_mod_5} for the comparison between the numerical data and Theorem~\ref{thm-SOS-AP}.

\begin{table}[htb!]
	\captionsetup{width=.58\linewidth}
	\begin{tabular}{cc|c|cc}
		\hline 
		\makecell{$q$} & \makecell{$a$} & \makecell{$N(x; q, a)$} & \makecell{Main term}
		& \makecell{Main and secondary terms}\\
		\hline
		\hline
		& $0$ & 30 700 929 089 & 29 077 161 174 & 30 536 403 581 \\
		& $1$ & 29 508 931 067 &   & 29 477 858 608     \\
		$5$ & $2$ & 29 508 917 111 & &  \\
		& $3$ & 29 508 920 778 &   &     \\
		& $4$ & 29 508 930 814 &   &    \\
		\hline
	\end{tabular}
	\caption{Comparison of the experimental data for $N(x; q, a)$ and the asymptotic of Theorem~\ref{thm-SOS-AP} using only the main term, or the main term and the first secondary term for $q = 5$ and $x = 10^{12}$.
		The average of $N(x; q, a)$ is $\approx$ 29 747 325 771.}
	\label{table:S0S_mod_5}
\end{table}

		\begin{proof}
	 Let $F(s) := \sum_{n \ge 1}\mathbf{1}_{\mathbf{E}}(n)n^{-s}$  be the generating series for sums of two squares. Using the well-known fact that $n$ is a sum of two squares if and only if $v_p(n)$ is even for all primes $p \equiv 3 \mod 4$, it is easy to see that for $\re(s) > 1$, 
	 \begin{align*}
F^2(s) &= \prod_{p \not\equiv 3 \mod 4} \left( 1 - \frac{1}{p^s} \right)^{-2} \prod_{p \equiv 3 \mod 4}  \left( 1 - \frac{1}{p^{2s}} \right)^{-2} \\
&= \zeta(s)L(s, \chi_4)\left(1 - \frac{1}{2^{s}}\right)^{-1}\prod_{p \equiv 3 (4)}\left(1 - \frac{1}{p^{2s}}\right)^{-1} \end{align*}
	 where $\chi_4$ is the non-principal Dirichlet character modulo 4.
	 Landau ~\cite{Lau} also showed 
	that in a neighborhood of $s = 1$, 
\[
\frac{F(s)}{s^2} = \sum_{\ell \ge 0}i a_\ell (1-s)^{\ell-1/2},
\]
with
$a_0 = K\sqrt{\pi}$ and 
$a_1= a_0(2c_1 + 1)$ \cite{Sha}. Applying Theorem~\ref{LSD} with $\kappa=1/2$, we get~\eqref{refined-Landau}, using the values $a_0, a_1$ to get explicit values for the first two Taylor coefficients of $(s-1)^{1/2} F(s)/s$.

To introduce the congruence condition, we write for $a \not\equiv 0 \mod q$,
\begin{align} \label{ortho-rel}
N(x; q, a) 
= \frac{1}{q-1}\sum_{\chi \pmod{q}}\overline{\chi}(a)\sum_{n \le x}\chi(n)\mathbf{1}_{\mathbf{E}}(n), 
\end{align}
and we denote the generating function of $f_\chi(n) = \chi(n)\mathbf{1}_{\mathbf{E}}(n)$ by $F_\chi(s) := \sum_{n \ge 1}\chi(n)\mathbf{1}_{\mathbf{E}}(n)n^{-s}.$ 
For $\chi_0$ the principal character modulo $q$ and $\chi \neq \chi_0$, we have for $\re(s) > 1$, \begin{equation}  \begin{split} \label{Fchi-Fchi0}
F_\chi^2(s) &= L(s, \chi)L(s, \chi_4\chi)\left(1 - \frac{\chi(2)}{2^{s}}\right)^{-1}\prod_{p \equiv 3 (4)}\left(1 - \frac{\chi^2(p)}{p^{2s}}\right)^{-1} \\
F_{\chi_0}^2(s) &= \left(1 - \frac{1}{q^{s}}\right)^{2}F^2(s).
\end{split}\end{equation}
For $\chi \neq \chi_0$, since $F_{\chi}(s)$ is analytic for $\operatorname{Re}(s) > 1/2$, we have for any $\varepsilon > 0$ that
$$\sum_{n \le x}\chi(n)\mathbf{1}_{\mathbf{E}}(n) = O \left( x^{1/2+\varepsilon} \right),$$
and the theorem will follow by evaluating $\sum_{n \le x}\chi_0(n)\mathbf{1}_{\mathbf{E}}(n)$ with the Selberg--Delange method. 
Let $\widetilde{b_j}$ be the Taylor coefficients of $(s-1)^{1/2} F_{\chi_0}(s)/s$ around $s=1$, and $\widetilde{c_j}$ are the Taylor coefficients of $(s-1)^{1/2} F(s)/s$ around $s=1$.
From~\eqref{Fchi-Fchi0}, it is easy to compute 
\begin{align*}
\widetilde{b_0} &= (1-q^{-1}) \, \widetilde{c_0} = (1-q^{-1}) \,  K \sqrt{\pi} \\
\widetilde{b_1} &=  (1-q^{-1}) \, \widetilde{c_1} + \frac{\log q}{q} \, \widetilde{c_0} = K \sqrt{\pi} \left( \frac{\log{q}}{q} - 2 c_1 (1-q^{-1}) \right).
\end{align*}
Applying Theorem~\ref{LSD} with $\kappa=1/2$, to estimate the sum $\sum_{n \le x}\chi_0(n)\mathbf{1}_{\mathbf{E}}(n),$
and replacing in~\eqref{ortho-rel}, we get 
the statement of the theorem when $a \not\equiv 0 \mod q$, with
$$\frac{K}{q}c_{0,a} = \frac{\widetilde{b_0}}{(q-1) \Gamma(1/2)}, \;\;\;\; \frac{K}{q}c_{1,a} = \frac{\widetilde{b_1}}{(q-1) \Gamma(-1/2)}.$$
For $a \equiv 0 \pmod{q}$, we use the above and ~\eqref{refined-Landau} to obtain
\begin{align*}
N(x; q, 0) &= \sum_{\substack{n \leq x}} \mathbf{1}_{\mathbf{E}}(n) - \sum_{a \not\equiv 0 \mod q} N(x; q, a) \\ &= 
\frac{K}{q}x\left(\frac{1}{(\log x)^{1/2}} + \left(c_1 + \frac{\log{q}}{2}\right)\frac{1}{(\log x)^{3/2}} +
\sum_{j = 2}^J\frac{c_j - (q-1)c_{j,1}}{(\log x)^{1/2+j}} \right) + O\left(x(\log x)^{-3/2-J}\right), 
\end{align*}
which completes the proof. 
\end{proof}

	 \section{Hardy--Littlewood conjectures in arithmetic progressions for sum of two squares}
\label{section-HL}

We state in this section the analogue of the Hardy--Littlewood prime $k$-tuple conjectures for the case of sums of two squares, following \cite{FKR}. We also state new bounds on the average of the Hardy--Littlewood constant in this context that are useful in our heuristic for Conjecture~\ref{main-conjecture-intro}, but are also interesting in themselves as they are related to the distribution of gaps between sums of two squares.

For $k \geq 1$, let $\mathcal{H} = \left\{ h_1, \dots, h_k \right\} \subseteq \Z$, and
	\begin{align*} 
	R_k(\mathcal{H}; x) :=  \frac{1}{x} \sum_{\substack{n \leq x}}  \mathbf{1}_{\mathbf{E}}(n+h_1 ) \dots \mathbf{1}_{\mathbf{E}}(n+h_k).
	\end{align*}
In the case $\mathcal{H} = \{ 0 \}$, we have
	$$
	R_1(x) :=  R_1(\{ 0 \}; x) = \frac{1}{x} \sum_{\substack{n \leq x}} \mathbf{1}_{\mathbf{E}}(n) \sim \frac{K}{\sqrt{\log x}}.
	$$
	The philosophy of the Hardy--Littlewood conjecture is that the events $\mathbf{1}_{\mathbf{E}}(n+h_i)$ are ``independent'', and the probability that $n+h_i$ are simultaneously \sos for $1 \leq i \leq k$ is the product of the probabilities, which is (ignoring the small differences between $\log{n}$ or $\log{n+h_i}$)
	$$
	\left( \frac{K}{\sqrt{\log{n}}} \right)^k.
	$$
	Of course, the events are not really independent, so we adjust by considering the probabilities that $n+h_i$ are \sos modulo $p$ versus the probably that $k$ independent integers are \sos modulo $p$. To do so,  for each prime $p$, we define
	\begin{align*} 
		\delta_{\mathcal{H}}(p) = \lim_{\alpha\rightarrow\infty} \frac{\# \lbrace 0 \leq a < p^{\alpha} : \forall h \in \mathcal{H}, a + h \equiv \square + \square \pmod{p^{\alpha}} \rbrace}{p^{\alpha}}.
	\end{align*}
	Since  $\delta_{\mathcal{H}}(p) = 1$ for $p \equiv 1 \mod 4$ (see e.g.\ \cite[Proposition~5.1]{FKR}),
	we define the singular series for $\mathcal{H} = \left\{ h_1, \dots, h_k \right\}$ by
	\begin{align}\label{def singular series product}
			\mathfrak{S}( \mathcal{H} ) := \prod_{p \not\equiv 1 \mod 4} \frac{\delta_{\mathcal{H}}(p)}{(\delta_{ \{ 0 \}}(p))^k}.
	\end{align}
	It is proven in \cite{FKR} that the limit defining $\delta_{\mathcal{H}}(p)$ exists, and the the Euler product converges to a non-zero limit provided that $\delta_{\mathcal{H}}(p) > 0$ for all $p \not\equiv 1 \mod 4$.

	\begin{conjecture}\cite[Conjecture~1.1]{FKR} \label{conj-HL}
	Fix $k \geq 1$, and $\mathcal{H} = \left\{ h_1, \dots, h_k \right\} \subseteq \Z$. If $\mathfrak{S}( \mathcal{H} )  > 0$, then 
	$$
	R_k(\mathcal{H}; x) \sim \mathfrak{S}( \mathcal{H} ) \left( R_1(x) \right)^k \sim \mathfrak{S}( \mathcal{H} ) \left( \frac{K}{\sqrt{\log{x}}} \right)^k
	$$
	\end{conjecture}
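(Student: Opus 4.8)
Since the final statement is a \emph{conjecture}, what I would present here is not a route to a theorem but the Hardy--Littlewood heuristic that produces it: for $k\ge 2$ a rigorous proof would require input as deep as the prime $k$-tuple conjecture, which is beyond current reach. The plan is to turn the informal independence philosophy stated just before the conjecture into a quantitative prediction, and then indicate how one would try, and where one necessarily fails, to make it rigorous.

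First, the main term. Modelling $n+h_1,\dots,n+h_k$ as $k$ ``independent'' integers, each a sum of two squares with density $R_1(x)\sim K/\sqrt{\log x}$, gives the first guess $R_k(\mathcal{H};x)\approx R_1(x)^k$. The events are of course correlated, and the correction is measured prime by prime: at a prime $p$, the density of $n$ for which $n+h_1,\dots,n+h_k$ are \emph{simultaneously} sums of two squares mod $p^\alpha$ (as $\alpha\to\infty$) is $\delta_{\mathcal{H}}(p)$, whereas for $k$ independent integers it would be $\delta_{\{0\}}(p)^k$, so the local correction at $p$ is $\delta_{\mathcal{H}}(p)/\delta_{\{0\}}(p)^k$. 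Since $\delta_{\mathcal{H}}(p)=1$ for $p\equiv 1\pmod 4$ (\cite[Proposition~5.1]{FKR}), this factor is $1$ for such $p$, and multiplying over the remaining primes gives exactly $\mathfrak{S}(\mathcal{H})$ of \eqref{def singular series product}; one is thus led to $R_k(\mathcal{H};x)\sim\mathfrak{S}(\mathcal{H})\,R_1(x)^k$.

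To try to make this precise I would run a sieve of Selberg/CRT type: fix a slowly growing cut-off $z=z(x)$, compute by the Chinese Remainder Theorem the exact density of $n\le x$ meeting all local ``sum of two squares mod $p^\alpha$'' conditions at primes $p\le z$ --- this reproduces $\prod_{p\le z,\ p\not\equiv 1\ (4)}\delta_{\mathcal{H}}(p)$ up to controlled error --- and then argue that the conditions at primes $p>z$ contribute, in aggregate and across the $k$ shifts, a factor that reassembles with the small-prime densities into $R_1(x)^k$. Convergence and nonvanishing of the residual product $\prod_{p\not\equiv 1\ (4)}\delta_{\mathcal{H}}(p)/\delta_{\{0\}}(p)^k=\mathfrak{S}(\mathcal{H})$ is the easy part: once $p>\max_{i\ne j}|h_i-h_j|$ at most one of $n+h_1,\dots,n+h_k$ can be divisible by $p$, so the $k$ local conditions decouple off a set of density $O(p^{-2})$, giving $\delta_{\mathcal{H}}(p)=\delta_{\{0\}}(p)^k\bigl(1+O(p^{-2})\bigr)$, whence absolute convergence; positivity is precisely the hypothesis $\mathfrak{S}(\mathcal{H})>0$, i.e.\ that no $\delta_{\mathcal{H}}(p)$ vanishes.

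The main obstacle is the step dealing with the primes $p>z$. Bounding the joint contribution of the medium- and large-range primes $\equiv 3\pmod 4$ rigorously runs into the parity/large-sieve barrier that also obstructs the prime $k$-tuple conjecture, so for $k\ge 2$ the asymptotic stays conjectural; for $k=1$ the whole thing collapses to Landau's theorem \eqref{Landau}. What one \emph{can} extract unconditionally is an upper bound $R_k(\mathcal{H};x)\ll_k\mathfrak{S}(\mathcal{H})(\log x)^{-k/2}$ of the predicted order of magnitude, via a sieve majorant for the sum-of-two-squares indicator; it is the asymptotic equality with the exact constant $\mathfrak{S}(\mathcal{H})$ that no current method delivers.
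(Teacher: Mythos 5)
This is a conjecture, and your write-up correctly treats it as such, reproducing exactly the heuristic the paper gives just before stating it: model the events $\mathbf{1}_{\mathbf{E}}(n+h_i)$ as independent with density $R_1(x)$, then correct prime by prime via the ratio $\delta_{\mathcal{H}}(p)/\delta_{\{0\}}(p)^k$, which is trivial for $p\equiv 1\pmod 4$ and assembles into $\mathfrak{S}(\mathcal{H})$. Your added remarks on convergence of the Euler product (local factors $1+O_k(p^{-2})$ for large $p$) and on why the asymptotic is out of reach are consistent with the paper and with \cite{FKR}, so nothing further is needed.
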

	
	This conjecture is still open, but it is known that   $\sum_{\substack{n}}  \mathbf{1}_{\mathbf{E}}(n+h_1 ) \dots \mathbf{1}_{\mathbf{E}}(n+h_k)$ is infinite
	for $k=2,3$ by the work of Hooley \cite{Hooley1,Hooley2}.

	It is not straightforward to give a simple formula for the singular series $\mathfrak{S}( \mathcal{H} )$ for a given set $\mathcal{H}$ (see Section~\ref{Section proof prop bound for D with large sets}), except the trivial cases $\mathfrak{S}( \emptyset)=
	\mathfrak{S}( \{ h \} )=1.$ For $\mathcal{H} = \left\{ 0, h \right\}$, Connors and Keating \cite{CK} computed
\begin{equation}\label{connors-keating-normalized}
\mathfrak{S}(\lbrace 0, h\rbrace) =  \frac{1}{2K^2} W_2(h) \prod_{\substack{p\equiv 3 \pmod{4}\\ p\mid h}} \frac{1 - p^{-v_p(h) -1}}{1-p^{-1}},
\end{equation} 
where
		$$W_2(h) = \begin{cases}
		1 \text{ if } 2\nmid h \\
		2 - 3\cdot2^{-v_2(h)} \text{ otherwise,}
		\end{cases}$$ and $v_p$ is the $p$-adic valuation.
		
		Notice that it means that $\mathfrak{S}( \mathcal{H} ) > 0$ when $k=2$. This can also be proven for $k=3$, but for general~$k$, we can find sets~$\mathcal{H}$ such that $\mathfrak{S}( \mathcal{H} ) = 0$. 
		It is easy to see that  $\sum_{\substack{n}}  \mathbf{1}_{\mathbf{E}}(n+h_1 ) \dots \mathbf{1}_{\mathbf{E}}(n+h_k)$ is finite 
when $\mathfrak{S}( \mathcal{H} ) = 0$.

	We now state a slight generalization of the Hardy--Littlewood conjecture where $n$ is restricted to an arithmetic progression modulo $q$.

	\begin{conjecture} \label{SOS-HL} (Hardy--Littlewood for \sos in arithmetic progressions) Fix $k \geq 1$, and $\mathcal{H} = \left\{ h_1, \dots, h_k \right\} \subseteq \Z$. 
	Let $q\equiv 1 \pmod{4}$ be a prime, and $a \in  \Z$.
	If $\mathfrak{S}( \mathcal{H} )  > 0$, then 
	\begin{eqnarray*}
	R_k(\mathcal{H}; x, q, a) &:=&  \frac{1}{x} \sum_{\substack{n \leq x\\ n \equiv a \mod q}}  \mathbf{1}_{\mathbf{E}}(n+h_1 ) \dots \mathbf{1}_{\mathbf{E}}(n+h_k)\\
	&\sim& \frac{\mathfrak{S}( \mathcal{H} )}{q}  \left( \frac{K}{\sqrt{\log{x}}} \right)^k.
	\end{eqnarray*}
	\end{conjecture}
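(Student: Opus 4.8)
The statement is a conjecture, so what I would offer is a heuristic derivation running exactly parallel to the one that motivates Conjecture~\ref{conj-HL}, now tracking the extra constraint $n \equiv a \pmod q$. The plan is to carry out the standard ``independence with local corrections'' heuristic: treat the events $\mathbf{1}_{\mathbf{E}}(n+h_i)$ for $1 \le i \le k$, together with the event $n \equiv a \pmod q$, as independent modulo every prime power, and then multiply the naive probability $(K/\sqrt{\log x})^k$ by, for each prime $p$, the ratio of the true local density to the ``independent'' one. The only analysis not already contained in the justification of Conjecture~\ref{conj-HL} concerns the single prime $q$, and, for $p \ne q$, the observation that the congruence modulo $q$ decouples from the sum-of-two-squares conditions.

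First I would recall that for a prime $p$ and $\alpha$ large the true probability that $n+h_i \equiv \square + \square \pmod{p^\alpha}$ for all $i$ is $\delta_{\mathcal{H}}(p)$, whereas $k$ independent integers each satisfy the one-variable condition with probability $\delta_{\{0\}}(p)$, so the correction at $p$ is $\delta_{\mathcal{H}}(p)/\delta_{\{0\}}(p)^k$, which is precisely the factor in~\eqref{def singular series product}. For $p \ne q$ the constraint $n \equiv a \pmod q$ imposes nothing modulo $p^\alpha$, so these factors are unchanged, and their product over $p \not\equiv 1 \pmod 4$ is $\mathfrak{S}(\mathcal{H})$; the factors at $p \equiv 1 \pmod 4$ are trivial by \cite[Proposition~5.1]{FKR}, essentially because such $p$ split in $\Z[i]$ so that every residue class modulo $p^\alpha$ is a sum of two squares in the right proportion. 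At $p = q$, since $q \equiv 1 \pmod 4$ we have $\delta_{\mathcal{H}}(q) = \delta_{\{0\}}(q) = 1$: the sum-of-two-squares conditions are vacuous modulo powers of $q$, so adjoining $n \equiv a \pmod q$ simply contributes a clean extra factor $1/q$ and cannot interact with $\mathcal{H}$. Multiplying everything together yields the predicted density $\tfrac{1}{q}\,\mathfrak{S}(\mathcal{H})\,(K/\sqrt{\log x})^k$ for $R_k(\mathcal{H};x,q,a)$.

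If instead one wants to see this as a formal consequence of the unrestricted conjecture, I would detect the congruence with the $q$ additive characters modulo $q$, writing $\mathbf{1}_{n \equiv a \,(q)} = \tfrac1q\sum_{t=0}^{q-1} \exp\big(2\pi i\, t(n-a)/q\big)$, which is uniform in $a$ (including $a \equiv 0$, the case handled separately in Theorem~\ref{thm-SOS-AP}). The term $t = 0$ contributes $\tfrac1q \sum_{n \le x}\prod_i \mathbf{1}_{\mathbf{E}}(n+h_i) = \tfrac1q x\,R_k(\mathcal{H};x)$, which by Conjecture~\ref{conj-HL} gives the main term $\tfrac{\mathfrak{S}(\mathcal{H})}{q}(K/\sqrt{\log x})^k x$. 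The remaining step would be to show that for $t \ne 0$ the twisted correlation $\sum_{n \le x}\exp(2\pi i\, tn/q)\prod_i \mathbf{1}_{\mathbf{E}}(n+h_i)$ is $o\big(x(\log x)^{-k/2}\big)$, i.e. the $k$-tuple correlation of $\mathbf{1}_{\mathbf{E}}$ does not correlate with nontrivial characters modulo $q$. For $k = 1$ this follows from Theorem~\ref{thm-SOS-AP}, since the leading term $\tfrac{K}{q}\tfrac{x}{\sqrt{\log x}}$ is independent of the class and hence cancels when summed against a nontrivial character; for general $k$ one would need a uniform version of Hardy--Littlewood in progressions, which is itself of the same depth as the conjecture being proved.

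The real obstacle, of course, is that everything rests on Conjecture~\ref{conj-HL}, which is open, so there is no unconditional route to Conjecture~\ref{SOS-HL}. Granting Hardy--Littlewood, the genuinely new content, namely the local analysis at $q$, is easy precisely because the hypothesis $q \equiv 1 \pmod 4$ removes $q$ from the singular series; the hard work of the paper lies not in this leading-order statement but in the much finer problem of isolating the secondary terms --- the Lemke Oliver--Soundararajan-type bias --- from weighted averages of the constants $\mathfrak{S}(\{0,h\})$, which is the business of Theorem~\ref{Intro prop2.1}.
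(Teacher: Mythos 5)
Your heuristic is correct and matches the paper's own (necessarily non-rigorous) justification: the statement is a conjecture, and the paper simply remarks after it that since $q \equiv 1 \pmod 4$ the prime $q$ does not appear in the Euler product~\eqref{def singular series product}, so the congruence condition decouples and contributes only the factor $1/q$ — precisely your local analysis at $p=q$. Your additional character-sum reformulation and the caveat that everything rests on the open Conjecture~\ref{conj-HL} are accurate but go beyond what the paper records.
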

	
We remark that unlike the generalized Hardy--Littlewood conjecture of \cite{LOS}, we do not need to adjust the local factors at the prime numbers dividing $q$ in  $\mathfrak{S}( \mathcal{H} )$ since we fixed $q$ to be prime with $q \equiv 1 \mod 4$, and this prime does not appear in the Euler product~\eqref{def singular series product} defining 
 $\mathfrak{S}( \mathcal{H} )$.

 In Conjectures~\ref{conj-HL} and~\ref{SOS-HL}, we used $K/\sqrt{\log{n}}$ for the probability that $n$ is a sum of two squares. As the secondary term for this probability depends on the residue class modulo $q$ from Theorem~\ref{thm-SOS-AP}, we get more precise results by using this second term to refine the probability in Conjecture~\ref{SOS-HL}. We state that in the conjecture below, 
	and we used it to illustrate the fit with the numerical data in Table~\ref{table:CK_5_a_h=1-first},
but not in the rest of the paper while getting in the heuristic model leading to  Conjecture~\ref{main-conjecture-intro} and Conjecture
\ref{general-conjecture-intro}  (as those secondary terms would be smaller than some error terms occuring in the heuristic).

	\begin{conjecture} \label{SOS-HL-refined} (Refined Hardy--Littlewood in arithmetic progressions) Fix $k \geq 1$, and $\mathcal{H} = \left\{ h_1, \dots, h_k \right\} \subseteq \Z$. 
	Let $q\equiv 1 \pmod{4}$ be a prime, and $a \in  \Z$.
	If $\mathfrak{S}( \mathcal{H} )  > 0$, then 
	\begin{eqnarray*}
	R_k(\mathcal{H}; x, q, a) 
	&\sim&  \frac{ \mathfrak{S}( \mathcal{H} )}{q} K^k \left( \frac{1}{(\log{x})^{k/2}} + \frac{1}{(\log{x})^{k/2+1}}\sum_{h \in \mathcal{H}}c_{1, h { +a}}  + O \left(\frac{1}{(\log x)^{k/2+2}}\right)\right),
	\end{eqnarray*}
	where $c_{1,h}$ is defined by ~\eqref{constant-AP}. 
	\end{conjecture}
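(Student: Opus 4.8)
The plan is to treat this statement as the paper treats its other Hardy--Littlewood conjectures: it is a heuristic refinement of Conjecture~\ref{SOS-HL}, to be ``derived'' by pushing the probabilistic model one order further with Theorem~\ref{thm-SOS-AP} as the new input, and then validated by internal consistency and by the numerical comparison of Table~\ref{table:CK_5_a_h=1-first}. Nothing here can be made unconditional, since Conjecture~\ref{conj-HL} itself is open.

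First I would recall the model behind Conjecture~\ref{SOS-HL}: the events $\mathbf{1}_{\mathbf{E}}(n+h_i)$, $1\le i\le k$, are treated as independent apart from the correlations at the primes $p\not\equiv 1\pmod 4$, each being assigned the ``probability'' $K/\sqrt{\log n}$, with those correlations packaged into the Euler product $\mathfrak{S}(\mathcal{H})$; the prime $q$ never enters, being $\equiv 1\pmod 4$. The refinement replaces the crude density $K/\sqrt{\log n}$ by the class-sensitive local density read off from Theorem~\ref{thm-SOS-AP}: differentiating the asymptotic there, the local density of \sos near $t$ among integers in a fixed class $b\pmod q$ is
\begin{equation*}
K\left(\frac{1}{(\log t)^{1/2}}+\frac{c_{1,b}-\tfrac12}{(\log t)^{3/2}}+O\!\left(\frac{1}{(\log t)^{5/2}}\right)\right),
\end{equation*}
with $c_{1,b}$ as in~\eqref{constant-AP}, the $-\tfrac12$ arising from differentiating the main term $t(\log t)^{-1/2}$.

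Next, since $n\equiv a\pmod q$ forces $n+h_i\equiv h_i+a\pmod q$, I would assign to the event $n+h_i\in\mathbf{E}$ the probability $K\big((\log n)^{-1/2}+(c_{1,h_i+a}-\tfrac12)(\log n)^{-3/2}+\cdots\big)$, multiply the $k$ of these, and insert $\mathfrak{S}(\mathcal{H})$; the resulting heuristic value of the summand $\prod_{i=1}^{k}\mathbf{1}_{\mathbf{E}}(n+h_i)$ is
\begin{equation*}
\frac{\mathfrak{S}(\mathcal{H})}{q}\,K^k\left(\frac{1}{(\log n)^{k/2}}+\frac{\bigl(\sum_{h\in\mathcal{H}}c_{1,h+a}\bigr)-\tfrac{k}{2}}{(\log n)^{k/2+1}}+O\!\left(\frac{1}{(\log n)^{k/2+2}}\right)\right).
\end{equation*}
Summing over $n\le x$ with $n\equiv a\pmod q$ and using $\sum_{\substack{n\le x\\ n\equiv a\pmod q}}(\log n)^{-k/2}\sim\tfrac1q\,x(\log x)^{-k/2}\bigl(1+\tfrac{k}{2}(\log x)^{-1}+\cdots\bigr)$, the $-\tfrac{k}{2}$ cancels the $+\tfrac{k}{2}$ produced by this partial summation, leaving $\sum_{h\in\mathcal{H}}c_{1,h+a}$ as the coefficient of $(\log x)^{-k/2-1}$; dividing by $x$ yields the claimed asymptotic.

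The real obstacle, beyond the fact that this is irreducibly a heuristic, is controlling everything of size $(\log x)^{-k/2-1}$: one must argue that $\mathfrak{S}(\mathcal{H})$ carries no $\log$-dependent secondary term of its own that would feed into this order, and that the gap between $\log(n+h_i)$ and $\log n$ perturbs only the $O((\log x)^{-k/2-2})$ term. The sanity checks I would run are the case $k=1$, $\mathcal{H}=\{0\}$, where the prediction must reduce to $\tfrac1x N(x;q,a)$ as given by Theorem~\ref{thm-SOS-AP} — which pins the secondary coefficient as $c_{1,a}$ rather than $c_{1,a}-\tfrac12$ — together with the numerical comparison recorded in Table~\ref{table:CK_5_a_h=1-first}.
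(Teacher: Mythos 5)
Your proposal is correct and matches the paper's approach: the statement is a conjecture with no proof in the paper, motivated exactly as you describe, by replacing the crude density $K/\sqrt{\log n}$ in Conjecture~\ref{SOS-HL} with the class-sensitive density read off from Theorem~\ref{thm-SOS-AP}. Your bookkeeping of the $-\tfrac{k}{2}$ from differentiating the main term against the $+\tfrac{k}{2}$ from the partial summation is the right way to see why the secondary coefficient is $\sum_{h\in\mathcal{H}}c_{1,h+a}$ and not $\sum_{h}(c_{1,h+a}-\tfrac12)$, and the $k=1$ consistency check with Theorem~\ref{thm-SOS-AP} confirms it.
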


\begin{table}[htb!]
\begin{tabular}{cc|ccc|cc}
\hline 
\makecell{$a$} & \makecell{$h$} & \makecell{$x R_k(\mathcal{H}; x, q, a) $} & \makecell{Main term} & \makecell{Main and secondary term} & \makecell{$\text{Err}_1$} & \makecell{$\text{Err}_2$}\\
\hline
\hline
$0$ & $1$ & 3 906 419 030 & 3 619 120 683 & 3 850 620 130 & 1.0794 & 1.0145 \\
$1$ & $1$ & 3 751 339 794 & 3 619 120 683 & 3 718 867 172 & 1.0365 & 1.0087 \\
$1$ & $2$ & 1 925 818 092 & 1 809 560 341 & 1 859 433 586 & 1.0642 & 1.0357 \\
$0$ & $5$ & 4 062 607 000 & 3 619 120 682 & 3 982 373 088 & 1.1225 & 1.0201 \\
\hline
\end{tabular}
\caption{Numerical data versus Conjecture~\ref{SOS-HL-refined} for $\mathcal{H}= \left\{ 0, h \right\}$, $x=10^{12}$, $q = 5$. The third column shows the numerical data, the 4-th and 5-th columns show the product of $x$ and the prediction of Conjecture~\ref{SOS-HL-refined} with the main term, and with the main and first secondary term respectively. The last two columns show their percentage errors, respectively.}
\label{table:CK_5_a_h=1-first}
\end{table}

Finally, we need an equivalent form of Conjecture~\ref{SOS-HL}, inspired by the work of Montgomery and Soundararajan \cite{MS} for the case of primes, namely
\begin{equation} \label{def-S0q} 
   \frac{1}{x}  \sum_{\substack{n \leq x \\ n \equiv a \mod q}}\prod_{h\in\mathcal{H}}\bigg(  \mathbf{1}_{\mathbf{E}}(n+h)
    -\frac{K}{\sqrt{\log n}}\bigg) \sim  \frac{\mathfrak{S}_{0}(\mathcal{H})}{q} \bigg(\frac{K}{\sqrt{\log x}}\bigg)^{|\mathcal{H}|} .
\end{equation}
Assuming that Conjecture~\ref{SOS-HL} holds, we get relations between the constants ${\mathfrak{S}_{0}(\mathcal{H})}$ and ${\mathfrak{S}(\mathcal{H})}$, and 
it is easy to see that
\begin{eqnarray*}
	\mathfrak{S}_{0}(\emptyset) &=&1 \\
	\mathfrak{S}_{0}(\left\{ h\right\} ) &=&0 \\
	\mathfrak{S}_{0}(\left\{ h_1, h_2 \right\} ) &=&  \mathfrak{S}(\left\{ h_1, h_2 \right\} ) - 1,
\end{eqnarray*}
and that for a general set $\mathcal{H}$,
\begin{equation}\label{altdef-S0q}
	\mathfrak{S}_{0}(\mathcal{H})  = \sum_{\mathcal{T}\subseteq\mathcal{H}} (-1)^{\lvert \mathcal{H} \smallsetminus \mathcal{T} \rvert}\mathfrak{S}(\mathcal{T}).
\end{equation}
Mirroring \cite{MS}, we prove in Section~\ref{Section proof prop bound for D with large sets} the following result, which is critical to justify our heuristic.

 \begin{theorem}\label{Prop bound for D with large sets}
	Let $\mathfrak{S}_0(\mathcal{H})$ the constants defined by~\eqref{altdef-S0q}. Then, for any $k \geq 1$ and $\varepsilon >0$, we have
	\begin{align*}
		\sum_{\substack{ \mathcal{H} \subseteq [1,h] \\ \lvert\mathcal{H}\rvert = k}}\mathfrak{S}_0(\mathcal{H}) \ll_{k,\varepsilon} h^{\frac{k}{2}+\varepsilon}.
	\end{align*}
\end{theorem}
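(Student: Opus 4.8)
The plan is to follow the strategy of Montgomery and Soundararajan \cite{MS} for the prime case, adapted to sums of two squares. The starting point is the multiplicative structure of $\mathfrak{S}_0(\mathcal{H})$: by~\eqref{altdef-S0q} and the fact that $\mathfrak{S}(\mathcal{T}) = \prod_{p \not\equiv 1 \bmod 4} \delta_{\mathcal{T}}(p)/(\delta_{\{0\}}(p))^{|\mathcal{T}|}$ is an Euler product, one expands $\mathfrak{S}_0(\mathcal{H}) = \sum_{\mathcal{T}\subseteq\mathcal{H}}(-1)^{|\mathcal{H}\setminus\mathcal{T}|}\prod_p (\cdots)$ and rearranges into an Euler-type product over primes $p \not\equiv 1 \bmod 4$ of local factors that vanish unless the elements of $\mathcal{H}$ are suitably clustered modulo powers of $p$. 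Concretely, writing $\psi(p) = \delta_{\{0\}}(p)^{-1}$ and $\epsilon_h(p^\alpha) = \mathbf{1}[h \equiv \square+\square \bmod p^\alpha] - 1$ or a similar ``localized'' weight, one obtains that $\mathfrak{S}_0(\mathcal{H})$ is a sum over ways of partitioning the difference-set of $\mathcal{H}$ that forces, for each prime appearing, a nontrivial congruence constraint. The upshot—exactly as in \cite{MS}, Lemma 3 and surrounding—is a bound of the shape $|\mathfrak{S}_0(\mathcal{H})| \ll_{k} \sum (\text{over factorizations})$, and summing over $\mathcal{H} \subseteq [1,h]$ with $|\mathcal{H}| = k$ converts this to a sum over the number-theoretic data (moduli $d$, residues) of quantities like $(h/d + 1)^{\#\text{classes}}$.

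The key analytic input, replacing the prime-counting estimates used by \cite{MS}, will be control of sums of $\delta_{\{0\}}(p)$ and its reciprocal over primes $p \not\equiv 1 \bmod 4$—equivalently, control of $\sum_{p \equiv 3 \bmod 4, p \leq y} 1/p$ and related quantities via Mertens-type estimates (or the $L(s,\chi_4)$ machinery), since $\delta_{\{0\}}(p) = (1+1/p)^{-1}$ or similar for $p \equiv 3 \bmod 4$ and $\delta_{\{0\}}(2)$ is a fixed constant. One organizes the $\mathcal{H}$-sum by the "type" of the singular-series expansion: each term corresponds to a choice of squarefree (or prime-power) modulus $d$ and a partition of $\mathcal{H}$ into blocks that collapse modulo $d$; the number of $k$-subsets of $[1,h]$ compatible with such a collapse is $\ll (h/d)^{k - (\text{number of collapsed pairs})} \cdot (\text{combinatorial factor})$, and the singular-series weight contributes a factor decaying like $d^{-1}$ per collapsed pair. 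Balancing these, the worst case is when roughly half the points are paired up at scale $d \asymp h^{?}$, and the $d$-sum converges (with the $\varepsilon$ absorbing the divisor-function losses $\tau_k(d)$ and the boundary of summation), yielding the exponent $k/2$.

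The main obstacle, I expect, is establishing the right pointwise/averaged bound on the local factors of $\mathfrak{S}_0(\mathcal{H})$—that is, proving the sum-over-$\mathcal{T}$ telescoping genuinely produces cancellation forcing at least one nontrivial prime constraint per "coordinate", and quantifying how the product of local savings interacts with the combinatorics of which subsets of $[1,h]$ are counted. In the prime case this is the heart of \cite{MS}; for sums of two squares the $\delta_{\mathcal{H}}(p)$ are more delicate because $a + h \equiv \square + \square \bmod p^\alpha$ depends on $\alpha$ (the stabilization as $\alpha \to \infty$ must be tracked), so one must verify that the relevant local densities still satisfy $\delta_{\{0\}}(p) - \delta_{\{0,h\}}(p) \ll p^{-1}$ type bounds uniformly, using the explicit formula~\eqref{connors-keating-normalized} for the two-point case and its analogues. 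Once that input is in place, the remaining summation is a multiplicative-number-theory estimate of the kind handled routinely, and the $h^{k/2+\varepsilon}$ bound follows; note that—unlike \cite{MS}—no appeal to GRH is needed here, since we only seek the crude exponent $k/2$ rather than a secondary main term.
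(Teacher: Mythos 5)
Your high-level plan (adapt Montgomery--Soundararajan, expand the singular series Hardy--Littlewood style, sum over $\mathcal{H}$ and control the resulting arithmetic sums) is the same as the paper's, and you correctly flag the genuinely new difficulty for sums of two squares, namely that the local densities $\delta_{\mathcal{H}}(p)$ involve all powers of $p$, so the expansion runs over all integers rather than squarefree moduli. However, there is a real gap at the heart of the argument: you never produce a mechanism that actually yields the exponent $k/2$, and the tools you nominate as the ``key analytic input'' are not the right ones. Pointwise bounds of the type $\delta_{\{0\}}(p)-\delta_{\{0,h\}}(p)\ll p^{-1}$, or Mertens-type control of $\sum_{p\equiv 3\,(4)}1/p$, cannot suffice: individual values $\mathfrak{S}_0(\{0,h\})=\mathfrak{S}(\{0,h\})-1$ are generically of size $\asymp 1$, so any argument that at some stage sums absolute values of $\mathfrak{S}_0(\mathcal{H})$ over the $\binom{h}{k}$ sets gives only $h^k$. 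The saving to $h^{k/2}$ comes entirely from cancellation in the sum over $\mathcal{H}$, and this cancellation is only visible after writing $\delta_{\mathcal{D}}(p)/\delta_{\{0\}}(p)^k$ as a series of Ramanujan-type exponential sums $A_{\mathcal{D}}(q_1,\dots,q_k)$ (Lemma~\ref{lemma-HL}, which requires an explicit Fourier expansion of the indicator of sums of two squares modulo $p^{\alpha}$ and a limit argument in $\alpha$ --- a computation absent from your sketch), then summing the phases $e(a_id_i/q_i)$ over $d_i\in[1,h]$ to get the kernels $F_h(a_i/q_i)=\min(h,\lVert a_i/q_i\rVert^{-1})$.

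Even granting that expansion, your residue-class counting picture (``the number of $k$-subsets compatible with a collapse mod $d$ is $\ll (h/d)^{k-\#\text{pairs}}$'') does not handle the actual constraint, which is the linked condition $\sum_i a_i/q_i\in\Z$ with $(a_i,q_i)=1$ tying all coordinates together; this is precisely why the paper (following \cite{MS}) invokes the Montgomery--Vaughan basic inequality (Lemma~\ref{Lem MV basic ineq}) to decouple the coordinates at the cost of $1/[q_1,\dots,q_k]$, and then the $\ell^2$ bound \eqref{bound sum F squared} to extract $h^{1/2}$ per free coordinate (Corollary~\ref{Lem bound V}); the diagonal contributions where coordinates coincide must be separated combinatorially into pairs contributing $h$ each (Lemma~\ref{Lem analogue of MS60}), every term $j$ then contributing $h^{j}\cdot h^{(k-2j)/2+\varepsilon}=h^{k/2+\varepsilon}$ --- there is no single ``worst case'' to balance. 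Without these two inputs (the prime-power exponential-sum expansion and the basic inequality), the proposal does not close.
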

Note that our result is weaker than the result of Montgomery and Soundararajan who computed an aymptotic for the average of Theorem~\ref{Prop bound for D with large sets} in the case of primes \cite[Theorem~2]{MS}. We did not pursue that as Theorem~\ref{Prop bound for D with large sets} is sufficient for our application.

 	\section{Heuristic for the conjecture}  \label{section-heuristic-1}
	
	We now develop a heuristic leading to Conjecture~\ref{main-conjecture-intro} following \cite{LOS}.
	Let $q \equiv 1 \pmod{4}$ be a prime, and we recall that $$N(x;q,(a,b)) = \# \lbrace E_{n} \leq x : E_n \equiv a \pmod{q}, E_{n+1} \equiv b \pmod{q}\rbrace.$$
	We first write
	\begin{align} \label{counting-consecutive}
	N(x; q,(a,b)) = \sum_{\substack{n \leq x \\ n \equiv a \pmod{q}}} \sum_{\substack{h>0 \\ h \equiv b-a \pmod{q}}} \mathbf{1}_{\mathbf{E}}(n) \mathbf{1}_{\mathbf{E}}(n+h) \prod_{t = 1}^{h-1} (1 - \mathbf{1}_{\mathbf{E}}(n+t)).
	\end{align}
We introduce the notation $$\widetilde{ \mathbf{1}}_{\mathbf{E}}(n)= \mathbf{1}_{\mathbf{E}}(n)-\frac{K}{\sqrt{\log n}},$$
and for each fixed $h$ in~\eqref{counting-consecutive}, we study the sum
\begin{eqnarray*}
S_h :=  \sum_{\substack{n \leq x \\ n \equiv a \mod q}}  
 \bigg(\frac{K}{\sqrt{\log n}}+\widetilde{ \mathbf{1}}_{\mathbf{E}}(n)\bigg)\bigg(\frac{K}{\sqrt{\log{(n+h)}}}+
\widetilde{ \mathbf{1}}_{\mathbf{E}}(n+h) 
 \bigg) 
  \prod_{0<t<h}\bigg( 1 -\frac{K}{\sqrt{\log{(n+t)}}}- \widetilde{ \mathbf{1}}_{\mathbf{E}}(n+t) \bigg).
\end{eqnarray*}
If we ignore the small differences among $\sqrt{\log{n}}$, $\sqrt{\log(n+h)}$, and $\sqrt{\log{(n+t)}}$ and we expand out the product, we get
\begin{align*} 
S_h=    \sum_{\mathcal{A} \subset \lbrace 0,h \rbrace} \sum_{\mathcal{T} \subset [ 1,h-1 ]} (-1)^{|\mathcal{T}|}\sum_{\substack{n \leq x \\ n \equiv a \mod q}} \bigg(\frac{K}{\sqrt{\log n}}\bigg)^{2-|\mathcal{A}|}\prod_{\substack{t\in[1,h-1] \\ t \not\in \mathcal{T}}} \bigg( 1 - \frac{K}{\sqrt{\log n}} \bigg) \prod_{\substack{t\in \mathcal{A \cup T}}} \widetilde{ \mathbf{1}}_{\mathbf{E}}(n+t)
\end{align*}
Finally,  denoting
$$\alpha(n) = 1-\frac{K}{\sqrt{\log n}},$$ 
and using~\eqref{def-S0q}, we conjecture that
\begin{align*}
  S_h &=  \sum_{\mathcal{A} \subset \lbrace 0,h \rbrace} \sum_{\mathcal{T} \subset [ 1,h-1 ]} (-1)^{|\mathcal{T}|}\sum_{\substack{n \leq x \\ n \equiv a \mod q}} \bigg(\frac{K}{\sqrt{\log(n)}}\bigg)^{2-|\mathcal{A}|} \alpha(n)^{{h -1 - | \mathcal{T}|}}
    \prod_{\substack{t\in \mathcal{A \cup T}}}\widetilde{\mathbf{1}}_{\mathbf{E}}(n+t)\\
&\sim \frac{x}{q} \sum_{\mathcal{A} \subset \lbrace 0,h \rbrace} \sum_{\mathcal{T} \subset [ 1,h-1 ]} (-1)^{|\mathcal{T}|}\mathfrak{S}_{0} (\mathcal{A} \cup \mathcal{T})  \bigg(\frac{K}{\sqrt{\log x}}\bigg)^{2+|\mathcal{T}|} \alpha(x)^{h -1-\lvert\mathcal{T}\rvert}.  \end{align*}
We emphasize that this is a heuristic argument: in obtaining this expression for $S_h$, we have not paid attention to the error terms in~\eqref{def-S0q}, in particular on the
  dependency on the size of the sets $\mathcal{A} \cup \mathcal{T}$ and on $h$.

Summing $S_h$ over all $h \equiv b-a \mod q$, this gives the conjectural estimate
\begin{equation} \label{total-count-D}
  N(x;q,(a,b)) \sim \frac{x}{q}  {\alpha(x)^{-1}}  \bigg( \frac{K}{\sqrt{\log x}} \bigg)^{2}  \mathcal{D}(a,b;x),
\end{equation}
where
\begin{equation} \label{first-D}
  \mathcal{D}(a,b; x) = \sum_{\substack{h>0 \\ h \equiv b-a \mod q}} \sum_{\mathcal{A} \subset \lbrace 0 , h \rbrace} \sum_{\mathcal{T}\subset [1,h-1]}(-1)^{|\mathcal{T}|} \mathfrak{S}_{0} (\mathcal{A} \cup \mathcal{T}) \bigg( \frac{K}{\alpha(x) \sqrt{\log x}} \bigg)^{|\mathcal{T}|} \alpha(x)^{{h}}.
\end{equation}
In order to evaluate~\eqref{first-D}, we will use the following notations. Let
\begin{align} \label{changeofvariables}
	\alpha(x)^h = \left( 1 - \frac{K}{\sqrt{\log{x}}} \right)^h = e^{-h/H} \iff H = - \frac{1}{\log{\alpha(x)}},\end{align}  which implies that
\begin{eqnarray*}
	H &=& \frac{\sqrt{\log{x}}}{K} - \frac12 + O \left( (\log x)^{-1/2} \right)\\
	\log{H} &=& \frac{1}{2} \log{\log{x}} - \log{K} + O \left((\log x)^{-1/2} \right).
\end{eqnarray*}

\subsection{Discarding the singular series involving larger sets}
We approximate  $\mathcal{D}(a,b;x) $ by discarding all the singular series where $\mathcal{A} \cup \mathcal{T}$ has more than 2 elements, which is justified by 
Theorem~\ref{Prop bound for D with large sets}. We separate in 3 cases, depending on the possible choices for the set $\mathcal{A} \subseteq \{ 0, h \}$.
We use the notation defined in~\eqref{changeofvariables} for $H$, and the bound $\sum\limits_{\substack{ h>0 \\h \equiv v \mod q}} \alpha(x)^h h^{\ell} \ll_{\ell} H^{\ell+1}$ for any $\ell \geq 0$, and $v \in \Z$.

If $\mathcal{A} = \emptyset$, then
for $k\geq3$, we deduce from Theorem~\ref{Prop bound for D with large sets} that
\begin{align*}
	\sum_{\substack{h>0 \\ h \equiv b-a \mod q}} & \sum_{\substack{\mathcal{T}\subset [1,h-1] \\ \lvert\mathcal{T}\rvert =k}} \mathfrak{S}_{0} (\mathcal{T}) \bigg( \frac{K}{\alpha(x) \sqrt{\log x}} \bigg)^{k} \alpha(x)^{{h}}
	\ll_{k} 
	\bigg( \frac{K}{\alpha(x) \sqrt{\log x}} \bigg)^{k} \sum_{\substack{h>0 \\ h \equiv b-a \mod q}} h^{\frac{k}{2}+\varepsilon}  \alpha(x)^{{h}} \\
	&\ll_{k} \bigg( \frac{K}{\alpha(x) \sqrt{\log x}} \bigg)^{k} H^{1  +\frac{k}{2}+\varepsilon} 
	\ll_{k} (\log x)^{-\frac{k}{4} + \frac{1}{2}  + \varepsilon}.
\end{align*}

If $\mathcal{A} = \{ h \}$ and $\lvert\mathcal{A} \cup \mathcal{T}\rvert \geq 3$, we have for 
$k \geq 2$
\begin{align*}
	\sum_{\substack{h>0 \\ h \equiv b-a \mod q}}& \sum_{\substack{\mathcal{T}\subset [1,h-1] \\ \lvert\mathcal{T}\rvert =k}} \mathfrak{S}_{0} (\mathcal{T}\cup\lbrace h \rbrace) \bigg( \frac{K}{\alpha(x) \sqrt{\log x}} \bigg)^{k} \alpha(x)^{h} \\
	&\approx_{k} \frac{1}{q}
	\bigg( \frac{K}{\alpha(x) \sqrt{\log x}} \bigg)^{k} \sum_{\substack{\mathcal{D}\subset [1,H] \\ \lvert\mathcal{D}\rvert =k+1}} \mathfrak{S}_{0} (\mathcal{D}) 
	\ll_{k} \frac{1}{q}\bigg( \frac{K}{\alpha(x) \sqrt{\log x}} \bigg)^{k} H^{\frac{k+1}{2} + \varepsilon} 
	\ll_{k} \frac{1}{q} (\log x)^{-\frac{k}{4} + \frac{1}{4}  + \varepsilon},
\end{align*}
where we are approximating the sum over $h$ and ${\mathcal{T}}$ of the first line as the sum over all $\mathcal{D}$ of size $k+1$ contained in $[1,H]$, which we then bound by Theorem~\ref{Prop bound for D with large sets}.
We obtain the same bound for $\mathcal{A} = \{ 0 \}$ using the fact that $\mathfrak{S}_{0}$ is invariant by translation.

Finally, in the case $\mathcal{A} = \lbrace 0,h\rbrace$, we introduce an extra average. Since  $\mathfrak{S}_{0}$ is translation invariant, we have
$$ \sum_{s\geq 1} \mathfrak{S}_{0} ( \lbrace s, t_1+s, \dots, t_k+s, h+s \rbrace)e^{-{s}/{H}}
= \mathfrak{S}_{0} ( \lbrace 0, t_1, \dots, t_k, h \rbrace)\sum_{s\geq 1} e^{-{s}/{H}} \approx \mathfrak{S}_{0} ( \lbrace 0,  t_1, \dots, t_k, h \rbrace) H,$$ and using this, we get for 
 $k\geq 1$
\begin{align*}
	\bigg( \frac{K}{\alpha(x) \sqrt{\log x}} \bigg)^{k}&\sum_{\substack{h>0 \\ h \equiv b-a \mod q}} \sum_{\substack{\mathcal{T}\subset [1,h-1] \\ \lvert\mathcal{T}\rvert =k}} \mathfrak{S}_{0} (\mathcal{T} \cup \lbrace 0, h \rbrace) \alpha(x)^{h} \\
	&\approx \frac{1}{q H} 	\bigg( \frac{K}{\alpha(x) \sqrt{\log x}} \bigg)^{k} \sum_{s\geq 1}\sum_{h\geq 1} \sum_{0< t_1 <\dots < t_k <h} \mathfrak{S}_{0} ( \lbrace s, t_1+s, \dots, t_k+s, h+s \rbrace)e^{-{(s+h)}/{H}} \\
	&\approx \frac{1}{q H} \bigg( \frac{K}{\alpha(x) \sqrt{\log x}} \bigg)^{k}\sum_{0<s< t'_1 <\dots < t'_k <h'<2H} \mathfrak{S}_{0} ( \lbrace s, t'_1, \dots, t'_k, h' \rbrace) \\ 
	&\ll_k \frac{1}{q}\bigg( \frac{K}{\alpha(x) \sqrt{\log x}} \bigg)^{k} (2H)^{-1 + \frac{k+2}{2} + \varepsilon} \ll_{k} \frac{1}{q} (\log x)^{-\frac{k}{4}  + \varepsilon}.
\end{align*}

Discarding all the singular series where $\mathcal{A} \cup \mathcal{T}$ has more than 2 elements from~\eqref{first-D}, and working again heuristically by ignoring the dependence on $\lvert \mathcal{A} \cup \mathcal{T}\rvert$ in the error terms, we are led to the model

\begin{equation*} 
  \mathcal{D}(a,b; x) = (\mathcal{D}_0+\mathcal{D}_1+\mathcal{D}_2)(a,b;x) + O_{\varepsilon}((\log x)^{-\frac{1}{4} + \varepsilon}),
\end{equation*}
 where
\begin{equation*}
  \mathcal{D}_0(a,b;x) =  \sum_{\substack{h>0 \\ h \equiv b-a \mod q}} \bigg(1+ \mathfrak{S}_{0} (\lbrace 0,h \rbrace) \bigg) \alpha(x)^{{h}}
\end{equation*}
\begin{equation*}
  \mathcal{D}_1(a,b;x) = - \bigg( \frac{K}{\alpha(x) \sqrt{\log x}} \bigg) \sum_{\substack{h>0 \\ h \equiv b-a \mod q}} \sum_{t \in [1,h-1]} \bigg(\mathfrak{S}_{0} (\lbrace 0,t \rbrace) + \mathfrak{S}_{0} (\lbrace t,h \rbrace)\bigg) \alpha(x)^{{h}}
\end{equation*}
\begin{equation*}
  \mathcal{D}_2(a,b;x) =  \bigg( \frac{K}{\alpha(x) \sqrt{\log x}} \bigg)^2 \sum_{\substack{h>0 \\ h \equiv b-a \mod q}} \sum_{1 \leq t_1 < t_2 < h} \bigg(\mathfrak{S}_{0} (\lbrace t_1,t_2 \rbrace)\bigg) \alpha(x)^{{h}}.
\end{equation*}
Replacing in~\eqref{total-count-D}, we then conjecture that up to 
error term of order $x(\log x)^{-\frac{5}{4} + \varepsilon}$, we have
\begin{equation} \label{total-count-D012}
  N(x;q,(a,b)) \sim \frac{x}{q}   {\alpha(x)^{-1}}  \bigg( \frac{K}{\sqrt{\log x}} \bigg)^{2}  (\mathcal{D}_0 + \mathcal{D}_1+  \mathcal{D}_2)(a,b;x) .
\end{equation}

\subsection{Evaluation of the sums of singular series involving sets of size $2$}\label{section-heuristic-2}

In order to evaluate~\eqref{total-count-D012}, we first evaluate the simple exponential sums.
We will use the notation
$$
f(v; q) := \begin{cases} - \frac{1}{2} & v = 0  \\  \frac{q - 2v}{2q} & 1 \leq v \leq q-1 \end{cases}
$$
which gives
\begin{eqnarray*} 
E(H) &:=& 	
\sum_{h>0 } e^{-h/H} = H - \frac{1}{2} + O(H^{-1}) 
= \frac{\sqrt{\log{x}}}{K}- 1  
+ O \left( ( \log{x})^{-1/2} \right) \\
E(q,v;H) &:=&
\sum_{\substack{h>0 \\ h \equiv v \mod q}} e^{-h/H} = \frac{H}{q} + f(v; q) + O(H^{-1}) 
= \frac{\sqrt{\log{x}}}{Kq} + f(v; q)  - \frac{1}{2q}
+ O \left( ( \log{x})^{-1/2} \right) .
\end{eqnarray*}
Let 
\begin{equation}  \label{eq def S(q,v;H)-S0(q,v;H)} \begin{split}
S(q,v; H) &:= \sum_{\substack{h \geq 1 \\ h \equiv v \mod q}} \mathfrak{S}(\{ 0,h \})e^{-h/H} 
\\
S_0(q,v;H) &:= \sum_{\substack {h \geq 1\\h \equiv v \mod q}}  \mathfrak{S}_{0} (\lbrace 0,h \rbrace) e^{-h/H} = 
\sum_{\substack {h \geq 1 \\h \equiv v \mod q}}  (\mathfrak{S}(\lbrace 0,h \rbrace) -1)e^{-h/H}. 
\end{split} \end{equation}
and 
\begin{align*} S(H) &:= \sum_{h \geq 1} \mathfrak{S}(\{ 0,h \})e^{-h/H} = \sum_{v \mod q} S(q,v; H) \\
	S_0(H) &:= \sum_{h \geq 1}  \mathfrak{S}_{0} (\lbrace 0,h \rbrace) e^{-h/H} 
	= \sum_{v \mod q} S_0(q,v; H). 
	\end{align*}
We then have
\begin{equation} \label{S0H} \begin{split}
S_0(q,v; H) &= S(q,v; H) - \frac{H}{q} - f(v;q) + O(H^{-1})\\
S_0(H) &= S(H) - H + \frac12 + O(H^{-1}).
\end{split} \end{equation}

Using Theorem~\ref{Intro prop2.1}, we evaluate $\mathcal{D}_{0}(a,b;x)$, $\mathcal{D}_{1}(a,b;x)$ and $\mathcal{D}_{2}(a,b;x)$.
\begin{proposition}\label{Prop estim D012} Let $q \equiv 1 \mod 4$ be a prime. For $j \geq 1$, let $c(j)$ be the constants from Theorem~\ref{Intro prop2.1}. 
 Then, 
  \begin{align*} 
  		&\mathcal{D}_{0}(a,b;x) + \mathcal{D}_{1}(a,b;x) +\mathcal{D}_{2}(a,b;x) \\
  		 &= S(q, b-a; H) + 
		 \frac{2}{q K \pi}(\log H)^{1/2} - \frac1{2q} -  \frac{1}{q} \sum_{j=1}^J c(j) (\log{H})^{1/2-j} + O \left( (\log{H})^{-1/2-J} + \frac{\sqrt{\log{H}}}{\sqrt{\log{x}}} \right)
  	\end{align*} 
where we use the change of variables ~\eqref{changeofvariables}. We remark that the error term $(\log{H})^{-1/2-J}$ is the largest one, for any value of $J$.
	\end{proposition}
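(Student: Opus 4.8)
The plan is to evaluate $\mathcal{D}_0,\mathcal{D}_1,\mathcal{D}_2$ in turn, passing throughout to the variable $H$ of~\eqref{changeofvariables}, and to reduce each of them to the single weighted average $S_0(H)=\sum_{h\geq1}\mathfrak{S}_0(\{0,h\})e^{-h/H}$ (together with elementary exponential sums), for which Theorem~\ref{Intro prop2.1} and~\eqref{S0H} provide precise asymptotics. For $\mathcal{D}_0$ there is nothing to prove: since $1+\mathfrak{S}_0(\{0,h\})=\mathfrak{S}(\{0,h\})$ and $\alpha(x)^h=e^{-h/H}$, one has $\mathcal{D}_0(a,b;x)=\sum_{h\equiv b-a \mod q}\mathfrak{S}(\{0,h\})e^{-h/H}=S(q,b-a;H)$ exactly, so $\mathcal{D}_0$ contributes the leading term and everything else must come from $\mathcal{D}_1+\mathcal{D}_2$.

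For $\mathcal{D}_1$ and $\mathcal{D}_2$ I would first use translation invariance of $\mathfrak{S}_0$ to simplify the inner sums: $\sum_{t=1}^{h-1}\big(\mathfrak{S}_0(\{0,t\})+\mathfrak{S}_0(\{t,h\})\big)=2\sum_{t=1}^{h-1}\mathfrak{S}_0(\{0,t\})$ and $\sum_{1\leq t_1<t_2<h}\mathfrak{S}_0(\{t_1,t_2\})=\sum_{d=1}^{h-2}(h-1-d)\,\mathfrak{S}_0(\{0,d\})$. Writing $\beta:=K/(\alpha(x)\sqrt{\log x})=e^{1/H}-1$ and, for $w\in\Z$, $E(q,w;H)=\sum_{j\geq1,\,j\equiv w \mod q}e^{-j/H}$, $E'(q,w;H)=\sum_{j\geq1,\,j\equiv w \mod q}j\,e^{-j/H}$, interchanging the order of summation (all sums converge absolutely) gives
\begin{align*}
\mathcal{D}_1 &= -2\beta\sum_{t\geq1}\mathfrak{S}_0(\{0,t\})\,e^{-t/H}\,E(q,b-a-t;H), \\
\mathcal{D}_2 &= \beta^2 e^{-1/H}\sum_{d\geq1}\mathfrak{S}_0(\{0,d\})\,e^{-d/H}\,E'(q,b-a-d-1;H).
\end{align*}

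Next I would expand the elementary ingredients in descending powers of $H$: $\beta=H^{-1}+O(H^{-2})$, $\beta^2 e^{-1/H}=e^{1/H}+e^{-1/H}-2=H^{-2}+O(H^{-4})$, and, by summing geometric series, $E(q,w;H)=H/q+O(1)$ and $E'(q,w;H)=H^2/q+O(H)$ uniformly in $w$, where the $O(1)$ and $O(H)$ terms are, up to a tail $O(H^{-1})$, expressions depending only on $w\bmod q$. The $H/q$ and $H^2/q$ parts, multiplied by $\mathfrak{S}_0(\{0,t\})e^{-t/H}$ and summed, produce $\tfrac Hq S_0(H)$ and $\tfrac{H^2}q S_0(H)$; after the factors $\beta$ and $\beta^2 e^{-1/H}$ these become $-\tfrac2q S_0(H)$ and $+\tfrac1q S_0(H)$ respectively, up to $O(|S_0(H)|/H)$. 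Each remaining coefficient of $E$ or $E'$ is constant on residue classes modulo $q$, so it pairs with the $\mathfrak{S}_0$-sum to give a fixed combination of the averages $S_0(q,w;H)$, which by~\eqref{S0H} and Theorem~\ref{Intro prop2.1} are $O(\sqrt{\log H})$ each; multiplied by the relevant power of $H$ and then by $\beta$ (resp.\ $\beta^2 e^{-1/H}$) these are $O(\sqrt{\log H}/H)$, hence negligible. The $O(H^{-1})$ tails are controlled by the crude bound $\sum_{h\geq1}|\mathfrak{S}_0(\{0,h\})|e^{-h/H}\ll_{\varepsilon}H^{1+\varepsilon}$, which follows from~\eqref{connors-keating-normalized} together with $n/\phi(n)\ll\log\log n$. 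Since $H\asymp\sqrt{\log x}$, all these errors are $\ll(\log H)^{-1/2-J}$ for any fixed $J$, so $\mathcal{D}_1=-\tfrac2q S_0(H)+O(H^{-1+\varepsilon})$ and $\mathcal{D}_2=\tfrac1q S_0(H)+O(H^{-1+\varepsilon})$.

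Adding the three contributions gives $\mathcal{D}_0+\mathcal{D}_1+\mathcal{D}_2=S(q,b-a;H)-\tfrac1q S_0(H)+O(H^{-1+\varepsilon})$, and substituting $S_0(H)=S(H)-H+\tfrac12+O(H^{-1})$ from~\eqref{S0H} together with $S(H)=H-\tfrac{2}{K\pi}(\log H)^{1/2}+\sum_{j=1}^J c(j)(\log H)^{1/2-j}+O((\log H)^{-1/2-J})$ from Theorem~\ref{Intro prop2.1} turns $-\tfrac1q S_0(H)$ into exactly $\tfrac{2}{qK\pi}(\log H)^{1/2}-\tfrac1{2q}-\tfrac1q\sum_{j=1}^J c(j)(\log H)^{1/2-j}+O((\log H)^{-1/2-J})$, which is the claim (the extra $\sqrt{\log H}/\sqrt{\log x}$ error is what one picks up when expressing $\beta$ and $H$ back through $\log x$ via~\eqref{changeofvariables}). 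The main obstacle is the bookkeeping of the previous paragraph: after interchanging summations one must resist bounding the bounded-in-$H$ parts of $E$ and $E'$ trivially — a trivial bound leaves an $O(H^{\varepsilon})$ term, which is fatal — and instead route them through the averages $S_0(q,w;H)$; and one must verify that the only surviving additive constant is the $-\tfrac1{2q}$, which is forced, via $S_0(H)$, by the elementary identity $\sum_{h>0}e^{-h/H}=H-\tfrac12+O(H^{-1})$.
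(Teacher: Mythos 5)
Your proposal is correct and follows essentially the same route as the paper: identify $\mathcal{D}_0$ with $S(q,b-a;H)$ exactly, interchange the order of summation in $\mathcal{D}_1$ and $\mathcal{D}_2$ to reduce each to $S_0(H)$ times an elementary exponential sum (yielding $-\tfrac2q S_0(H)$ and $+\tfrac1q S_0(H)$ up to admissible errors), and then invoke Theorem~\ref{Intro prop2.1} together with~\eqref{S0H}. Your extra care in routing the residue-class-dependent corrections to $E$ and $E'$ through the averages $S_0(q,w;H)$ rather than bounding them trivially is a valid and slightly more explicit justification of the step the paper records as $\left(\tfrac{H}{q}+O(1)\right)S_0(H)$.
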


\begin{proof}
First, notice that $\mathcal{D}_{0}(a,b;x) = S(q, b-a; H)$. 
For $\mathcal{D}_{1}(a,b;x)$, we first compute
  \begin{eqnarray} \label{formula sum D1}  \sum_{\substack {{h \geq 2}\\ h \equiv b-a \mod q}} 
  \sum_{1 \leq t \leq h-1}  \mathfrak{S}_{0} (\lbrace 0,t \rbrace) e^{-h/H} &=&
 \sum_{t \geq 1}  \mathfrak{S}_{0} (\lbrace 0,t \rbrace) e^{-t/H} \sum_{\substack {h \geq 1 \\h \equiv b-a-t \mod q}} e^{-h/H} \\
 &=& \left( \frac{H}{q} + O(1) \right) S_0(H), \nonumber
  \end{eqnarray}
 and \begin{eqnarray*}
  - \bigg( \frac{K}{\alpha(x) \sqrt{\log x}} \bigg) \sum_{\substack {h > 0\\ h \equiv b-a \mod q}} \sum_{1 \leq t \leq h-1}  \mathfrak{S}_{0} (\lbrace 0,t \rbrace) e^{-h/H} 
  = \left( -\frac{1}{q} + O\left(  \frac{1}{\sqrt{\log{x}}} \right) \right)  {S}_{0} (H).
  \end{eqnarray*}
  We get a similar estimate for the second sum in  $\mathcal{D}_{1}(a,b;y)$ involving $\mathfrak{S}_{0} (\lbrace t, h \rbrace)$ by making a change of variable to replace it by $\mathfrak{S}_{0} (\lbrace 0, r \rbrace)$ with $r=h-t$, which gives
  \begin{eqnarray*} 
  \mathcal{D}_{1}(a,b;x) 
  &=& {\left( -\frac{2}{q} + O\left( (\log{x})^{-1/2} \right) \right)S_0(H)} .
  \end{eqnarray*} 
 Similarly, for  $\mathcal{D}_{2}(a,b; x)$,  we first compute
    \begin{align}
 & \sum_{\substack {h \geq 3\\ h \equiv b-a \mod q}} \sum_{1 \leq t_1<t_2 < h}  \mathfrak{S}_{0} (\lbrace t_1,t_2 \rbrace) e^{-h/H} \nonumber \\
  &=
   \sum_{1 \leq t_1<t_2 }  \mathfrak{S}_{0} (\lbrace 0, t_2-t_1 \rbrace) \sum_{\substack{h \equiv b-a \mod q \\ h \ge t_2 +1}} e^{-h/H}
  = \sum_{r \ge 1}  \mathfrak{S}_{0} (\lbrace 0, r \rbrace) \sum_{\substack{t_2 \ge r+1}} e^{-t_2/H} \sum_{\substack{h' \geq 1\\ h' \equiv b-a - t_2 \mod q}} e^{-h'/H}  \nonumber \\
    &= \sum_{r \ge 1}  \mathfrak{S}_{0} (\lbrace 0, r \rbrace) e^{-r/H} \sum_{\substack{t_2' \geq 1}} e^{-t_2'/H} \sum_{\substack{h' \geq 1\\ h' \equiv b-a - t_2' {-r} \mod q}} e^{-h'/H}
    = { \left(\frac{H^2}{q} + O(H) \right)  S_0(H),}  \label{formula sum D2}
  \end{align}
  and replacing in the definition of $\mathcal{D}_{2}(a,b;x)$, we have
  \begin{equation*} 
  \mathcal{D}_{2}(a,b;x) 
   = \left( \frac{1}{q} + O \left( (\log x)^{-1/2} \right) \right) S_0(H).
  \end{equation*}
  Using Theorem~\ref{Intro prop2.1} and~\eqref{S0H} to evaluate $S_0(H)$, this completes the proof.
 	\end{proof}
	
One can be more precise regarding the dependence on the congruence classes by separating the sum over~$t$ in~\eqref{formula sum D1} and the sum over~$r$ in~\eqref{formula sum D2} in congruence classes modulo $q$. In particular, the following refinement of Proposition~\ref{Prop estim D012} will be used for numerical testing. The proof follows directly from the proof of Proposition
\ref{Prop estim D012}, and we omit it.
	\begin{proposition} \label{Prop D012 numerical} Let $q \equiv 1 \mod 4$ be a prime. Then
  \begin{align*}
  &\mathcal{D}_{0}(a,b;x) + \mathcal{D}_{1}(a,b;x)  + \mathcal{D}_{2}(a,b;x) \\
  &= E(q, b-a; H) + S_0(q, b-a; H) - 2 \frac{K}{\alpha(x) \sqrt{\log{x}}} \sum_{c \mod q} S_0(q, b-a-c; H) E(q,c;H) \\ &+ \left( \frac{K}{\alpha(x) \sqrt{\log{x}}} \right)^2 \sum_{c, d \mod q} S_0(q, b-a-c-d; H) E(q,c; H) E(q,d; H). 
\end{align*}
	\end{proposition}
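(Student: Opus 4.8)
The plan is to evaluate $\mathcal{D}_0$, $\mathcal{D}_1$ and $\mathcal{D}_2$ separately in closed form, following the same computation as in the proof of Proposition~\ref{Prop estim D012} but keeping track of the residue class modulo $q$ of every summation variable instead of collapsing the sums into $S_0(H)$. Since $\mathfrak{S}_0(\{0,h\}) = \mathfrak{S}(\{0,h\}) - 1$ is bounded (by~\eqref{connors-keating-normalized}) and every term carries a factor $\alpha(x)^{h} = e^{-h/H}$ by~\eqref{changeofvariables}, all the series in play converge absolutely, so the rearrangements below are legitimate and the resulting identity is exact. For $\mathcal{D}_0$ one simply splits the factor $1+\mathfrak{S}_0(\{0,h\})$: the constant part gives $\sum_{h>0,\,h\equiv b-a\,(\mathrm{mod}\,q)} e^{-h/H} = E(q,b-a;H)$, and the remaining part is $S_0(q,b-a;H)$ by the definitions in~\eqref{eq def S(q,v;H)-S0(q,v;H)}.

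For $\mathcal{D}_1$ I would treat the two inner sums separately. In $\sum_{h\equiv b-a}\sum_{1\le t\le h-1}\mathfrak{S}_0(\{0,t\})e^{-h/H}$, write $h = t+h'$ with $h'\ge 1$ and interchange the order of summation; the constraint becomes $h'\equiv b-a-t$, and since $\sum_{h'\ge 1,\,h'\equiv b-a-t}e^{-h'/H}$ depends on $t$ only through $t\bmod q$, grouping $t$ by its residue $c$ modulo $q$ turns this into $\sum_{c\bmod q}S_0(q,c;H)E(q,b-a-c;H)$. For the sum involving $\mathfrak{S}_0(\{t,h\})$, translation invariance of $\mathfrak{S}_0$ gives $\mathfrak{S}_0(\{t,h\}) = \mathfrak{S}_0(\{0,h-t\})$, and the substitution $r = h-t$ (which runs over the same interval $1\le r\le h-1$) shows it equals the first sum. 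Hence $\mathcal{D}_1 = -2\bigl(\tfrac{K}{\alpha(x)\sqrt{\log x}}\bigr)\sum_{c\bmod q}S_0(q,c;H)E(q,b-a-c;H)$, which after the substitution $c\mapsto b-a-c$ is exactly the third term of the statement.

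For $\mathcal{D}_2$ I would again use translation invariance to replace $\mathfrak{S}_0(\{t_1,t_2\})$ by $\mathfrak{S}_0(\{0,t_2-t_1\})$, then change variables to $u = t_1$, $r = t_2-t_1$, $h' = h-t_2$, all $\ge 1$, so that $h = u+r+h'$ and $e^{-h/H} = e^{-u/H}e^{-r/H}e^{-h'/H}$; the congruence $h\equiv b-a$ becomes $h'\equiv b-a-u-r$. Partitioning $u$ and $r$ into residue classes $c$ and $d$ modulo $q$ and identifying the three resulting factors with $E(q,c;H)$, $S_0(q,d;H)$ and $E(q,b-a-c-d;H)$ gives $\mathcal{D}_2 = \bigl(\tfrac{K}{\alpha(x)\sqrt{\log x}}\bigr)^2\sum_{c,d\bmod q}S_0(q,d;H)E(q,c;H)E(q,b-a-c-d;H)$, which is the last term after the reindexing $d\mapsto b-a-c-d$. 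Adding the three pieces yields the proposition.

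There is no real obstacle here: the argument is the same exact-rearrangement computation used for Proposition~\ref{Prop estim D012}, only with the congruence classes kept visible rather than summed away. The one point requiring care is the bookkeeping of the index ranges ($t\in[1,h-1]$ and $1\le t_1<t_2<h$) through the changes of variables, together with the final reindexings that bring each term to the symmetric convolution form written in the statement.
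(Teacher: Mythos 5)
Your proof is correct and takes essentially the same route the paper intends: it is the exact, residue-class-by-residue-class version of the rearrangements~\eqref{formula sum D1} and~\eqref{formula sum D2} from the proof of Proposition~\ref{Prop estim D012}, which is precisely what the remark preceding the statement describes, and your final reindexings $c\mapsto b-a-c$ and $d\mapsto b-a-c-d$ correctly produce the convolution form in the statement. The only nitpick is that $\mathfrak{S}_0(\lbrace 0,h\rbrace)$ is not bounded but only $O(\log\log h)$ by~\eqref{connors-keating-normalized}; this does not affect the absolute convergence your rearrangements require.
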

	
\subsection{Completing the heuristic}	
	
We now deduce Conjecture~\ref{main-conjecture-intro}, by replacing Theorem~\ref{Intro prop2.1} and Proposition~\ref{Prop estim D012} in~\eqref{total-count-D012}. If $a \equiv b \mod q$, we have
 \begin{align} \nonumber
  N(x;q,(a,a)) 
=& \frac{x K^2}{q \log x}\left(1+ \frac{K}{\sqrt{\log x}} + O \left( \frac{1}{\log{x}} \right) \right) \bigg[ \frac{\sqrt{\log x}}{Kq}  - \frac{2(q-1)}{qK\pi}(\log H)^{1/2}  - \frac1{q} \\ \nonumber
& \qquad  + \sum_{j=1}^J \left(c_0(j) - \frac{c(j)}{q} \right) \;(\log H)^{1/2-j} + O \left( (\log{H})^{-J-1/2} \right) 
\bigg]\\
\label{other-one}
=& \frac{K x}{q^2 \sqrt{\log x}}   \bigg[ 1 +  \frac{1}{\sqrt{\log x}}  \sum_{j=0}^J b_0(j) \;(\log H)^{1/2-j} + O \left( \frac{1}{\sqrt{\log{x}} (\log{H})^{J+1/2}} \right) \bigg]
\end{align}
where $b_0(0)= -2(q-1)/\pi$ and $b_0(j) = K(q c_0(j) - c(j))$ for $j \geq 1$.

If $a \not\equiv b \mod q$, we have
\begin{align} \nonumber
	N(x;q,(a,b)) 
	=& \frac{x K^2}{q \log x}\left(1+ \frac{K}{\sqrt{\log x}} + O \left( \frac{1}{\log{x}} \right) \right) \bigg[ \frac{\sqrt{\log x}}{Kq}  + \frac{2}{qK\pi}(\log H)^{1/2}  - \frac1{q}  
	\\ \nonumber &+ {\frac{1}{2 K^2 \phi(q)}} \sum_{\substack{\chi \mod q \\\chi\neq\chi_0}} \chi(v)^{-1} C_{q,\chi} + \sum_{j=1}^J  \left(c_1(j) - \frac{c(j)}{q} \right) \; (\log H)^{1/2-j}
	+ O \left( (\log{H})^{-J-1/2} \right)  \bigg] \\
	\label{case a b distinct}
	=&  \frac{K x}{q^2 \sqrt{\log x}}   \bigg[1 + C_{a,b} + \frac{1}{\sqrt{\log x}}  \sum_{j=0}^J b_1(j) \;(\log H)^{1/2-j} + O \left( (\log{H})^{-J-1/2} \right) 
	 \bigg] \\ \nonumber
	=&  \frac{K x}{q^2 \sqrt{\log x}}   \bigg[1 + C_{a,b} - \frac{1}{\phi(q)}  \frac{1}{\sqrt{\log x}}  \sum_{j=0}^J b_0(j) \;(\log H)^{1/2-j} + O \left( (\log{H})^{-J-1/2} \right) \bigg], 
\end{align}
where $C_{a,b} = {\frac{q}{2 K \phi(q)}} \sum_{\substack{\chi \mod q \\\chi\neq\chi_0}} \chi(v)^{-1} C_{q,\chi} $, 
$b_1(0) = {2}/{\pi}$ and $b_1(j) = K(qc_1(j)-c(j))$ for $j\geq 1$. 
For the last line, we used~\eqref{relation-between-c} which gives $b_1(j)= -\frac{b_0(j)}{\phi(q)}$, for $j \geq 0$.

To deduce Conjecture~\ref{main-conjecture-intro} and obtain the explicit expressions for the constants~$C_j$ for~$0 \leq j \leq J$, from the above expressions~\eqref{other-one} and~\eqref{case a b distinct},
we approximate $(\log{H})^{1/2-j}$ for $0 \leq j \leq J$, where $H$ is given by~\eqref{changeofvariables}. 
We illustrate the process below for $J=1$. 

Using the approximations
\begin{align*}
(\log{H})^{1/2} &= \frac{1}{\sqrt{2}} \sqrt{\log{\log{x}}} - \frac{\log{K}}{\sqrt{2}}\frac{1}{\sqrt{\log{\log{x}}}} +  O \left((\log\log x)^{-3/2} \right),\\
(\log{H})^{-1/2} &= \frac{\sqrt{2}}{ \sqrt{\log{\log{x}}}}   + O \left((\log\log x)^{-3/2} \right),
\end{align*}
we obtain
\begin{align} \label{last-one}
 \sum_{j=0}^1 b_0(j) \;(\log H)^{1/2-j} &= -\frac{\sqrt{2} (q-1)}{\pi} \sqrt{\log{\log{x}}} + \left( \frac{\sqrt{2} (q-1) \log{K}}{ \pi} + \sqrt{2} b_0(1) \right) (\log \log x)^{-1/2} \\
 & \nonumber + O
 \left( (\log \log x)^{-3/2} \right)
 \end{align}
 Using  the values of $c(1)$ and $c_0(1)$ given by~\eqref{Value c} and~\eqref{Values c_1 c_0}, we have 
 \begin{align*}
 b_0(1) &=  K(q c_0(1) - c(1)) = \frac{\phi(q)}{\pi}\left(\frac{\omega + \gamma}{2} + \frac{q}{\phi(q)}\log{q}\right), 
 \end{align*}
 where $\gamma$ is the Euler-Mascheroni constant and $\omega$ is defined in Lemma \ref{lemma-Z}.
 Replacing in~\eqref{last-one}  and then in~\eqref{other-one}, we get Conjecture~\ref{main-conjecture-J1} below which is the special case of Conjecture~\ref{main-conjecture-intro} for $J=1$. The case $a\not\equiv b \pmod{q}$ follows from multiplying the corresponding term by $\frac{-1}{\phi(q)}$ in~\eqref{case a b distinct}.
The general case of Conjecture~\ref{main-conjecture-intro} follows similarly by using approximations for $(\log{H})^{-1/2-j}$ as above for $1 \leq j \leq J$.

\begin{conjecture} \label{main-conjecture-J1} Fix $q \equiv 1 \mod 4$. Then,
\begin{align*}
		N(x,q,(a,a)) &\sim \frac{x}{q^2} \frac{ K}{ \sqrt{\log x}} \bigg( 1 - \frac{\sqrt{2} \phi(q)}{\pi}\frac{(\log \log x)^{1/2}}{(\log x)^{1/2}} 
		 + \frac{C_1}{(\log x)^{1/2} (\log\log x)^{1/2}}  \bigg) 
\end{align*}
up to an error term of $O \left( \displaystyle \frac{x}{\log{x}(\log{\log x})^{3/2}} \right),$ and 
with
\begin{align*}
C_1 
&= \frac{\sqrt{2} \phi(q)}{\pi} \left( \log{K} + \frac{\omega + \gamma}{2} \right) + \frac{\sqrt{2} q \log{q}}{\pi},
\end{align*}
where $\gamma$ is the Euler-Mascheroni constant and $\omega$ is defined in Lemma \ref{lemma-Z}.

For $a \neq b \mod q$, 
\begin{align*}
	N(x,q,(a,b)) &= \frac{x}{q^2} \frac{ K}{ \sqrt{\log x}} \bigg( 1 + \frac{\sqrt{2}}{\pi} \frac{\sqrt{\log \log{x}}}{\sqrt{\log{x}}} + \frac{C_{a,b} }{\sqrt{\log{x}}} - \frac{C_1}{\phi(q)(\log x)^{1/2} (\log\log x)^{1/2}} \bigg)
\end{align*}
up to an error term of $O \left( \displaystyle \frac{x}{\log{x}(\log{\log x})^{3/2}} \right),$ and 
with 
\begin{equation*} 
C_{a,b} := \frac{1}{2K}\frac{q}{\phi(q)}\sum_{\chi\neq \chi_0}\overline{\chi}(b-a)C_{q,\chi}
\end{equation*}
where the sum is over the non-principal Dirichlet characters modulo $q$ and $C_{q, \chi}$ is defined in~\eqref{def-C}. 
\end{conjecture}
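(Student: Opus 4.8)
The plan is to obtain Conjecture~\ref{main-conjecture-J1} as the case $J=1$ of the asymptotic expansions~\eqref{other-one} and~\eqref{case a b distinct}, which were already produced by the heuristic together with Proposition~\ref{Prop estim D012} and Theorem~\ref{Intro prop2.1}; what remains is purely to re-express everything in the variable $\log\log x$ via the change of variables~\eqref{changeofvariables}. So I would begin by recording that $H = \sqrt{\log x}/K - \tfrac12 + O((\log x)^{-1/2})$, hence $\log H = \tfrac12\log\log x - \log K + O((\log x)^{-1/2})$, and by raising this to the powers $\tfrac12$ and $-\tfrac12$ and Taylor expanding obtain the two approximations $(\log H)^{1/2} = \tfrac{1}{\sqrt2}(\log\log x)^{1/2} - \tfrac{\log K}{\sqrt2}(\log\log x)^{-1/2} + O((\log\log x)^{-3/2})$ and $(\log H)^{-1/2} = \sqrt2\,(\log\log x)^{-1/2} + O((\log\log x)^{-3/2})$ stated in the excerpt. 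For general $J$ one expands each $(\log H)^{1/2-j}$ with $1 \le j \le J$ the same way; for $J=1$ only $j=0,1$ contribute, since $(\log H)^{1/2-j} = O((\log\log x)^{1/2-j})$ is absorbed by the error term once $j \ge 2$.

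Next I would substitute these approximations into the inner sum $\sum_{j=0}^{1} b_0(j)(\log H)^{1/2-j}$ appearing in~\eqref{other-one}, reading off the coefficient of $(\log\log x)^{1/2}$ as $b_0(0)/\sqrt2 = -\sqrt2(q-1)/\pi = -\sqrt2\,\phi(q)/\pi$, and the coefficient of $(\log\log x)^{-1/2}$ as $\sqrt2\,b_0(1) - b_0(0)\log K/\sqrt2 = \sqrt2\,b_0(1) + \tfrac{\sqrt2(q-1)}{\pi}\log K$; this reproduces~\eqref{last-one}. Plugging~\eqref{last-one} back into~\eqref{other-one} and simplifying the prefactor then yields the $a \equiv b \pmod q$ formula, with the coefficient of $(\log\log x)^{-1/2}$ equal to $C_1$.

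The explicit value of $C_1$ I would obtain from the Selberg--Delange constants: by definition $b_0(1) = K(q\,c_0(1) - c(1))$, so invoking Theorem~\ref{Intro prop2.1} with the evaluations~\eqref{Value c} and~\eqref{Values c_1 c_0} of $c(1)$ and $c_0(1)$ gives $b_0(1) = \tfrac{\phi(q)}{\pi}\bigl(\tfrac{\omega+\gamma}{2} + \tfrac{q}{\phi(q)}\log q\bigr)$, where $\gamma$ is the Euler--Mascheroni constant and $\omega$ is as in Lemma~\ref{lemma-Z}. Combining this with the $\log K$ term produces $C_1 = \tfrac{\sqrt2\,\phi(q)}{\pi}\bigl(\log K + \tfrac{\omega+\gamma}{2}\bigr) + \tfrac{\sqrt2\,q\log q}{\pi}$. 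For $a \not\equiv b \pmod q$ I would rerun the identical computation starting from~\eqref{case a b distinct}, now using the identity $b_1(j) = -b_0(j)/\phi(q)$ furnished by relation~\eqref{relation-between-c}: this flips the sign of the $(\log\log x)^{1/2}$-term to $+\sqrt2/\pi$ and turns the $(\log\log x)^{-1/2}$-coefficient into $-C_1/\phi(q)$, while the term $C_{a,b}$, being independent of the $\log H$-expansion, is carried over directly from~\eqref{case a b distinct}. The stated error term $O(x(\log x)^{-1}(\log\log x)^{-3/2})$ comes from multiplying the bracketed error, which after the change of variables is $O((\log x)^{-1/2}(\log\log x)^{-3/2})$ --- arising both from the residual $O((\log H)^{-J-1/2})$ in~\eqref{other-one} at $J=1$ and from the $O((\log\log x)^{-3/2})$ tails of the Taylor expansions --- by the prefactor of order $Kx/(q^2\sqrt{\log x})$.

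The step I expect to demand the most care is the bookkeeping of error terms through the substitution $\log H \mapsto \tfrac12\log\log x - \log K$: one must verify that the $O((\log x)^{-1/2})$ error in $\log H$ perturbs the expansion only at an order already absorbed into the error term, that the several $O((\log\log x)^{-3/2})$ tails coalesce into a single clean error, and --- for the general-$J$ statement of Conjecture~\ref{main-conjecture-intro} --- that this same bookkeeping goes through uniformly for all $1 \le j \le J$. The genuinely substantive inputs, Theorem~\ref{Intro prop2.1} (proved via the Selberg--Delange method) and Theorem~\ref{Prop bound for D with large sets} (the Montgomery--Soundararajan-type bound used to discard the singular series attached to larger sets), are taken as given here, so the present deduction amounts to a careful asymptotic expansion.
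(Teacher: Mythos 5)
Your proposal is correct and follows essentially the same route as the paper: starting from~\eqref{other-one} and~\eqref{case a b distinct}, expanding $(\log H)^{1/2}$ and $(\log H)^{-1/2}$ in powers of $\log\log x$ via~\eqref{changeofvariables} to recover~\eqref{last-one}, computing $b_0(1)=K(qc_0(1)-c(1))$ from~\eqref{Value c} and~\eqref{Values c_1 c_0}, and using $b_1(j)=-b_0(j)/\phi(q)$ from~\eqref{relation-between-c} for the case $a\not\equiv b\pmod q$. The coefficient bookkeeping and the resulting value of $C_1$ match the paper exactly.
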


\subsection{Another formulation of Theorem~\ref{Intro prop2.1}}

Finally, as we did in Theorem~\ref{thm-S0S-integral} for the counting function of the number of \sos up to $x$, we now state a different version of Theorem~\ref{Intro prop2.1} with a very good error term by using an integral form for the main term. We prove this Proposition in Section~\ref{section-improved-averages}, and we use it for numerical testing in Section~\ref{section-numerical}.

\begin{proposition} \label{prop-lucia-2}
	Fix $\varepsilon > 0$. There exists $c>0$ such that for $v \not\equiv 0 \mod q$
\begin{align*} 
S(q, v, H) 
&= \frac{H}{q} + \frac{1}{2K^2 \phi(q)} \sum_{\chi \neq \chi_0} \chi(v)^{-1} C_{q, \chi} + \frac{1}{2  \pi K^2 \phi(q)}\int_{1/2+\varepsilon}^1 \frac{F_{\chi_0}(\sigma) H^{\sigma-1}}{|\sigma-1|^{1/2}} d\sigma + O \left( \exp(-c \sqrt{\log{H}} ) \right), 
\end{align*}
and
\begin{align*}
S(q, 0, H) 
	&= \frac{H}{q} + 
	\frac{1}{ \pi  K^2}\int_{1/2+\varepsilon}^1 \frac{F'(\sigma) H^{\sigma-1} + F(\sigma) H^{\sigma-1} (\log{H} - A_q(\sigma)/2)}{|\sigma-1|^{1/2}} d\sigma + O \left( \exp(-c \sqrt{\log{H}} ) \right)\\	
	\end{align*}
	where $F(s)=\zeta(s-1) M(s-1) \left[ (s-1) \zeta(s) \right]^{1/2}  \Gamma(s)$ and $F_{\chi_0}(s) =  \frac{1-q^{-(s-1)}}{s-1}   F(s)$,
with $M(s)$ as defined by~\eqref{def-M} and $A_q(s) =  \frac{1-q^{-(s-1)}}{s-1}  $.
	Assuming the Riemann Hypothesis, we can replace the error terms above by
	$O \left( H^{-1/2+\varepsilon} \right)$.
	\end{proposition}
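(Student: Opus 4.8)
The plan is to run the Selberg--Delange machinery of the proof of Theorem~\ref{thm-S0S-integral} twice, once in each residue-class aggregate, keeping a contour integral instead of expanding into descending powers of $\log H$. Recall from~\eqref{connors-keating-normalized} the Connors--Keating formula for $\mathfrak{S}(\{0,h\})$; multiplying by $e^{-h/H}$ and summing over $h \equiv v \pmod q$ produces a Dirichlet-series-weighted sum whose associated Dirichlet series factors (exactly as in the computation that must underlie Theorem~\ref{Intro prop2.1}, cf.\ $F(s) = \zeta(s-1)M(s-1)[(s-1)\zeta(s)]^{1/2}s^{-1}$ there) as an arithmetic factor $M(s-1)$ times $\zeta(s-1)$ times a half-integral power of $\zeta(s)$. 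First I would write $\sum_{h \equiv v} \mathfrak{S}(\{0,h\}) e^{-h/H}$ via a Mellin transform of $e^{-t}$:
\[
\sum_{h \equiv v \,(q)} \mathfrak{S}(\{0,h\}) e^{-h/H} = \frac{1}{2\pi i} \int_{(c)} \Gamma(s)\, H^s\, D_v(s)\, ds,
\]
where $D_v(s) = \sum_{h \equiv v\,(q)} \mathfrak{S}(\{0,h\}) h^{-s}$; isolating the residue class by characters modulo $q$, $D_v(s) = \frac{1}{\phi(q)} \sum_{\chi \bmod q} \bar\chi(v) D_\chi(s)$ with $D_\chi(s) = \sum_h \chi(h) \mathfrak{S}(\{0,h\}) h^{-s}$. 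This is where $F_{\chi_0}(s) = \frac{1-q^{-(s-1)}}{s-1} F(s)$ and the factor $\Gamma(s)$ in the stated $F$ enter: the Mellin transform of $e^{-t}$ contributes the $\Gamma(s)$, and removing the $h \equiv 0 \pmod q$ terms from the full series $D(s)$ (to pass from $F$ to $F_{\chi_0}$) contributes $A_q(s) = \frac{1-q^{-(s-1)}}{s-1}$.

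Next I would shift the contour leftward. The series $D_\chi(s)$ has, after the factorization, a pole of $\zeta(s-1)$ at $s=2$ giving the main term $H/q$ (the $1/q$ coming from $\bar\chi(v)/\phi(q)$ summed against the residue, i.e.\ only $\chi = \chi_0$ contributes a pole at $s=2$); then there is a branch point at $s=1$ coming from $[(s-1)\zeta(s)]^{1/2}$, which I would handle by a Hankel-type contour wrapped around the cut $(1/2+\varepsilon, 1]$ exactly as in Theorem~\ref{thm-S0S-integral}. For $v \not\equiv 0$, the characters $\chi \neq \chi_0$ contribute no pole at $s=2$ but do contribute a \emph{constant} term at $s=1$ — precisely the constant $\frac{1}{2K^2\phi(q)}\sum_{\chi\neq\chi_0}\bar\chi(v) C_{q,\chi}$, with $C_{q,\chi}$ defined in~\eqref{def-C} as the appropriate residue/value of the $\chi$-twisted series at the branch point. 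The Hankel integral then collapses (after the standard substitution $s = \sigma$ on the two sides of the cut, using that the jump of $(\cdot)^{1/2}$ across the cut is $2i$ times the modulus) to the stated integral $\frac{1}{2\pi K^2 \phi(q)} \int_{1/2+\varepsilon}^1 \frac{F_{\chi_0}(\sigma) H^{\sigma-1}}{|\sigma-1|^{1/2}}\, d\sigma$. For $v \equiv 0$, there is additionally a \emph{double} structure at the branch point because removing the $q \mid h$ terms multiplies by $A_q(s)$, whose behaviour near $s=1$ introduces the $\log H$ and the $A_q(\sigma)/2$ correction; differentiating the integrand produces the $F'(\sigma)$ term, just as the $F'(\sigma) + F(\sigma)\log H$ combination appears in Proposition~\ref{intro-FKR-improved}. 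The remaining vertical segment of the shifted contour, pushed to $\re(s) = 1/2 + \varepsilon$, is bounded by convexity/zero-free-region estimates for $\zeta$, giving the error term $O(\exp(-c\sqrt{\log H}))$ unconditionally and $O(H^{-1/2+\varepsilon})$ under RH — here I would invoke the zero-free region (for the $\zeta$ factor; there is no $L(s,\chi_4)$ because $\mathfrak{S}(\{0,h\})$ only involves primes $\equiv 3 \bmod 4$ through the finite Euler product, which is entire and bounded in the relevant half-plane).

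The main obstacle I anticipate is the careful bookkeeping at the branch point $s = 1$ in the $v \equiv 0$ case: one must track how the factor $A_q(s) = (1 - q^{-(s-1)})/(s-1)$, which is \emph{analytic and nonzero} at $s=1$ with $A_q(1) = \log q$, interacts with the square-root branch point of $[(s-1)\zeta(s)]^{1/2}$ and with the simple pole of $\zeta(s-1)$ at $s=2$, and to verify that the Hankel contour manipulation yields exactly $F'(\sigma) H^{\sigma-1} + F(\sigma) H^{\sigma-1}(\log H - A_q(\sigma)/2)$ rather than some variant; the $\log H$ arises because $H^s = H^{\sigma} \cdot H^{i\tau}$-type differentiation under the contour, or equivalently from expanding $H^{s-1}$ near the pieces of the cut, and the $-A_q(\sigma)/2$ is the logarithmic-derivative correction from $F_{\chi_0} = A_q \cdot F$ combined with the half-power. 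A secondary technical point is justifying the interchange of summation and the Mellin inversion (absolute convergence for $\re(s)$ large, using $\mathfrak{S}(\{0,h\}) \ll h^\varepsilon$, which follows from~\eqref{connors-keating-normalized}), and checking that the arithmetic factor $M(s-1)$ — the Euler product over $p \equiv 3 \bmod 4$ extracted from Connors--Keating — converges and is nonvanishing for $\re(s) > 1/2 + \varepsilon$, so that all the named functions $F$, $F_{\chi_0}$ are genuinely analytic on the contour. Modulo these, the argument is a direct transcription of the proof of Theorem~\ref{thm-S0S-integral}, with the residue-class decomposition layered on top and the constants $C_{q,\chi}$ read off as the branch-point data of the twisted series.
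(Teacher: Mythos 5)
Your proposal follows essentially the same route as the paper's proof: Mellin inversion against $\Gamma(s)$, decomposition into Dirichlet characters modulo $q$, the pole of the $\chi_0$-series giving the main term $H/q$, the constants $C_{q,\chi}$ arising from the non-principal characters at the branch point, a slit (Hankel-type) contour around the cut $(1/2+\varepsilon,1]$ collapsing to the stated real integrals, and the $v\equiv 0$ case obtained by subtracting the $\chi_0$-contribution from $S(H)$, where the non-integrable $(s-1)^{-3/2}$ singularity is tamed by an integration by parts (your ``differentiating the integrand''), producing exactly the $F'(\sigma)H^{\sigma-1}+F(\sigma)H^{\sigma-1}(\log H - A_q(\sigma)/2)$ combination. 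One small slip worth noting: $M(s)$ does contain $L(s+1,\chi_4)^{-1/2}$, so the contour shift and the error terms require the zero-free region (or RH) for $L(\cdot,\chi_4)$ as well as for $\zeta$ — but this is precisely the input already used in Theorem~\ref{thm-S0S-integral} and does not affect the structure of your argument.
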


Observe that close to $\frac12$ one has $F'(\sigma) \asymp (\sigma - \tfrac12)^{-\frac54}$, so the error term in the formula for $S(q, 0, H)$ depends strongly on $\varepsilon$.

\section{Proof of Theorem~\ref{Intro prop2.1} } \label{proof-prop-section}

\begin{proof} As in \cite{smilansky}, we define
 $a(h) = 2 K^2 \mathfrak{S}(\{ 0,h \})$. Then using~\eqref{connors-keating-normalized}, we see that $a(h)$ is a multiplicative function of $h$ with
 \begin{equation*}
    a(p^k) = 
    \begin{cases}
        1 &\quad \text{ for } p \equiv 1 \mod 4, \\ 
       \displaystyle 2-\frac{3}{2^k} &\quad \text{ for } p = 2, k \geq 1,\\ 
      \displaystyle  \frac{1-p^{-(k+1)}}{1-p^{-1}} &\quad \text{ for } p \equiv 3 \mod 4.
    \end{cases}
\end{equation*}
Using Mellin Inversion, we have
$$2 K^2 S(H) 
= \sum_{\substack{h \geq 1}}  a(h)
e^{-h/H} = 
\frac{1}{2 \pi i}\int_{(2)} D(s)H^s\Gamma(s) ds,$$
where $D(s)=\sum_{h \geq 1} {a(h)}{h^{-s}}.$
Similarly, for $\chi$ a character modulo $q$, 
\begin{align} \label{after-mellin} 2 K^2 S(H, \chi) 
:= \sum_{\substack{h \geq 1}}  a(h) \, \chi(h) 
e^{-h/H} = 
\frac{1}{2 \pi i}\int_{(2)} D_{ \chi}(s)H^s\Gamma(s) ds, \end{align} 
where $D_{ \chi}(s)=\sum_{h \geq 1} {a(h) \chi(h)}{h^{-s}}.$

In order to compute $S(H)$ and $S(H, \chi)$, we move the contour integral and pick up the contributions of the singularities of the integrand. So, first, we need to understand the analytic properties of the generating series $D(s)$ and $D_{\chi} (s)$.
Using the formulas for $a(p^k)$ above, we have
$$\hspace*{-2cm}
     D_{\chi} (s) = R_\chi(s) P_\chi(s) Q_\chi(s),   
$$
where
\begin{align*}
 R_\chi(s) &= 1 + 2\bigg( \frac{\chi(2) 2^{-s}}{1- \chi(2) 2^{-s}} \bigg) - 3 \bigg( \frac{\chi(2) 2^{-(s+1)}}{1- \chi(2) 2^{-(s+1)}} \bigg)\\
P_\chi(s) &= \prod_{p \equiv 1 \: (4)} ( 1- \chi(p)p^{-s} )^{-1}\\
Q_\chi(s) &= \prod_{p \equiv 3 \: (4)} (1-\chi(p)p^{-s})^{-1}(1-\chi(p)p^{-(s+1)})^{-1}. \end{align*}
This can be rewritten as 
$$D_{\chi}(s) = L(s,\chi) (1- \chi(2) 2^{-s}) R_\chi(s)Q_{1,\chi}(s) = L(s,\chi) L(s+1,\chi)^{\frac{1}{2}} M_{\chi}(s) $$
where
\begin{align*} 
Q_{1,\chi}(s) &= \prod_{p \equiv 3 \: (4)}  (1-\chi(p)p^{-(s+1)})^{-1} \\ 
&= \Big(\frac{L(s+1,\chi)}{L(s+1,\chi\cdot\chi_4)}\Big)^{\frac12} (1-\chi(2)2^{-(s+1)})^{\frac12}  \prod_{p \equiv 3 \pmod 4} (1 - \chi(p)^2 p^{-2(s+1)})^{-\frac12}, \\
M_{\chi}(s) &= \Bigg(1+ \chi(2) 2^{-s} - 3 \bigg( \frac{\chi(2) 2^{-(s+1)}(1- \chi(2) 2^{-s})}{1- \chi(2) 2^{-(s+1)}} \bigg) \Bigg) L(s+1,\chi\cdot\chi_4)^{-\frac12} \\ & \quad \times(1-\chi(2)2^{-(s+1)})^{\frac12} \prod_{p \equiv 3 \pmod 4} (1 - \chi(p)^2 p^{-2(s+1)})^{-\frac12}\\
&= (1 - \chi(2) 2^{-s} + \chi(4)2^{-2s} ) L(s+1,\chi\cdot\chi_4)^{-\frac12} \\ & \quad \times(1-\chi(2)2^{-(s+1)})^{-\frac12} \prod_{p \equiv 3 \pmod 4} (1 - \chi(p)^2 p^{-2(s+1)})^{-\frac12},
\end{align*}
where $\chi_4$ is the primitive character modulo $4$.
The formula for $Q_{1,\chi}(s)$ follows from developing the identity $(1 - \chi(p)^2 p^{-2(s+1)}) = (1 - \chi(p) p^{-(s+1)}) (1 + \chi(p) p^{-(s+1)})$.
Since $\chi \neq \chi_4$, the function $M_{\chi}$ is holomorphic in the half plane $\re(s) \geq 0$, and we can push this limit a bit further to the left depending on the zero-free region of $L(s+1,\chi\cdot\chi_4)$ (up to $\re(s) > - \frac12$ under Riemann Hypothesis).

In the case $\chi$ is a non-principal character, $L(s,\chi)$ is entire on the complex plane, and $L(s+1,\chi)^{\frac{1}{2}}$ is holomorphic in a region containing the half-plane $\re(s)\geq 0$, where $L(s+1,\chi)$ does not vanish. 
As such, there is no pole or singularity in the integrand at $s=1$. 
 If we shift the line of integration of~\eqref{after-mellin} to the left of the line $\re(s) = 0$ using the standard zero-free region and estimates for $L$-functions, we obtain (for some $c>0$ and any $\varepsilon > 0$)
\begin{align} \label{non-trivial} S(H, \chi) = \frac{C_{q,\chi}}{2K^2} + \begin{cases} 
{O}(H^{-1/2 + \varepsilon}) & \text{under GRH} \\ {O}\left(\exp \left( - c \sqrt{\log{H}} \right) \right)& \text{otherwise,} \end{cases}
\end{align}
where the constant term comes from the contribution of pole of order 1 from $\Gamma(s)$ at $s=0$, and 
\begin{align}\label{def-C}
C_{q,\chi} &= D_{ \chi}(0) = L(0,\chi)L(1,\chi)^{\frac12}M_{\chi}(0) \\ 
&= L(0,\chi)L(1,\chi)^{\frac12}L(1,\chi\cdot\chi_4)^{-\frac12} (1 - \chi(2) + \chi(4) ) (1-\chi(2)2^{-1})^{-\frac12} \prod_{p \equiv 3 \pmod 4} (1 - \chi(p)^2 p^{-2})^{-\frac12} . \nonumber
\end{align}
Note that $C_{q,\chi} \neq 0$ only when $\chi(-1) = -1$.

 If $\chi=\chi_0$, $D_{ \chi_0}(s)$ has a simple pole at $s=1$ with residue $2 K^2 \phi(q)/q$, and no other singularities for $\re(s) > 0$, and we move the integral to $\re(s) = \varepsilon > 0$. This gives
\begin{align} \nonumber
\sum_{\substack{h \geq 1}} a(h) \chi_0(q) e^{-h/H} &= \frac{1}{2 \pi i}\int_{(2)} D_{ \chi_0} (s) H^s\Gamma(s) ds  \\ 
\label{before-hankel-D0} &= 2 \frac{\phi(q)}{q} K^2 H+ \frac{1}{2 \pi i}\int_{(\varepsilon)} D_{ \chi_0}(s) H^s\Gamma(s) ds. 
\end{align}
Similarly, we have that 
\begin{align} \label{before-hankel-D}
\sum_{\substack{h \geq 1}} a(h)e^{-h/H} &= \frac{1}{2 \pi i}\int_{(2)} D(s) H^s\Gamma(s) ds  = 2 K^2 H+ \frac{1}{2 \pi i}\int_{(\varepsilon)} D(s) H^s\Gamma(s) ds, 
\end{align}
where
$$D(s) = \zeta(s)(1-  2^{-s}) R(s)Q_{1}(s)  = \zeta(s) \zeta(s+1)^{\frac{1}{2}} M(s),$$
and the functions $R, Q_1, M$ are obtained by taking $\chi \equiv 1$ in the previous definitions.

To account for the contribution of the singularity of $D_{ \chi_0}(s)$ and $D(s)$ as $s=0$ to the integrals~\eqref{before-hankel-D0} and~\eqref{before-hankel-D}, we use  again the Selberg--Delange method. Since we are evaluating a Mellin transform, we cannot use directly \cite[Theorem~13.2]{koukoulopoulos} as in Section~\ref{section-SOS-AP}, but we are following the same standard steps. We first approximate the  line of integration $\Re(s)=\varepsilon$ by the truncated segment  from $\varepsilon - iT$ to $\varepsilon + iT$, which we then deform to a truncated 
Hankel's contour. This is possible since there are no residue inside this contour. We then replace this contour by the infinite Hankel's countour $\mathcal{H}$ of Figure~\ref{fig-hankel} with a very good error term, which allows us to use Theorem~\ref{hankel} to compute the contribution of the singularity of the generating functions (for each term of the Taylor series). We refer the reader to \cite[Chapter~13]{koukoulopoulos} and \cite[Chapter~5]{tenenbaum} for more details. The contributions to $S(H)$ and $S(H, \chi_0)$
will be different in magnitude, because the singularites of $\zeta(s) \zeta(s+1)^{1/2}$ and $L(s,\chi_0) L(s+1,\chi_0)^{1/2}$ at $s=0$ are different, since $L(0, \chi_0)=0$, but $\zeta(0) \neq 0$.

\begin{theorem}[Hankel's formula \cite{tenenbaum} Theorem 0.17 p.179] \label{hankel} Fix any $r>0$, and 
let $\mathcal{H}$ be the Hankel's countour, which is the path consisting of the circle $|s|=r$ excluding the point $s=-r$, and of the half-line $(-\infty, -r]$ covered twice, 
with respective arguments $\pi$ and $-\pi$. Then for any complex number $z$, we have
$${\frac{1}{2\pi i}}\int_{\mathcal{H}} s^{-z} e^s \; ds = \frac{1}{\Gamma(z)}.$$
\end{theorem}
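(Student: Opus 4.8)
The plan is to reduce Hankel's formula to Euler's reflection formula for the Gamma function. First I would record two preliminary facts. (i) With $s^{-z}=e^{-z\log s}$ for the principal branch of $\log$, holomorphic on $\mathbb{C}\setminus(-\infty,0]$, the integrand $s\mapsto s^{-z}e^{s}$ is holomorphic off the cut; since $e^{s}$ decays along the two rays of $\mathcal{H}$, a routine contour-deformation argument (compare two Hankel contours of radii $r_1<r_2$ truncated at $\re s=-R$, and let $R\to\infty$) shows that $\frac{1}{2\pi i}\int_{\mathcal{H}}s^{-z}e^{s}\,ds$ is independent of $r>0$. (ii) On the rays one has $\lvert s^{-z}e^{s}\rvert=O(t^{-\re z}e^{-t})$ as $t\to\infty$, so the integral converges absolutely and locally uniformly in $z$; hence it defines an entire function of $z$, say $H(z)$, by differentiation under the integral sign.

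Next I would evaluate $H(z)$ for $\re z<1$ by letting $r\to0^{+}$. On the circle $\lvert s\rvert=r$ one has $\lvert s^{-z}e^{s}\rvert\le r^{-\re z}e^{r}$ and the arclength is $2\pi r$, so the circular part contributes $O(r^{1-\re z})\to0$. On the two rays, writing $s=te^{\pm i\pi}=-t$ with $t\in(0,\infty)$ gives $s^{-z}=t^{-z}e^{\mp i\pi z}$ and $ds=-\,dt$; combining the incoming ray of argument $-\pi$ with the outgoing ray of argument $+\pi$, with the correct orientation, produces the factor $e^{i\pi z}-e^{-i\pi z}=2i\sin(\pi z)$ times $\int_{0}^{\infty}t^{-z}e^{-t}\,dt=\Gamma(1-z)$. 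Dividing by $2\pi i$ yields $H(z)=\frac{\sin(\pi z)}{\pi}\,\Gamma(1-z)$ for $\re z<1$.

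Finally I would invoke Euler's reflection formula $\Gamma(z)\Gamma(1-z)=\pi/\sin(\pi z)$, which turns the previous identity into $H(z)=1/\Gamma(z)$ throughout the half-plane $\re z<1$. Since $H$ is entire by step (ii) and $1/\Gamma$ is entire, the identity theorem extends $H(z)=1/\Gamma(z)$ to all $z\in\mathbb{C}$, which is the claim; this is of course the classical argument behind the reference to \cite{tenenbaum}.

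I expect the only genuinely delicate points to be, first, the branch-of-logarithm bookkeeping on the two rays — making sure the ray of argument $+\pi$ contributes the factor $e^{-i\pi z}$ and the ray of argument $-\pi$ the factor $e^{+i\pi z}$, so that (with the orientation of $\mathcal{H}$) the two ray integrals combine with a relative minus sign in the exponential, giving $2i\sin(\pi z)$ rather than cancelling — and second, the justification that the contour integral is $r$-independent, which requires controlling the truncation as $\re s\to-\infty$. Everything else is routine estimation.
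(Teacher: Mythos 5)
The paper does not prove this statement: it is quoted verbatim as a classical result with a citation to Tenenbaum, so there is no internal proof to compare against. Your argument is correct and is essentially the textbook proof (indeed the one behind the cited reference): $r$-independence by deformation, collapse of the contour for $\re z<1$ to get $\frac{1}{2\pi i}\bigl(e^{i\pi z}-e^{-i\pi z}\bigr)\Gamma(1-z)=\frac{\sin(\pi z)}{\pi}\Gamma(1-z)$, Euler's reflection formula, and analytic continuation via the identity theorem; your sign bookkeeping on the two rays is consistent with the counterclockwise orientation implicit in the statement. Nothing further is needed.
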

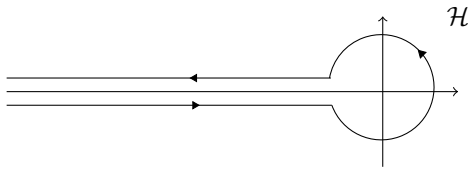
\begin{figure}[] 
\centering
\begin{tikzpicture}

\draw [->](-5,0) -- (1.0,0);
\draw [<-] (0,1) -- (0,-1.0);
\node[scale = 0.09, isosceles triangle,
	isosceles triangle apex angle=60,
	draw,
	rotate=135,
	fill=black,
	minimum size =1cm] (T2)at (0.518,0.505){};
\node[scale = 0.08, isosceles triangle,
	isosceles triangle apex angle=60,
	draw,
	rotate=180,
	fill=black,
	minimum size =1cm] (T2)at (-2.5,0.18116){};
	\node[scale = 0.08, isosceles triangle,
	isosceles triangle apex angle=60,
	draw,
	rotate=0,
	fill=black,
	minimum size =1cm] (T2)at (-2.5,-0.18116){};
\draw (-0.67613,-0.18116) arc
	[
		start angle=200,
		end angle=530,
		x radius=0.7cm,
		y radius =0.7cm
	] ;
\draw []  (-0.67613,-0.18116) -- (-5,-0.18116);
\draw []  (-5,0.18116) -- (-0.69,0.18116);

\node at (1,1) {$\mathcal{H}$};

\end{tikzpicture}
  \caption{Hankel's Contour}
  \label{fig-hankel}
\end{figure}

We first work with $D(s) = \zeta(s) \zeta(s+1)^{\frac{1}{2}} M(s)$.
The function $M(s)$  is analytic around $s=0$, and
\begin{equation*} 
	M(0) =  2K L(1, \chi_4)^{-\frac{1}{2}} = \frac{4K}{\sqrt{\pi}}. 
\end{equation*}
Then, $s^{\frac32}D(s) \Gamma(s) $ is analytic and non-zero around $s=0$ with Taylor series
$\sum_{n\geq 0} c_n s^n$,
and we write
\begin{align*} 
D(s) \Gamma(s)  = \frac{a{(3/2)}}{s^{3/2}} + \frac{a{(1/2)}}{s^{1/2}} + a{(-1/2)} s^{1/2} + \dots. \end{align*}
We now compute the contribution to the integral~\eqref{before-hankel-D} for each term of the series above using Theorem~\ref{hankel}.
For every term $a(z)/s^z$ of the Taylor series above (where $z = 3/2, 1/2, -1/2, \dots$), we have
\begin{align*}
\frac{1}{2 \pi i}\int_{\mathcal{H}} a(z) s^{-z} H^s ds &= \frac{a(z)}{2 \pi i}\int_{\mathcal{H}} s^{-z} e^{s \log{H}} ds \\ &= \frac{ a(z) {(\log H})^{z-1}}{{2 \pi i}} \int_{\mathcal{H}} t^{-z} e^{t} dt =
\frac{ a(z) {(\log H})^{z-1}}{\Gamma(z)} , \end{align*}
where we used the change of variables $t=s \log H$. 
This gives, for any integer $N\geq 1$,
\begin{align*} 
\frac{1}{2 \pi i}\int_{(\varepsilon)} D(s) H^s\Gamma(s) ds =
 \sum_{n=0}^N \frac{a(3/2-n) (\log H)^{1/2-n}}{\Gamma(3/2-n)} + O \left( (\log H)^{1/2-N-1} \right).
\end{align*}
Replacing in~\eqref{before-hankel-D}, this gives
\begin{equation} \label{SH}
S(H) = H +   \sum_{j=0}^J {c(j)(\log H)^{1/2-j}} + O \left( (\log H)^{-1/2-J} \right),
\end{equation}
with  $$c(j) = \displaystyle \frac{a(3/2-j)}{2K^2 \Gamma(3/2-j)}, \;\;j \geq 0.$$
To complete the proof of Theorem~\ref{Intro prop2.1}, we now compute the values of $c(0)$ and $c(1)$. 
Using the expansions\footnote{
	We used the Laurent expansion of $\zeta(s)$ at $s = 1$ 
	\begin{align*}
		\zeta(s) = \frac{1}{s-1} + \sum_{n = 0}^{\infty}(-1)^n\frac{\gamma_n}{n!} (s-1)^n, \hspace{0.3cm} \text{with } \gamma_n := \lim_{N \rightarrow \infty}\left(\sum_{1 \ge k \ge N}\frac{\log^n k}{k} - \int_1^N \frac{\log^n t}{t} dt\right), \hspace{0.3cm} \text{and } \gamma := \gamma_0.
\end{align*}} around $s=0$ 
\begin{align*}
\sqrt{ \zeta(s+1)} = \frac{1}{s^{1/2}} \left( 1 + \frac{\gamma}{2} s + O(s^2) \right), \qquad   & \qquad
\Gamma(s) = \frac{1}{s} - \gamma  + O( s ) ,
\end{align*}
we have
\begin{align} \label{TS1}
D(s) \Gamma(s)  =  \frac{1}{s^{\frac{3}{2}}} \zeta(s) M(s) \left( 1 -  \frac{ \gamma}{2}s + O(s^2) \right),
\end{align}
which gives
$$a(3/2) =  \zeta(0) M(0)  = -\frac{2K}{\sqrt{\pi}} \;\;\Longrightarrow\;\; c(0) = \frac{a(3/2)}{2 K^2\Gamma(3/2)} = -\frac{2}{K\pi}.
$$
To get the value of $c(1)$, we need the first 2 terms of the Taylor series around $s=0$ of the analytic function
\begin{align} \label{def-G}
Z(s) = \zeta(s) M(s) = Z(0) + Z'(0) s + O \left( s^2 \right).
\end{align}
Replacing in~\eqref{TS1}, and using Lemma~\ref{lemma-derivatives-G} for the value of $Z'(0)$, we have
\begin{align}\label{Value c}
a(1/2)  =  \left( Z'(0) - \frac{\gamma}{2} Z(0)   \right) =   \frac{K (\omega+\gamma)}{\sqrt{\pi}} \;\;\Longrightarrow\;\; c(1) = \frac{a(1/2)}{2 K^2\Gamma(1/2)} 
= \frac{\omega+\gamma}{2 \pi K},
\end{align}
where $\gamma$ is the Euler-Mascheroni constant and $\omega$ is defined in Lemma~\ref{lemma-derivatives-G}.

We now turn to the secondary term for the sum
\begin{align} \nonumber
\sum_{\substack{h \geq 1}} a(h) \chi_0(h) e^{-h/H} &= \frac{1}{2 \pi i}\int_{(2)} D_{ \chi_0} (s) H^s\Gamma(s) ds  \\ 
\label{before-hankel-D0-second} &= 2 \frac{\phi(q)}{q} K^2 H+ \frac{1}{2 \pi i}\int_{(\varepsilon)} D_{ \chi_0}(s) H^s\Gamma(s) ds 
\end{align}
 which is similar to the above replacing 
$D(s)$ with $D_{ \chi_0},$ where $\chi_0$ is the principal character modulo $q$.
We have
$$D_{\chi_0}(s) = L(s,\chi_0) L(s+1,\chi_0)^{\frac{1}{2}} M_{\chi_0}(s),$$
where 
\begin{multline*}
M_{\chi_0}(s) = (1- 2^{-s} + 2^{-2s}) L(s+1,\chi_0\cdot\chi_4)^{-\frac12} (1-2^{-(s+1)})^{-\frac12} \prod_{p \equiv 3 \pmod 4} (1 - p^{-2(s+1)})^{-\frac12}
\end{multline*}
since $\chi_0(p)=1$ for each $p \nmid q$ and $q \equiv 1 \pmod 4$. 
We remark that  $M_{\chi_0} (s) = (1-q^{-(s+1)})^{-\frac12} M(s)$, which implies that 
$$D_{\chi_0}(s) = L(s,\chi_0) \zeta(s+1)^{\frac{1}{2}} M(s) = (1-q^{-s}) D(s).$$
Then writing $(1-q^{-s}) = (\log q)s +O(s^2)$, we notice that $s^{1/2} D_{\chi_0}(s)\Gamma(s)$ is analytic and non-zero around $s=0$. 
Indeed, $L(s, \chi_0)$ has a simple zero at $s=0$ which cancels the pole of~$\Gamma(s)$.
Around $s=0$, we write
\begin{align*} 
D_{\chi_0}(s)\Gamma(s) = \frac{b(1/2)}{s^{1/2}} + b(-1/2)s^{1/2} + b(-3/2) s^{3/2}  + \dots, \end{align*} 
and working as above this gives
\begin{align*} 
\frac{1}{2 \pi i}\int_{(\varepsilon)} D_{\chi_0}(s) H^s\Gamma(s) ds =
 \sum_{n=0}^N \frac{b(1/2-n) (\log H)^{-1/2-n}}{\Gamma(1/2-n)} + O \left( (\log H)^{-1/2-N-1} \right),
\end{align*}
replacing in~\eqref{before-hankel-D0-second}, we have
\begin{align} \label{trivial-char-1}
\begin{split}
S(H, \chi_0) 
&=  \frac{\phi(q)}{q} H + \sum_{j=1}^J  c(j, \chi_0) (\log H)^{1/2-j}  + O \left( (\log{H})^{-1/2-J} \right).
\end{split}
\end{align}
Using the expansion of $D(s)\Gamma(s)$  above, we have 
$$b(1/2) = a(3/2)\log{q}  = -\frac{2K}{\sqrt{\pi}}\log{q} \;\;\Longrightarrow\;\; c(1, \chi_0) = \frac{b(1/2)}{2 K^2\Gamma(1/2)} = -\frac{1}{K {\pi}}\log{q}.$$

We now  complete the proof of Theorem~\ref{Intro prop2.1}.
Using~\eqref{non-trivial} and~\eqref{trivial-char-1}  and the orthogonality relations, we have for $v \neq 0$,
\begin{align*}
   S(q,v,H) &= \sum_{\substack{h\geq1 \\ h \equiv v \mod q}} \mathfrak{S}(\{ 0,h \}) e^{-h/H} 
    = \frac{1}{\phi(q)} \sum_\chi \chi(v)^{-1} S(H, \chi) \\
    &= \frac{1}{\phi(q)} S(H, \chi_0)   + {\frac{1}{2 K^2 \phi(q)}} \sum_{\chi\neq\chi_0} \chi(v)^{-1} C_{q,\chi}  + O(\exp(-c\sqrt{\log H})) \\
    &= \frac{H}{q} 
     + {\frac{1}{2 K^2 \phi(q)}} \sum_{\chi\neq\chi_0} \chi(v)^{-1} C_{q,\chi}   + \sum_{j=1}^J \frac{c(j, \chi_0)}{\phi(q)} (\log{H})^{1/2-j} + O \left( (\log{H})^{-1/2-J}  \right).
\end{align*}
For $v=0$, we use~\eqref{SH} and the above to get
\begin{eqnarray*}
S(q,0,H) &=& S(H) - \sum_{v \in ({\Z}/{q\Z})^*} S(q,v,H)\\
&=&  \frac{H}{q} - \frac{2}{K\pi} \sqrt{\log \: H} + \sum_{j=1}^J \left( c(j) - c(j, \chi_0) \right) (\log{H})^{1/2-j} 
+ O \left( (\log{H})^{-1/2-J}  \right),
\end{eqnarray*}
where we used the fact that
\begin{eqnarray} \label{def-M}
 \sum_{v \in ({\Z}/{q\Z})^*}\sum_{\chi\neq\chi_0} \chi(v)^{-1} C_{q,\chi} = 0\end{eqnarray}
by the orthogonality relations. 
This completes the proof of the proposition, with $c_1(j) = c(j, \chi_0)/\phi(q)$ and $c_0(j) = c(j) - c(j, \chi_0)$ for $j \geq 1$, from which the relation
$c_0(j)+\phi(q) c_1(j)=c(j)$ easily follows. From the values $c(1)$ and $c(1, \chi_0)$ computed above, we have
\begin{align}\label{Values c_1 c_0}
	c_1(1) = - \frac{ \log{q}}{K\phi(q) \pi} \;\; \mbox{and} \;\; c_0(1)  = \frac{1}{K\pi}\left(  \frac{\omega+\gamma}{2} + \log{q} \right),
\end{align}
where $\gamma$ is the Euler-Mascheroni constant and $\omega$ is defined in Lemma~\ref{lemma-derivatives-G}.
\end{proof}

\begin{lem} \label{lemma-derivatives-G} \label{lemma-Z} Let $Z(s)$ be the function defined by~\eqref{def-G}. Then,
\begin{align*}
Z'(0) & =  \frac{K}{\sqrt{\pi}} \omega \approx -0.3851314513\ldots
\end{align*}
where 
$$\omega = \log\frac{2}{\pi^2} + \frac{L'(1, \chi_4)}{L(1, \chi_4)}+ 2\sum_{p \equiv 3 (4)}\frac{\log{p}}{p^{2}-1}.$$ 
\end{lem}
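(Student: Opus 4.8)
The plan is to obtain $Z'(0)$ by logarithmic differentiation of the factorization $Z(s) = \zeta(s)M(s)$ introduced in the proof of Theorem~\ref{Intro prop2.1}, where $M(s)$ is the $\chi \equiv 1$ specialization
\[
M(s) = (1 - 2^{-s} + 2^{-2s})\, L(s+1,\chi_4)^{-1/2}\,(1-2^{-(s+1)})^{-1/2}\prod_{p \equiv 3 (4)} (1 - p^{-2(s+1)})^{-1/2}.
\]
First I would note that near $s=0$ every factor is holomorphic and non-vanishing — the factor $L(s+1,\chi_4)$ because $\chi_4 \neq \chi_0$ so $L(1,\chi_4) \neq 0$, and the $2$-factor together with the product over $p \equiv 3 (4)$ because these are given by absolutely convergent expansions there — so $M$, and hence $Z$ (since $\zeta(0) = -\tfrac12 \neq 0$), is analytic and non-zero around $s=0$, and $Z'(0)/Z(0) = \zeta'(0)/\zeta(0) + M'(0)/M(0)$.

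Next I would insert the classical values $\zeta(0) = -\tfrac12$ and $\zeta'(0) = -\tfrac12\log(2\pi)$, which give $\zeta'(0)/\zeta(0) = \log(2\pi)$, and recall from the proof of Theorem~\ref{Intro prop2.1} that $M(0) = 2K\,L(1,\chi_4)^{-1/2} = 4K/\sqrt\pi$, so that $Z(0) = \zeta(0)M(0) = -2K/\sqrt\pi$. The next step is to differentiate $\log M(s)$ at $s=0$ factor by factor: the factor $(1-2^{-s}+2^{-2s})$ contributes $-\log 2$ (its derivative at $s=0$ is $-\log 2$, its value is $1$); the factor $L(s+1,\chi_4)^{-1/2}$ contributes $-\tfrac12 L'(1,\chi_4)/L(1,\chi_4)$; the factor $(1-2^{-(s+1)})^{-1/2}$ contributes $-\tfrac12\log 2$; and the product over $p \equiv 3 (4)$ contributes $-\sum_{p \equiv 3 (4)} \log p/(p^2-1)$, termwise from $-\tfrac12\cdot 2p^{-2}\log p/(1-p^{-2})$, the resulting series converging absolutely. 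Summing, $M'(0)/M(0) = -\tfrac32\log 2 - \tfrac12 L'(1,\chi_4)/L(1,\chi_4) - \sum_{p \equiv 3 (4)} \log p/(p^2-1)$.

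Combining the two pieces, multiplying by $Z(0) = -2K/\sqrt\pi$, and using $-2\log(2\pi) + 3\log 2 = \log(2/\pi^2)$ then yields the claimed identity
\[
Z'(0) = \frac{K}{\sqrt\pi}\Bigl(\log\tfrac{2}{\pi^2} + \frac{L'(1,\chi_4)}{L(1,\chi_4)} + 2\sum_{p \equiv 3 (4)}\frac{\log p}{p^2-1}\Bigr) = \frac{K}{\sqrt\pi}\,\omega.
\]
The stated numerical value would then follow by inserting $L(1,\chi_4) = \pi/4$, the value of $K$ from~\eqref{LR-constant}, a standard rapidly convergent evaluation of $L'(1,\chi_4)$, and direct summation of the absolutely convergent prime series. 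I do not expect a genuine obstacle here: the whole argument is a short computation, and the only points that require care are the bookkeeping of signs, the doubled exponent $2(s+1)$ in the product over $p \equiv 3 (4)$, and invoking the non-elementary input $\zeta'(0) = -\tfrac12\log(2\pi)$.
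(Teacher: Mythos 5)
Your proposal is correct and follows essentially the same route as the paper: logarithmic differentiation of $Z(s)=\zeta(s)M(s)$ factor by factor, the classical values $\zeta(0)=-\tfrac12$, $\zeta'(0)=-\tfrac12\log(2\pi)$, and $M(0)=4K/\sqrt{\pi}$, with the same factor-by-factor contributions $-\log 2$, $-\tfrac12 L'/L(1,\chi_4)$, $-\tfrac12\log 2$, and $-\sum_{p\equiv 3\,(4)}\log p/(p^2-1)$. The only cosmetic difference is that the paper groups the last three factors into a single function $B(s)$ before differentiating; the arithmetic and the final simplification $-2\log(2\pi)+3\log 2=\log(2/\pi^2)$ are identical.
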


\begin{proof}
Firstly, observe $M(s)$ in~\eqref{def-G} and rewrite it as
$$
Z(s) = \zeta(s)M(s) = \zeta(s)A(s)B(s)
$$
where 
\begin{equation*}
\begin{split}
A(s) &= 1-2^{-s}+2^{-2s},\\
B(s) &= \bigg( L(s+1, \chi_4)\big(1-2^{-(s+1)}\big)\prod_{p \equiv 3 (4)}\bigg(1-\frac{1}{p^{2(s+1)}}\bigg)\bigg)^{-1/2}.
\end{split}
\end{equation*}
Then, we have
\[
\begin{split}
\frac{Z'}{Z}(0) &= \frac{\zeta'}{\zeta}(0) + \frac{A'}{A}(0) + \frac{B'}{B}(0),\\
\end{split}
\]
Hence, we need to compute $\zeta, A, A', B$ and $B'$ at $s = 0$. Indeed, the following special values for $\zeta$ are well-known:
\[
\begin{split}
\zeta(0) &= -\frac{1}{2}, \quad \zeta'(0) = -\frac{\log 2\pi}{2}.
\end{split}
\]
Moreover, for $A(s)$, we have $A(0) = 1, A'(0) = -\log 2.$
We may use the recursive formula for $B(s)$ to obtain $B(0) = {4K}/{\sqrt{\pi}}$ and
\[
\frac{B'}{B}(0) =  -\frac{1}{2}\left(\frac{L'(1, \chi_4)}{L(1, \chi_4)} + \log{2} + 2\sum_{p \equiv 3 (4)}\frac{\log{p}}{p^{2}-1}\right) = -\frac{1}{2}\left(\log{2} + \alpha_1 + \beta_1\right) 
\]
where we denote
\begin{align*} 
\alpha_1 = \frac{L'(1, \chi_4)}{L(1, \chi_4)},  = 0.2456096036\ldots, 
\quad \beta_1 = 2\sum_{p \equiv 3 (4)}\frac{\log{p}}{p^{2}-1} = 0.4574727064\ldots.
\end{align*}
One can compute the value of $\alpha_1$ by $L(1, \chi_4) = {\pi}/{4} \text{ and } L'(1, \chi_4) = 0.192901331574902\ldots$.
\end{proof}

\section{Proof of Theorem~\ref{Prop bound for D with large sets}}\label{Section proof prop bound for D with large sets}

In the heuristic leading to Conjecture~\ref{main-conjecture-intro}, we used Theorem~\ref{Prop bound for D with large sets} to justify  that the terms involving a sum of singular series for sets with three or more elements contribute to the error term. 
Theorem~\ref{Prop bound for D with large sets}  is an analogue of~\cite[Theorem~2]{MS} of Montgomery and Soundararajan adapted from primes to sums of two squares.
We now prove Theorem~\ref{Prop bound for D with large sets}, following closely the argument developed in~\cite{MS}, without giving all the details but insisting on the points that are different in the case of the sums of two squares.
To help with the comparison, we stay close to the notation used in loc.~cit.\! so we may use notation that differs from the rest of the paper, which should not cause trouble to the reader as this section is relatively independent from the rest of the paper.
We use the standard notation $e(x)= e^{2i\pi x}$.

\subsection{The singular series}
The first step in the proof is to write the singular series as an actual series (and not a Euler product), the way it was introduced by Hardy and Littlewood (see \cite[Lemma~3]{MS}).
We begin with giving a new expression for the local factors of the singular series. Let $\mathcal{D} = \left\{ d_1, \dots, d_k \right\} \subseteq {\Z}.$
We recall that
	for any $p \not\equiv 1 \mod 4$, we have
\begin{align*}
	\delta_{\mathcal{D}}(p) = \lim_{\alpha\rightarrow\infty} \frac{\# \lbrace 0 \leq a < p^{\alpha} : \forall d \in \mathcal{D}, a + d \equiv \square + \square \pmod{p^{\alpha}} \rbrace}{p^{\alpha}},
\end{align*}
and the singular series 
is defined by
\begin{align*}
	\mathfrak{S}( \mathcal{D} ) := \prod_{p \not\equiv 1 \mod 4} \frac{\delta_{\mathcal{D}}(p)}{(\delta_{ \{ 0 \}}(p))^k}.
\end{align*}
\begin{lem} \label{lemma-HL} 
	Let $\mathcal{D} = \lbrace d_1, \dots, d_k\rbrace\subseteq \Z$ 
	be a set with $k$ elements. 
	For any prime number $p \not\equiv 1 \pmod{4}$,
	one has
	$$ 
	\frac{\delta_{\mathcal{D}}(p)}{\delta_{\lbrace 0\rbrace}(p)^k} = \sum_{q_1, \dots, q_k \mid p^{\infty}} \prod_{i =1}^{k} \frac{\lambda_2(q_i)}{q_i} A_{\mathcal{D}}(q_1,\dots, q_k),
	$$
	where for any $q_1, \dots, q_k \in \N$, 
	$$ A_{\mathcal{D}}(q_1,\dots, q_k) = \sum_{\substack{a_1, \dots, a_k \\ 1\leq a_i \leq q_i,  (q_i,a_i) =1 \\ \sum_{i =1}^{k} \frac{a_i}{q_i} \in \Z }} e\left(\sum_{i =1}^{k} \frac{a_i d_i}{q_i} \right)\prod_{i =1}^k C(q_i,a_i),
	$$
		with $$C(q,a) = \begin{cases}
		1 &\text{ if } q \text{ is odd,} \\
        0 &\text{ if } 2\mid q \text{ but } 4 \nmid q, \\
	    2 e(-a/4)  &\text{ if } 4 \mid q.
	\end{cases}$$ 
	and $\lambda_2$ is the multiplicative function defined on the prime powers by $$
	\lambda_2(p^m)=\begin{cases}
		(-1)^m &\text{ if } p \text{ is odd,}\\
		1 &\text{ if } p=2.
	\end{cases}$$ 
\end{lem}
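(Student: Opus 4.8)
The plan is to compute the local density $\delta_{\mathcal D}(p)$ by harmonic analysis on $\Z/p^\alpha\Z$ and then let $\alpha\to\infty$. Fix $p\not\equiv 1\pmod 4$ and set $\rho_\alpha(m)=\mathbf 1[\,m\equiv\square+\square\pmod{p^\alpha}\,]$, viewed as a function on $\Z/p^\alpha\Z$. Expanding $\rho_\alpha$ in additive characters and grouping the frequencies according to the reduced form $a/q$ (with $q=p^\beta\mid p^\alpha$ and $(a,q)=1$) gives
\begin{equation*}
\rho_\alpha(m)=\sum_{q\mid p^\alpha}\ \sum_{\substack{a\bmod q\\(a,q)=1}}c_\alpha(q,a)\,e(am/q),\qquad c_\alpha(q,a)=\frac{1}{p^\alpha}\sum_{m\bmod p^\alpha}\rho_\alpha(m)\,e(-am/q).
\end{equation*}
Inserting this into $\delta_{\mathcal D}(p)=\lim_{\alpha\to\infty}p^{-\alpha}\sum_{a\bmod p^\alpha}\prod_{i=1}^k\rho_\alpha(a+d_i)$, multiplying out the product and carrying out the sum over $a$ — which contributes $p^\alpha$ when $\sum_i a_i/q_i\in\Z$ and $0$ otherwise — reduces the statement to two tasks: (i) computing $\lim_{\alpha\to\infty}c_\alpha(p^\beta,a)$ for each fixed $\beta$ and $a$; and (ii) justifying that the limit may be taken termwise inside the sum over $q_1,\dots,q_k$.

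Step (i) is the crux, and is where the arithmetic of sums of two squares enters through the explicit description of $\{m\bmod p^\alpha:\rho_\alpha(m)=1\}$. For $p\equiv3\pmod4$ this set consists of $0$ together with the ``shells'' $p^{2j}u$, $u\in(\Z/p^{\alpha-2j}\Z)^\times$, for $2j<\alpha$ (this uses that every unit is a sum of two squares modulo $p^\beta$, while a residue of odd $p$-adic valuation never is). Substituting, the contribution of the shell at level $j$ to $c_\alpha(p^\beta,a)$ reduces to a Ramanujan sum over the units modulo $p^{\beta-2j}$, hence is independent of $a$ and vanishes unless $\beta-2j\in\{0,1\}$; summing the resulting geometric series in $j$ and letting $\alpha\to\infty$ yields
\begin{equation*}
\lim_{\alpha\to\infty}c_\alpha(p^\beta,a)=\delta_{\{0\}}(p)\,\frac{(-1)^\beta}{p^\beta}=\delta_{\{0\}}(p)\,\frac{\lambda_2(p^\beta)}{p^\beta}\,C(p^\beta,a),
\end{equation*}
since $C(p^\beta,a)=1$ for $p$ odd. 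For $p=2$ I would use the analogous classification: writing $m=2^vu$ with $u$ odd, $\rho_\alpha(m)=1$ iff $v$ is large, or $u\equiv1\pmod4$ once the residual modulus is at least $4$; the mod-$4$ condition on $u$ is precisely what produces the phase $2e(-a/4)$ in $C(2^\beta,a)$ for $\beta\ge2$ and the vanishing at $\beta=1$ (the frequency $a/2$ cannot detect that condition). A short count with these shells then gives $\lim_{\alpha\to\infty}c_\alpha(2^\beta,a)=\delta_{\{0\}}(2)\,\frac{\lambda_2(2^\beta)}{2^\beta}\,C(2^\beta,a)$, with $\lambda_2(2^m)=1$.

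Granting (i), step (iii) is assembly: since $c_\alpha(q_i,a_i)\to\delta_{\{0\}}(p)\frac{\lambda_2(q_i)}{q_i}C(q_i,a_i)$ for each $i$, passing to the limit gives
\begin{align*}
\delta_{\mathcal D}(p)&=\delta_{\{0\}}(p)^k\sum_{q_1,\dots,q_k\mid p^\infty}\ \prod_{i=1}^k\frac{\lambda_2(q_i)}{q_i}\sum_{\substack{1\le a_i\le q_i,\ (a_i,q_i)=1\\\sum_i a_i/q_i\in\Z}}e\!\Big(\sum_{i}\frac{a_id_i}{q_i}\Big)\prod_i C(q_i,a_i)\\
&=\delta_{\{0\}}(p)^k\sum_{q_1,\dots,q_k\mid p^\infty}\prod_{i=1}^k\frac{\lambda_2(q_i)}{q_i}\,A_{\mathcal D}(q_1,\dots,q_k),
\end{align*}
which is the asserted identity after dividing by $\delta_{\{0\}}(p)^k$. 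I expect the main obstacles to be, first, carrying out the modulo-$2^\alpha$ bookkeeping cleanly enough to pin down $C(q,a)$ exactly (the odd-prime case being comparatively transparent), and second, justifying the interchange of limit and summation in (ii): here one uses that at finite $\alpha$ the identity is a finite sum, that $|c_\alpha(q,a)|\ll_p q^{-1}$ uniformly in $\alpha$ and $a$, and that for fixed $q_1,\dots,q_k$ the relation $\sum_i a_i/q_i\in\Z$ leaves at most $\prod_{i\ge2}\phi(q_i)$ choices of $(a_1,\dots,a_k)$, so that with $|C(q,a)|\le2$ the tail over large $\max_i q_i$ is absolutely summable and dominated convergence applies. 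This follows the scheme of \cite[Lemma~3]{MS}, with the passage from Euler product to Dirichlet series there replaced by the shell decomposition special to sums of two squares.
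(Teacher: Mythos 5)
Your argument is correct and follows essentially the same route as the paper: both expand the indicator of the sums-of-two-squares residues modulo $p^\alpha$ into additive characters grouped by reduced denominators (the paper's explicit formula for $\mathbf{1}_{S_{p,\alpha}}$ is exactly your Fourier-coefficient computation, giving $c_\alpha(p^\gamma,a)=((-p)^{-\gamma}-p^{-\alpha})/(1+\tfrac1p)$ for odd $p$ and $C(2^\gamma,b)2^{-\gamma-1}-2^{-\alpha}$ for $p=2$), then use orthogonality in the shift variable to produce the condition $\sum_i a_i/q_i\in\Z$ and pass to the limit under a uniform bound on the coefficients. One small wording slip: for odd $p$ the shell at level $j$ contributes whenever $\beta-2j\le 1$, not only when $\beta-2j\in\{0,1\}$ --- the shells with $2j>\beta$ are precisely what supply the geometric series you then sum --- but your stated value of $\lim_{\alpha\to\infty}c_\alpha(p^\beta,a)$ is the correct one.
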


\begin{proof}
	Let $p \equiv 3 \pmod{4}$ be a prime number. 
	For $\mathcal{D} = \lbrace d_1, \dots, d_k\rbrace\subseteq \Z$ a set with $k$ elements, we deduce from \cite[Proposition~5.1, Proposition~5.3(a) and (5.4)]{FKR} that
	\begin{align*}
		\delta_{\mathcal{D}}(p) = \lim_{\alpha\rightarrow\infty} p^{-\alpha} \sum_{x=1}^{p^{\alpha}} \prod_{i=1}^{k} \mathbf{1}_{S_{p,\alpha}}(x+d_i)
	\end{align*}
	where $\mathbf{1}_{S_{p,\alpha}}$ is the characteristic function of the set $S_{p,\alpha} =  \lbrace p^{2\beta}m  : 0\leq \beta< \tfrac{\alpha}{2}, m \not\equiv 0 \pmod{p} \rbrace$.
	In particular, for $\alpha$ even, following the idea of the proof of \cite[Lemma~2]{MV}, we write that
	\begin{align*}
		\mathbf{1}_{S_{p,\alpha}}(x)   &=  \sum_{\beta =0}^{\frac{\alpha}{2} -1}\sum_{s \mid p} \frac{\mu(s)}{p^{2\beta}s} \sum_{a =1}^{p^{2\beta}s} e\left(\frac{ax}{p^{2\beta}s}\right) \\
		&=  \sum_{\beta =0}^{\frac{\alpha}{2} -1} \frac{1}{p^{2\beta}}\left\lbrace\left(1 - \frac1p\right) \sum_{r \mid p^{2\beta}}\sum_{a \in (\Z/r\Z)^*} e\left(\frac{ax}{r}\right) -  \frac1p\sum_{a \in (\Z/p^{2\beta+1}\Z)^*} e\left(\frac{ax}{p^{2\beta+1}}\right) \right\rbrace\\
		&=  \sum_{\gamma=0}^{\alpha-2}\left\lbrace \sum_{\beta = \lceil\frac{\gamma}{2}\rceil}^{\frac{\alpha}{2} -1} \frac{1}{p^{2\beta}}\left(1 - \frac1p\right) \sum_{a \in (\Z/p^{\gamma}\Z)^*} e\left(\frac{ax}{p^{\gamma}}\right)\right\rbrace -  \sum_{\beta = 0}^{\frac{\alpha}{2} -1}\frac{1}{p^{2\beta+1}}\sum_{a \in (\Z/p^{2\beta +1}\Z)^*} e\left(\frac{ax}{p^{2\beta+1}}\right)\\
		&=  \sum_{\gamma=0}^{\alpha-1}\frac{(-p)^{-\gamma} - p^{-\alpha}}{1 + \frac1p} \sum_{a \in (\Z/p^{\gamma}\Z)^*} e\left(\frac{ax}{p^{\gamma}}\right),\\
	\end{align*}
	where we used the fact that $\alpha$ is even in the last line. 	Thus, we have,
	\begin{align*}
		\frac{\delta_{\mathcal{D}}(p)}{\delta_{\lbrace 0\rbrace}(p)^k}  &=  \lim_{\substack{\alpha\rightarrow\infty \\ \alpha \text{ even}}} p^{-\alpha} \sum_{x=1}^{p^{\alpha}} \prod_{i=1}^{k} \left(  \sum_{\gamma_i=0}^{\alpha-1}((-p)^{-\gamma_i} - p^{-\alpha}) \sum_{a_i \in (\Z/p^{\gamma_i}\Z)^*} e\left(\frac{(x+d_i)a_i}{p^{\gamma_i}}\right) \right),
	\end{align*}
	since  $\delta_{\lbrace 0\rbrace}(p) = (1+\tfrac{1}{p})^{-1}$ \cite[Proposition~5.3(c)]{FKR}.
	We swap the sums and begin with the sum over~$x$, as $\gamma_1, \dots, \gamma_k \leq \alpha -1$, we have
	\begin{align*}
		p^{-\alpha}\sum_{x=1}^{p^{\alpha}}	e\left(x \sum_{i=1}^{k}\frac{a_i}{p^{\gamma_i}}\right) = \begin{cases}
			1 &\text{ if }  \sum_{i=1}^{k}\frac{a_i}{p^{\gamma_i}} \in {\Z} \\
			0 &\text{ otherwise.}
		\end{cases}
	\end{align*}
	This yields
	\begin{align*}
		\frac{\delta_{\mathcal{D}}(p)}{\delta_{\lbrace 0\rbrace}(p)^k}  &=  \lim_{\substack{\alpha\rightarrow\infty \\ \alpha \text{ even}}}   \sum_{\gamma_1=0}^{\alpha-1} \dots  \sum_{\gamma_k=0}^{\alpha-1} \prod_{i=1}^{k} ((-p)^{-\gamma_i} - p^{-\alpha})A_{\mathcal{D}}(p^{\gamma_1},\dots,p^{\gamma_k}).
	\end{align*}
	We obtain the formula announced in the Lemma for $p \equiv 3 \pmod 4$ by taking the limit $\alpha \rightarrow \infty$, and using the bound $ \lvert A_{\mathcal{D}}(q_1,\dots, q_k) \rvert \leq \frac{q_1\dots q_k}{[q_1,\dots,q_k]}$ (see~\eqref{eq bound A}).

	The proof is similar for $p=2$.	
	By \cite[Proposition~5.2(a) and (5.3)]{FKR}, for $\alpha\geq 2$ we can take $S_{2,\alpha} =  \lbrace 2^{\beta}m  : 0\leq \beta< \alpha-1, m \equiv 1 \pmod{4} \rbrace$,
	and \cite[Proposition~5.2(c)]{FKR} gives $\delta_{\lbrace 0\rbrace}(2) = \frac12$.
	
	We write that
	\begin{align*}
		\mathbf{1}_{S_{2,\alpha}}(x) &=  \sum_{\beta =0}^{\alpha -2}2^{-\beta - 2} \sum_{a =1}^{2^{\beta}} e\left(\frac{ax}{2^{\beta}}\right) \sum_{t =1}^{4} e\left((\tfrac{x}{2^{\beta}} -1) \frac{t}{4}\right)  \\
		&=  \sum_{\beta =0}^{\alpha -2}2^{-\beta - 2} \sum_{r \mid2^{\beta +2}} \sum_{b \in (\Z/r\Z)^*} e\left(\frac{(x - 2^{\beta})b}{r}\right)   \\
		&= \sum_{\gamma =0}^{\alpha}  \sum_{b \in (\Z/2^{\gamma}\Z)^*} e\left(\frac{xb}{2^{\gamma}}\right)  \sum_{\beta = \max\lbrace 0, \gamma -2 \rbrace}^{\alpha -2}2^{-\beta - 2} e\left( - 2^{\beta - \gamma}b\right) \\
		&= \sum_{\gamma =0}^{\alpha}  \sum_{b \in (\Z/2^{\gamma}\Z)^*} e\left(\frac{xb}{2^{\gamma}}\right)  \left( C(2^{\gamma},b)2^{-\gamma -1} - 2^{-\alpha}  \right), 
	\end{align*}
	note that in the sum we always have $(b,2)=1$ so $1 + e(\tfrac{-b}{2})  =0$. 
	Thus, we have
	\begin{align*}
		\frac{\delta_{\mathcal{D}}(2)}{\delta_{\lbrace 0\rbrace}(2)^k}  &=
		 \lim_{\alpha\rightarrow\infty} 2^{-\alpha+k} \sum_{x=1}^{2^{\alpha}} \prod_{i=1}^{k} \mathbf{1}_{S_{2,\alpha}}(x+d_i)
		\\
		&=  \lim_{\alpha\rightarrow\infty} 2^{-\alpha} \sum_{x=1}^{2^{\alpha}} \prod_{i=1}^{k} \left( \sum_{\gamma_i =0}^{\alpha}  \sum_{b_i \in (\Z/2^{\gamma_i}\Z)^*} e\left(\frac{(x+d_i)b_i}{2^{\gamma_i}}\right)  \left( C(2^{\gamma_i},b_i)2^{-\gamma_i} - 2^{-\alpha +1}  \right) \right)
	\end{align*}
	Exchanging the sums and computing the sum over $x$ first, this yields
	\begin{align*}
		\frac{\delta_{\mathcal{D}}(2)}{\delta_{\lbrace 0\rbrace}(2)^k}  &=  \lim_{\alpha\rightarrow\infty}   \sum_{\gamma_1=0}^{\alpha} \dots  \sum_{\gamma_k=0}^{\alpha} \prod_{i=1}^{k} 2^{-\gamma_i} A_{\mathcal{D}}(2^{\gamma_1},\dots,2^{\gamma_k}) ,
	\end{align*}
	which gives the formula announced in the Lemma for $p=2$.
\end{proof}

We now give the analogue of \cite[(44) and (45)]{MS}. {The main difference between the case of primes and the case of sum of two squares is that the local probabilities $\delta_{\mathcal{D}}(p)$ at each prime $p$ involve all powers of $p$, and then the sum over $q_1, \dots, q_k$ in Lemma~\ref{lemma-HL} runs over all integers  (and not only square-free integers). We then approximate $\mathfrak{S}(\mathcal{D})$ by taking all integers supported on primes $p \leq y$ and appearing with power at most $N$, for the appropriate values of $y$ and $N$.}

\begin{lem}\label{Eq HL approx S-lemma} 	
Let  $\mathcal{D} \subseteq\N\cap[1,h]$ be a set with $k$ elements.
	Let $y >h$, $N\geq 4\log y$, and $P_y := \prod\limits_{\substack{p\leq y\\ p\not \equiv 1 \pmod{4}}}p$.
	Then,
	\begin{align} \label{first-EQ}
		&\mathfrak{S}(\mathcal{D}) = \sum_{q_1, \dots, q_k \mid P_y^{N}}	\prod_{i =1}^{k}\frac{\lambda_2(q_i)}{q_i}  A_{\mathcal{D}}(q_1,\dots, q_k) + O_k\big(y^{-1}(\log y)^{k-1}\big) \\
	\text{and } \qquad&
	 \label{Eq HL approx S0} 
		\mathfrak{S}_0(\mathcal{D}) = \sum_{\substack{q_1, \dots, q_k \mid P_y^{N} \\ q_i >1}}	\prod_{i =1}^{k}\frac{\lambda_2(q_i)}{q_i}  A_{\mathcal{D}}(q_1,\dots, q_k) + O_k\big(y^{-1}(\log y)^{k-1}\big),
	\end{align}
	where $A_{\mathcal{D}}(q_1,\dots, q_k)$ is defined in Lemma~\ref{lemma-HL}.
\end{lem}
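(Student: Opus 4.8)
The plan is to convert the Euler product defining $\mathfrak{S}(\mathcal{D})$ into an absolutely convergent series over $k$-tuples of integers, and then truncate. For the first step I would start from Lemma~\ref{lemma-HL}, which writes each local factor $g_p := \delta_{\mathcal{D}}(p)/\delta_{\{0\}}(p)^k$ (for $p \not\equiv 1 \pmod 4$) as $\sum_{q_1, \ldots, q_k \mid p^\infty} \prod_{i} \frac{\lambda_2(q_i)}{q_i} A_{\mathcal{D}}(q_1, \ldots, q_k)$, and check that the summand $\prod_i \frac{\lambda_2(q_i)}{q_i} A_{\mathcal{D}}(q_1, \ldots, q_k)$ is jointly multiplicative in $(q_1, \ldots, q_k)$: splitting each $q_i$ into its prime-power components and applying the Chinese Remainder Theorem to the defining sum of $A_{\mathcal{D}}$ (through the partial-fraction expansion of $\sum_i a_i/q_i$) shows it factors over primes, exactly as in~\cite{MS}. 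Multiplying the local expansions over all $p \not\equiv 1 \pmod 4$ and using $\delta_{\mathcal{D}}(p) = 1 = \delta_{\{0\}}(p)$ for $p \equiv 1 \pmod 4$ then yields
\begin{equation*}
\mathfrak{S}(\mathcal{D}) = \sum_{q_1, \ldots, q_k} \prod_{i=1}^k \frac{\lambda_2(q_i)}{q_i} A_{\mathcal{D}}(q_1, \ldots, q_k),
\end{equation*}
the sum over all $k$-tuples of positive integers supported on primes $p \not\equiv 1 \pmod 4$, absolute convergence coming from the bound $|A_{\mathcal{D}}(q_1, \ldots, q_k)| \le \prod_i q_i / [q_1, \ldots, q_k]$ of~\eqref{eq bound A}.

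For the truncation, the same multiplicativity shows that the finite sum in~\eqref{first-EQ} equals $\prod_{p \le y,\, p \not\equiv 1\, (4)} g_p^{(N)}$, where $g_p^{(N)}$ is the partial local factor obtained by restricting to $q_i \mid p^N$. The difference $\mathfrak{S}(\mathcal{D}) - \prod_{p \le y} g_p^{(N)}$ is then controlled by two families of omitted tuples: those in which some $q_i$ is divisible by a prime $> y$, and those in which some $q_i$ is divisible by $p^{N+1}$ for some $p \le y$. Both are bounded using $|A_{\mathcal{D}}| \le \prod_i q_i/[q_1, \ldots, q_k]$. The second family contributes $O_k\big((\sum_{p \le y} p^{-N})(\log y)^{O_k(1)}\big)$, which is $O_k(y^{-1})$ since $N \ge 4\log y$ forces $p^{-N} \le 2^{-N} \le y^{-4\log 2}$. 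For the first family I would invoke the hypotheses $\mathcal{D} \subseteq [1,h]$ and $y > h$: then the elements of $\mathcal{D}$ are distinct modulo every prime $p > h$, which forces $g_p = 1 + O_k(p^{-2})$ there, so $\prod_{p > y} g_p = 1 + O_k\big(\sum_{p>y} p^{-2}\big) = 1 + O_k(y^{-1}(\log y)^{-1})$; combined with the crude bound $\prod_{p \le y} g_p^{(N)} \ll_k (\log y)^k$ this gives the stated error $O_k(y^{-1}(\log y)^{k-1})$, as in the treatment of~\cite[(44)]{MS}.

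For~\eqref{Eq HL approx S0}, I would use that in the series for $\mathfrak{S}(\mathcal{D})$ setting $q_i = 1$ for the indices in a subset is, by the definition of $A_{\mathcal{D}}$, the same as deleting the corresponding $d_i$; hence $\mathfrak{S}(\mathcal{D}) = \sum_{\mathcal{T} \subseteq \mathcal{D}} \widetilde{S}_0(\mathcal{T})$ with $\widetilde{S}_0(\mathcal{T}) := \sum_{q_i > 1}\prod_i \frac{\lambda_2(q_i)}{q_i} A_{\mathcal{T}}(q_1, \ldots)$, and Möbius inversion identifies $\widetilde{S}_0$ with $\mathfrak{S}_0$ as given by~\eqref{altdef-S0q}. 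Truncating the ``all $q_i > 1$'' series by the same two-family argument (with strictly fewer terms) then produces~\eqref{Eq HL approx S0} with the same error term.

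I expect the main obstacle to be the tail estimate: making the two truncations — large primes, large prime powers — interact cleanly with the bound on $A_{\mathcal{D}}$ while keeping all constants uniform in $\mathcal{D}$; this is why the argument, following~\cite{MS}, is presented without tracking every constant. By contrast, the joint multiplicativity of $A_{\mathcal{D}}$, though the one genuinely new point relative to the squarefree (prime $k$-tuple) setting, is a routine Chinese Remainder Theorem computation.
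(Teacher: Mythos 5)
Your proposal follows essentially the same route as the paper: joint multiplicativity of $A_{\mathcal{D}}$ via the Chinese Remainder Theorem, the tail estimate $\prod_{p>y}\delta_{\mathcal{D}}(p)/\delta_{\{0\}}(p)^k = 1+O_k((y\log y)^{-1})$ from \cite{FKR} (using $y>h$), truncation of each local factor at $p^N$ with the bound~\eqref{eq bound A}, and the deduction of~\eqref{Eq HL approx S0} from~\eqref{first-EQ} via $A_{\lbrace d_1,\dots,d_k\rbrace}(q_1,\dots,q_{k-1},1)=A_{\lbrace d_1,\dots,d_{k-1}\rbrace}(q_1,\dots,q_{k-1})$ and M\"obius inversion. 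One caveat: your opening claim that the unrestricted series over all $k$-tuples converges absolutely by~\eqref{eq bound A} alone is not justified — e.g.\ for $k=2$ the diagonal terms $q_1=q_2=p$ are only bounded by $2/p$ under that estimate, and $\sum_p 1/p$ diverges; the actual smallness of those terms comes from Ramanujan-sum cancellation, which is exactly what the estimate $g_p=1+O_k(p^{-2})$ for $p>h$ encodes. This is why the paper never forms the full infinite series but instead estimates $\prod_{p>y}g_p$ directly and expands only the finite product over $p\le y$; since your treatment of the large-prime ``family'' does precisely that, the slip is in the framing rather than in the estimates, and the proof goes through.
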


\begin{proof}
	First, it follows from the Chinese Remainder Theorem that for $q_1,\dots, q_k, q'_1,\dots, q'_k \in {\N}$ satisfying $\left(\prod_{i =1}^{k}q_i, \prod_{i =1}^{k}q'_i\right)=1$, one has $A_{\mathcal{D}}(q_1,\dots, q_k)A_{\mathcal{D}}(q'_1,\dots, q'_k) = A_{\mathcal{D}}(q_1q'_1,\dots, q_1q'_1)$. 
	Since $y > h \geq \max \mathcal{D}$, from \cite[Proposition~5.3.(c)]{FKR} we deduce 
	\begin{align*}
		\prod_{p >y} \frac{\delta_{\mathcal{D}}(p)}{\delta_{\lbrace 0\rbrace}(p)^k}  = \prod_{p >y} (1 + \tfrac{1}{p})^{k-1}(1 - \tfrac{k-1}{p}) = \prod_{p >y} (1 + O_k(p^{-2})) = 1 + O_k((y \log y)^{-1}). 
	\end{align*}
	By definition we have $\delta_{\mathcal{D}}(p) \leq 1$ for all prime number $p$, thus
	\begin{align*}
		\prod_{p \leq y} \frac{\delta_{\mathcal{D}}(p)}{\delta_{\lbrace 0\rbrace}(p)^k}  \leq 2^k \prod_{\substack{p \leq y \\ p \equiv 3 \pmod{4}}} (1 + \tfrac{1}{p})^{k} \ll_k (\log y)^k, 
	\end{align*}
	which gives
	\begin{align*}
		\mathfrak{S}(\mathcal{D}) = \prod_{p \leq y} \frac{\delta_{\mathcal{D}}(p)}{\delta_{\lbrace 0\rbrace}(p)^k} + O(y^{-1}(\log y)^{k-1}).
	\end{align*}
	Using Lemma~\ref{lemma-HL} and the bound $ \lvert A_{\mathcal{D}}(q_1,\dots, q_k) \rvert \leq 2^{\frac{k}{2}}\frac{q_1\dots q_k}{[q_1,\dots,q_k]}$ (see~\eqref{eq bound A}), we have 
	\begin{align*}
		\frac{\delta_{\mathcal{D}}(p)}{\delta_{\lbrace 0\rbrace}(p)^k}  
		&= \sum_{q_1, \dots, q_k \mid p^{N}}	\prod_{i =1}^{k}\frac{\lambda_2(q_i)}{q_i}  A_{\mathcal{D}}(q_1,\dots, q_k) + O_k\big( \sum_{p^{N+1} \mid q_1 \mid p^{\infty}}\sum_{q_2, \dots, q_k \mid p^{\infty}} \frac{1}{[q_1,\dots,q_k]} \big) \\
		&= \sum_{q_1, \dots, q_k \mid p^{N}}	\prod_{i =1}^{k}\frac{\lambda_2(q_i)}{q_i}  A_{\mathcal{D}}(q_1,\dots, q_k) + O_k\big( \sum_{n=N+1}^{\infty}\frac{(n+1)^{k-1}}{p^n}\big) \\
		&= \sum_{q_1, \dots, q_k \mid p^{N}}	\prod_{i =1}^{k}\frac{\lambda_2(q_i)}{q_i}  A_{\mathcal{D}}(q_1,\dots, q_k) + O_k\big( p^{-N-1}(N+2)^{k-1} \big) .
	\end{align*}
	Moreover, using again the bound~\eqref{eq bound A}, we have
	\begin{align}\label{eq bound on sum with lcm}
		\prod_{p \leq y}\sum_{q_1, \dots, q_k \mid p^{N}}	\prod_{i =1}^{k}\frac{\lambda_2(q_i)}{q_i}  A_{\mathcal{D}}(q_1,\dots, q_k)
		&\ll_k
		\sum_{q_1, \dots, q_k \mid P_y^{N}}	\prod_{i =1}^{k}\frac{\lvert \lambda_2(q_i)\rvert}{q_i} \frac{ q_1\cdot\dots\cdot q_k}{[q_1,\dots,q_k]}  \nonumber \\ 
		&\leq \sum_{q_1, \dots, q_k \mid P_y^{N} } \frac{1}{[q_1,\dots,q_k]} \nonumber \\
		&\ll_k \prod_{p\leq y} \sum_{n =0}^{N} \frac{(n+1)^{k}}{p^n}  \leq  \prod_{p\leq y}\big(1 + \frac{C_k}p\big) \ll_\varepsilon y^{\varepsilon} 
	\end{align}
	for some constant $C_k >0$, for any $\varepsilon >0$.
	Finally,
	\begin{align*}
		\prod_{p \leq y} \frac{\delta_{\mathcal{D}}(p)}{\delta_{\lbrace 0\rbrace}(p)^k} &= \sum_{q_1, \dots, q_k \mid P_y^{N}}	\prod_{i =1}^{k}\frac{\lambda_2(q_i)}{q_i}  A_{\mathcal{D}}(q_1,\dots, q_k) + O_{k,\varepsilon}\big(y^{\varepsilon} \sum_{\substack{ q \mid P_y \\ q \neq 1}} q^{-N-1}N^{(k-1)\omega(q)} \big) \\
		&= \sum_{q_1, \dots, q_k \mid P_y^{N}}	\prod_{i =1}^{k}\frac{\lambda_2(q_i)}{q_i}  A_{\mathcal{D}}(q_1,\dots, q_k) + O_{k,\varepsilon}\big( y^{\varepsilon}  2^{-N}N^{k-1} \big).
	\end{align*}
	Choosing $\frac{\log y}{\log 2}(1+ \varepsilon) < N$ gives~\eqref{first-EQ}.
	We deduce~\eqref{Eq HL approx S0} 
	 from~\eqref{first-EQ} using the formula 
	 $$\mathfrak{S}_0(\mathcal{D}) = \sum_{\mathcal{T} \subseteq\mathcal{D}} (-1)^{\lvert \mathcal{D} \setminus \mathcal{T}\rvert} \mathfrak{S}(\mathcal{T})$$ and the relation
	$A_{\lbrace d_1, \dots, d_k\rbrace}(q_1,\dots, q_{k-1},1) = A_{\lbrace d_1, \dots, d_{k-1}\rbrace}(q_1,\dots, q_{k-1})$.
\end{proof}

In particular, 
taking $y= h^{k+1}$ in~\eqref{Eq HL approx S0}, one has
\begin{align} \label{constant-with-ET}
	\sum_{\substack{ \mathcal{D} \subseteq[1,h] \\ \lvert\mathcal{D}\rvert = k}}\mathfrak{S}_0(\mathcal{D}) = \sum_{\substack{q_1, \dots, q_k \mid P_y^{N} \\ q_i >1}}	\prod_{i =1}^{k}\frac{\lambda_2(q_i)}{q_i}  \sum_{\substack{ \mathcal{D} \subseteq[1,h] \\ \lvert\mathcal{D}\rvert = k}}A_{\mathcal{D}}(q_1,\dots, q_k) + o_k(1).
\end{align}

\subsection{An easier version of the main term}
To continue with notation similar to \cite{MS}, we define
\begin{align}\label{def V}
	V_k(y,N,h) = \sum_{\substack{q_1, \dots, q_k \mid P_y^{N} \\ q_i >1}}	\prod_{i =1}^{k}\frac{\lambda_2(q_i)}{q_i}  \sum_{1\leq d_1, \dots, d_k \leq h}A_{( d_1, \dots, d_k)}(q_1,\dots, q_k),
\end{align}
where we remark that the difference with the main term above is that $d_1, \dots, d_k$ do not have to be distinct.
Let us introduce some other useful notations and results from \cite{MS}.
For $\alpha \in \R$, we denote
\begin{align}\label{def E F}
	E_h(\alpha) = \sum_{d =1}^{h} e(\alpha d)  \quad \text{ and } \quad F_h(\alpha)= \min(h,\lVert \alpha\rVert^{-1}),
\end{align}
where $\lVert \cdot \rVert$ is the distance to the nearest integer, so that we have $\lvert E_h(\alpha)\rvert \leq F_h(\alpha)$.
We have (see \cite[(54)]{MS})
\begin{align}\label{bound sum F squared}
	\sum_{a=1}^{q-1} F_h(\tfrac{a}{q})^2 \ll q \min(q,h). 
\end{align}
We will also use the following result from the work of Montgomery and Vaughan \cite{MVbas} and which is an analogue of \cite[Lemma~1]{MS} that applies to the case of non necessarily square-free numbers.
\begin{lem}[Theorem~1 of \cite{MVbas}]\label{Lem MV basic ineq}
	Let $k\geq 2$ be an integer and for $1\leq i\leq k$, let $q_i \in \N$ and $G_i$ be a $1$-periodic complex valued function. 
	Then, we have
	\begin{align*}
		\Big\lvert \sum_{\substack{a_{1}, \dots, a_k \\ 1\leq a_i \leq q_i,  (q_i,a_i) =1 \\  \sum_{i =1}^{k} \frac{a_i}{q_i} \in \Z }} \prod_{i=1}^{k} G_i(\tfrac{a_i}{q_i}) \Big\rvert
		\leq \frac{1}{[q_1,\dots,q_k]}\prod_{i =1}^{k} \big( q_i \sum_{\substack{ 1\leq a_i \leq q_i \\  (q_i,a_i) =1  }} \lvert G_i(\tfrac{a_i}{q_i})\rvert^2 \big)^{\frac12}.
	\end{align*}
\end{lem}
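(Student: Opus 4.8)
The plan is to follow \cite{MVbas}, reducing first to moduli that are powers of a single prime and then treating that case by hand. Throughout, I would identify each $G_i$ with a function on $\mathbb{Q}/\mathbb{Z}$ supported on the elements of exact denominator $q_i$, so that the left-hand side of the lemma is $S:=\sum\prod_{i=1}^{k}G_i(\theta_i)$, the sum running over tuples $(\theta_1,\dots,\theta_k)$ with each $\theta_i$ of exact denominator $q_i$ and $\theta_1+\dots+\theta_k=0$ in $\mathbb{Q}/\mathbb{Z}$; I write $B_i:=\sum_{(a,q_i)=1}|G_i(a/q_i)|^2$, so the claimed bound reads $|S|\leq\frac{1}{[q_1,\dots,q_k]}\prod_{i=1}^{k}(q_iB_i)^{1/2}$.

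\textbf{Reduction to prime powers.} Using $\mathbb{Q}/\mathbb{Z}\cong\bigoplus_{p}\mathbb{Q}_p/\mathbb{Z}_p$, an element of exact denominator $q$ decomposes uniquely into its $p$-primary parts, of exact denominators $p^{v_p(q)}$, and since a finite sum of rationals with pairwise coprime denominators is an integer if and only if each summand is, the constraint $\sum_i\theta_i=0$ splits into one constraint for each prime $p\mid L:=[q_1,\dots,q_k]$, imposed on the $p$-primary parts. I would then peel off the primes $p\mid L$ one at a time: for fixed values of the $p'$-primary parts ($p'\neq p$), the inner sum over the $p$-primary parts is again an instance of the lemma, with moduli $p^{v_p(q_i)}$ and with $G_i$ replaced by the corresponding one-variable slice; applying the prime-power case (which I would establish first) pulls out the scalar $p^{-v_p(L)}\prod_i p^{v_p(q_i)/2}$ and replaces each slice by its $\ell^2$-norm. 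After doing this for every $p\mid L$, the pulled-out scalars multiply to $\bigl(\prod_i q_i\bigr)^{1/2}/L$, and the iterated $\ell^2$-norms reassemble---by Fubini across the primary components together with the Chinese Remainder bijection on unit groups---into $B_i^{1/2}$. Hence it suffices to prove the lemma when every $q_i$ is a power of one fixed prime $p$.

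\textbf{The prime-power case.} Let $q_i=p^{v_i}$ and reorder so $v_1\geq v_2\geq\dots\geq v_k$; put $v:=v_1=v_p(L)$. If $v=0$ there is nothing to do. If $v\geq1$ and $v_1>v_2$, then for every admissible tuple the quantity $\sum_i a_i/p^{v_i}$ has exact denominator $p^{v_1}>1$ (its leading $p$-adic term cannot be cancelled by the others), so the summation is empty and $S=0$. In the remaining case $v_1=v_2=v$, I would detect the constraint $\sum_i a_i/p^{v_i}\in\mathbb{Z}$ by additive characters modulo $p^v$: setting $H_i(t):=\sum_{(a,q_i)=1}G_i(a/q_i)\,e(ta/q_i)$, this gives $S=p^{-v}\sum_{t\bmod p^v}\prod_{i=1}^{k}H_i(t)$. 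Then I would bound $|S|\leq p^{-v}\bigl(\prod_{i=3}^{k}\max_t|H_i(t)|\bigr)\sum_{t\bmod p^v}|H_1(t)\,H_2(t)|$, use Cauchy--Schwarz and orthogonality to get $\sum_{t\bmod p^v}|H_1(t)\,H_2(t)|\leq\prod_{i=1}^{2}\bigl(\sum_{t\bmod p^v}|H_i(t)|^2\bigr)^{1/2}=p^{v}(B_1B_2)^{1/2}$ (the equality $\sum_{t\bmod p^v}|H_i(t)|^2=p^vB_i$ for $i=1,2$ uses $v_1=v_2=v$), and use the trivial bound $\max_t|H_i(t)|\leq\phi(q_i)^{1/2}B_i^{1/2}\leq(q_iB_i)^{1/2}$ for $i\geq3$. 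Multiplying, the factor $p^v$ coming from the two $\ell^2$-norms cancels $p^{-v}=1/[q_1,\dots,q_k]$ exactly, and what remains is precisely $|S|\leq\frac{1}{[q_1,\dots,q_k]}\prod_{i=1}^{k}(q_iB_i)^{1/2}$.

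\textbf{Expected main obstacle.} The whole argument is Cauchy--Schwarz; the one genuinely delicate point is that the power of $p$ must come out \emph{exactly}, with the $\ell^2$--$\ell^2$ step on $H_1,H_2$ producing $p^v$ that has to cancel the $1/L=p^{-v}$ and nothing more. This is what forces one to pair the two \emph{largest} moduli, and it is precisely why the reduction to a single prime cannot be skipped: for composite moduli the same pairing would waste a factor $L/(q_1q_2)^{1/2}\geq1$. Accordingly, the care in a full write-up would lie in (i) justifying the empty-sum dichotomy ``$v_1>v_2\Rightarrow$ no admissible tuples'' and (ii) the prime-by-prime bookkeeping, namely checking that the pulled-out scalars telescope to $\bigl(\prod_i q_i\bigr)^{1/2}/L$ and that the nested $\ell^2$-norms reassemble to $B_i^{1/2}$.
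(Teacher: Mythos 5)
The paper offers no proof of this lemma: it is imported verbatim as Theorem~1 of Montgomery and Vaughan \cite{MVbas}, so there is no internal argument to compare yours against. Your proposal is a correct, self-contained derivation, and in substance it reconstructs the Montgomery--Vaughan mechanism. Both pillars check out: (i) the reduction to prime-power moduli via the primary decomposition of $\mathbb{Q}/\mathbb{Z}$ and the CRT bijection on coprime residues is sound --- the congruence constraint does split prime by prime, the pulled-out scalars telescope to $\bigl(\prod_i q_i\bigr)^{1/2}/[q_1,\dots,q_k]$, and the nested slice norms recombine to your $B_i^{1/2}$ by Fubini; and (ii) the prime-power trichotomy is right: for $v_1>v_2\geq 0$ the sum is indeed empty (the term $a_1/p^{v_1}$ contributes an uncancellable exact denominator $p^{v_1}$), while for $v_1=v_2=v$ the character detection modulo $p^v$, the orthogonality identity $\sum_{t\bmod p^v}|H_i(t)|^2=p^vB_i$ for $i=1,2$ (which uses $q_1=q_2=p^v$), and the trivial bound $\max_t|H_i(t)|\leq\phi(q_i)^{1/2}B_i^{1/2}\leq(q_iB_i)^{1/2}$ for $i\geq 3$ reproduce the constant $1/[q_1,\dots,q_k]$ exactly, with no loss. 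Your closing observation also correctly isolates why the reduction cannot be skipped: for composite moduli the same pairing of the two largest moduli would cost a factor $[q_1,\dots,q_k]/(q_1q_2)^{1/2}$. The only point a full write-up should make explicit is the induction in the peel-off: after one prime is handled, the new functions are the nonnegative slice norms, so the bound can be re-applied to the outer sum (no absolute values needed) and iterated over the primes dividing $[q_1,\dots,q_k]$.
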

In particular we deduce the bound for $A_{\mathcal{D}}(q_1,\dots,q_k)$ that we used in the proofs of Lemma~\ref{lemma-HL} and~\ref{Eq HL approx S-lemma}:
\begin{align}\label{eq bound A}
	\lvert A_{\mathcal{D}}(q_1,\dots,q_k) \rvert \leq \frac{1}{[q_1,\dots,q_k]}\prod_{i =1}^{k} \big( q_i \sum_{\substack{ 1\leq a_i \leq q_i \\  (q_i,a_i) =1  }} \lvert C(q_i,a_i)\rvert^2 \big)^{\frac12}.
\end{align} 
We also have a bound for $V_{k}(y,N,h)$.
\begin{cor}\label{Lem bound V}
	For any $h, y, N >0$,
	one has $V_{k}(y,N,h) \ll_{k,\varepsilon} h^{\frac{k}{2}}y^{\varepsilon}$.
\end{cor}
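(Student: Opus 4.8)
The plan is to collapse the sum over $d_1,\dots,d_k$ in the definition~\eqref{def V} of $V_k(y,N,h)$ into a weighted character sum over the moduli $q_1,\dots,q_k$, and then estimate it with the Montgomery--Vaughan inequality of Lemma~\ref{Lem MV basic ineq}. First I would exchange the order of summation: since $A_{(d_1,\dots,d_k)}(q_1,\dots,q_k)$ is a sum over residues $a_i$ coprime to $q_i$ with $\sum_i a_i/q_i\in\Z$ of $e\!\left(\sum_i a_i d_i/q_i\right)\prod_i C(q_i,a_i)$, summing each $d_i$ from $1$ to $h$ turns the $d_i$-sum into $E_h(a_i/q_i)$, with $E_h$ as in~\eqref{def E F}. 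This yields the exact identity
\[
\sum_{1\le d_1,\dots,d_k\le h} A_{(d_1,\dots,d_k)}(q_1,\dots,q_k) = \sum_{\substack{a_1,\dots,a_k\\ 1\le a_i\le q_i,\ (q_i,a_i)=1\\ \sum_i a_i/q_i\in\Z}} \prod_{i=1}^k C(q_i,a_i)\,E_h(a_i/q_i),
\]
which, for each fixed tuple $(q_1,\dots,q_k)$, has exactly the shape to which Lemma~\ref{Lem MV basic ineq} applies, the relevant $1$-periodic function being (for fixed $q_i$) $\alpha\mapsto C(q_i,\cdot)E_h(\alpha)$.

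Next I would invoke Lemma~\ref{Lem MV basic ineq} to bound the right-hand side by $[q_1,\dots,q_k]^{-1}\prod_i\bigl(q_i\sum_{a_i}\lvert C(q_i,a_i)\rvert^2\lvert E_h(a_i/q_i)\rvert^2\bigr)^{1/2}$, and then control each inner $\ell^2$-sum as crudely as possible: from the definition $\lvert C(q_i,a_i)\rvert\le 2$, from~\eqref{def E F} $\lvert E_h(\alpha)\rvert\le F_h(\alpha)$, and since $q_i>1$ the condition $(q_i,a_i)=1$ excludes $a_i=q_i$, so the bound~\eqref{bound sum F squared} gives $\sum_{a=1}^{q_i-1}F_h(a/q_i)^2\ll q_i\min(q_i,h)\le q_i h$. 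Consequently $\sum_{1\le d_1,\dots,d_k\le h}A_{(d_1,\dots,d_k)}(q_1,\dots,q_k)\ll_k h^{k/2}\,q_1\cdots q_k/[q_1,\dots,q_k]$.

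Finally I would feed this back into~\eqref{def V}: the factor $q_1\cdots q_k$ cancels the $\prod_i q_i^{-1}$, and $\lvert\lambda_2(q_i)\rvert\le 1$, leaving $\lvert V_k(y,N,h)\rvert\ll_k h^{k/2}\sum_{q_i\mid P_y^N,\ q_i>1}[q_1,\dots,q_k]^{-1}$; the remaining sum over $q_1,\dots,q_k$ is $\ll_{k,\varepsilon}y^\varepsilon$ by the estimate already worked out in~\eqref{eq bound on sum with lcm}. I do not expect a genuine obstacle: the only points that need a little care are checking that Lemma~\ref{Lem MV basic ineq} is applied with a bona fide $1$-periodic function and that the weights $C(q_i,a_i)$ are harmless (they are, being bounded by $2$), and noticing that the trivial bound $\min(q_i,h)\le h$ already suffices — no finer use of the size or smoothness of the $q_i$ is needed beyond the convergence furnished by~\eqref{eq bound on sum with lcm}.
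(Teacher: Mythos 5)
Your proposal is correct and follows essentially the same route as the paper: collapse the $d_i$-sums into $E_h(a_i/q_i)$, apply Lemma~\ref{Lem MV basic ineq}, bound the resulting $\ell^2$-sums via $|C|\le 2$, $|E_h|\le F_h$ and~\eqref{bound sum F squared}, and finish with~\eqref{eq bound on sum with lcm}. The only (immaterial) difference is that you apply the Montgomery--Vaughan inequality directly to $G_i=C(q_i,\cdot)E_h$ whereas the paper first takes absolute values and applies it to $F_h$.
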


\begin{proof}
	Recall that we defined
	\begin{align*}
		V_k(y,N,h) = \sum_{\substack{q_1, \dots, q_k \mid P_y^{N} \\ q_i >1}}	\prod_{i =1}^{k}\frac{\lambda_2(q_i)}{q_i}  \sum_{\substack{a_1, \dots, a_k \\ 1\leq a_i \leq q_i,  (q_i,a_i) =1 \\ \sum_{i =1}^{k} \frac{a_i}{q_i} \in \Z }}  \prod_{i =1}^k E_h\Big(\frac{a_i d_i}{q_i}\Big)C(q_i,a_i),
	\end{align*}
	where $C(q,a)=1$ for odd $q$ and $\lvert C(q,a)\rvert \leq 2$ in general.
	We use~\eqref{def E F} to write
	\begin{align*}
		\lvert V_k(y,N,h) \rvert \leq \sum_{\substack{q_1, \dots, q_k \mid P_y^{N} \\ q_i >1}}	\prod_{i =1}^{k}\frac{2}{q_i} \sum_{\substack{a_1, \dots, a_k \\ 1\leq a_i \leq q_i,  (q_i,a_i) =1 \\ \sum_{i =1}^{k} \frac{a_i}{q_i} \in \Z }}  \prod_{i =1}^k F_h(\tfrac{a_i}{q_i}).
	\end{align*}  
	Then Lemma~\ref{Lem MV basic ineq},~\eqref{bound sum F squared} and the bound in~\eqref{eq bound on sum with lcm} yield
	\begin{align*}
		\lvert V_k(y,N,h) \rvert &\leq \sum_{\substack{q_1, \dots, q_k \mid P_y^{N} \\ q_i >1}}	\frac{2^k}{[q_1,\dots,q_k]}\prod_{i =1}^{k}\frac{1}{q_i^{\frac12}} \Big(  \sum_{\substack{ 1\leq a_i \leq q_i \\ (q_i,a_i) =1 }}   F_h(\tfrac{a_i}{q_i})^2 \Big)^{\frac12} \\
        &\leq \sum_{\substack{q_1, \dots, q_k \mid P_y^{N} \\ q_i >1}}	\frac{2^k}{[q_1,\dots,q_k]}h^{\frac{k}{2}}
		\ll_{k,\varepsilon} h^{\frac{k}{2}}y^{\varepsilon} ,
	\end{align*} 
which is the bound announced.
\end{proof}

\subsection{The main estimate}
We now prove the analogue of \cite[(60)]{MS}, writing $\sum_{\substack{ \mathcal{D} \subseteq[1,h] \\ \lvert\mathcal{D}\rvert = k}} \mathfrak{S}_0(\mathcal{D}) $ in terms of~$V_k(y,N, h)$. 
Again, the idea of the proof is very similar to the work of Montgomery and Soundararajan, except that we deal with a wider summation (namely a sum over all integers instead of a sum over square-free integers).

\begin{lem}\label{Lem analogue of MS60}
	For any $h>k \in \N$,
	let $y= h^{k+1}$ and $N\geq 4\log y$.
	One has
	\begin{align*}
		\sum_{\substack{ \mathcal{D} \subseteq[1,h] \\ \lvert\mathcal{D}\rvert = k}} \mathfrak{S}_0(\mathcal{D}) 
		&= \sum_{j=0}^{k/2}  \binom{k}{2j} \frac{(2j)!}{j! 2^j}\left(- h \sum_{1< d \mid P_y^{N} } \frac{C(d) \phi(d)}{d^2} \right)^j  V_{k-2j}(y,N,h)    + O_{k,\varepsilon}(h^{\frac{k-1}{2}}y^{\varepsilon}),
	\end{align*}
	where $	V_k(y,N,h)$ is defined in~\eqref{def V}, and 	$$C(d) = \begin{cases}
		1 \text{ if } d \text{ is odd} \\
		0 \text{ if } 2 \mid q, 4 \nmid d \\
		4  \text{ if } 4 \mid d.
	\end{cases}$$
\end{lem}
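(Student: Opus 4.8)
The goal is to express $\sum_{\mathcal{D}\subseteq[1,h],\,|\mathcal{D}|=k}\mathfrak{S}_0(\mathcal{D})$ in terms of the ``easier'' quantities $V_j(y,N,h)$ where the defining indices $d_1,\dots,d_k$ are allowed to coincide. Starting from~\eqref{constant-with-ET}, the right-hand side involves $\sum_{\mathcal{D}\subseteq[1,h],\,|\mathcal{D}|=k} A_{\mathcal{D}}(q_1,\dots,q_k)$, a sum over $k$-element subsets of $[1,h]$. Expanding a sum over subsets of size $k$ as $\frac1{k!}$ times the sum over ordered $k$-tuples of \emph{distinct} integers, and then using inclusion–exclusion on the partition of $\{1,\dots,k\}$ recording which coordinates are forced equal, one can rewrite $k!\sum_{\mathcal{D}} A_{\mathcal{D}}(q_1,\dots,q_k)$ as a signed combination of the ``full'' tuple sums $\sum_{1\le d_1,\dots,d_k\le h}A_{(d_1,\dots,d_k)}(\cdot)$ with some coordinates identified. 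So the first step is to set up this combinatorial identity carefully, following the corresponding passage in~\cite{MS}.

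The key simplification, exactly as in~\cite{MS}, is to show that only the partitions into singletons and \emph{pairs} contribute to the main term, all coarser identifications being absorbed into the error $O_{k,\varepsilon}(h^{(k-1)/2}y^{\varepsilon})$. Concretely, when two indices $d_i=d_j$ are identified, the corresponding pair of variables $(q_i,q_j)$ in $A$ merges: using the definition of $A_{\mathcal{D}}$ and the Chinese Remainder Theorem for coprime moduli, the contribution of an identified pair with the remaining structure factors as
\[
-h\sum_{1<d\mid P_y^N}\frac{C(d)\phi(d)}{d^2}
\]
times an $A$-sum in the remaining $k-2$ variables — this is where the arithmetic factor $C(d)\phi(d)/d^2$ comes from (the $\phi(d)$ from summing $e(a d_i/q_i)e(a'd_i/q_i)$ over the reduced residues with $a+a'\equiv0$, the $C(d)$ from the factor $C(q,a)$ at $q=d$, and the sign from $\lambda_2$). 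The combinatorial constant $\binom{k}{2j}\frac{(2j)!}{j!2^j}$ is simply the number of ways to choose $2j$ of the $k$ coordinates and pair them up. Identifications producing a part of size $\ge 3$, or $\ge 2$ disjoint pairs sharing extra constraints, must be bounded by $O_{k,\varepsilon}(h^{(k-1)/2}y^{\varepsilon})$; for this one invokes Lemma~\ref{Lem MV basic ineq}, the bound~\eqref{eq bound A} on $A_{\mathcal{D}}$, the mean-square estimate~\eqref{bound sum F squared} for $F_h$, and the summability bound~\eqref{eq bound on sum with lcm}, exactly as in the proof of Corollary~\ref{Lem bound V} but with one fewer ``free'' variable, which costs a factor $h^{1/2}$ and yields the saving.

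The main obstacle I expect is bookkeeping: tracking the combinatorial factors through the partition/inclusion–exclusion expansion and verifying that every contribution \emph{not} of the singleton-plus-pairs type really is $O(h^{(k-1)/2}y^{\varepsilon})$ and not merely $O(h^{k/2-\text{something smaller}})$. The delicate point is that when three indices coincide one gets a triple character sum $\sum_{a+a'+a''\equiv 0}$ which is genuinely smaller than the product of two pair sums, so one needs the full strength of Lemma~\ref{Lem MV basic ineq} (rather than a crude triangle inequality) to extract the extra $h^{-1/2}$; similarly one must check that the ``error'' in replacing the distinct-tuple sum by the full tuple sum in each block is of the claimed size. Once the identity is established in the form above, the lemma follows by substituting back into~\eqref{constant-with-ET} and noting the $o_k(1)$ there is dominated by the stated error term (with the choice $y=h^{k+1}$, $N\ge 4\log y$ making all the $y^\varepsilon$-type losses harmless after renaming $\varepsilon$).
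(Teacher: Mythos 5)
Your plan follows the paper's proof essentially step for step: the partition expansion of the subset sum (the analogue of (52) of \cite{MS}), the Montgomery--Vaughan inequality of Lemma~\ref{Lem MV basic ineq} combined with~\eqref{eq bound A},~\eqref{bound sum F squared} and~\eqref{eq bound on sum with lcm} to discard partitions having a part of size $\geq 3$, and the singleton-plus-pair partitions producing the main term with the factor $-h\sum_{1<d\mid P_y^{N}}C(d)\phi(d)/d^2$ and the pairing coefficient $\binom{k}{2j}(2j)!/(j!2^j)$. The one point you gloss over is that a merged pair does \emph{not} literally factor off from the remaining variables: the global condition $\sum_i a_i/q_i\in\Z$ couples the pair to the rest through a function $H(b/r)$ with $r\geq1$, only the $r=1$ terms give the stated main term, and the $r>1$ terms need a separate estimate of the form $H(b/r)\ll F_h(b/r)\,d(r)(\log y)/r$ before being absorbed into $O_{k,\varepsilon}(h^{(k-1)/2}y^{\varepsilon})$ --- a verification of exactly the kind your plan anticipates.
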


\begin{proof} Following the arguments of \cite{MS}, we can prove the analogue of \cite[(52)]{MS} in our context, which is
	\begin{align} \label{analogue-52}
		\sum_{\substack{ \mathcal{D} \subseteq[1,h] \\ \lvert\mathcal{D}\rvert = k}}A_{\mathcal{D}}(q_1,\dots, q_k) = \sum_{\mathcal{P}= \lbrace \mathcal{S}_1, \dots, \mathcal{S}_M\rbrace } w(\mathcal{P})\sum_{\substack{a_1, \dots, a_k \\ 1\leq a_i \leq q_i,  (q_i,a_i) =1 \\ \sum_{i =1}^{k} \frac{a_i}{q_i} \in \Z }} \prod_{i =1}^k C(q_i,a_i)  \prod_{m=1}^M \sum_{d_m =1}^{h}  e\left(\sum_{i\in \mathcal{S}_m } \frac{a_i}{q_i} d_m \right), 
	\end{align}
	where the first sum is over partitions $\mathcal{P}= \lbrace \mathcal{S}_1, \dots, \mathcal{S}_M\rbrace$ of $\lbrace 1, \dots , k\rbrace$, and $w(\mathcal{P})$ is defined in~\cite[p.~17]{MS}.
	
	In the case of a partition $\mathcal{P}$ containing at least one part of size $\geq3$, 
	write $\mathcal{N}_1 = \bigcup_{\lvert \mathcal{S}_m\rvert =1} \mathcal{S}_m$, $\mathcal{N}_2=\lbrace 1, \dots , k\rbrace\smallsetminus \mathcal{N}_1$    and $m_2= \lvert \lbrace 1\leq m\leq M : \lvert \mathcal{S}_m\rvert \geq 2  \rbrace \rvert$.
	Using~\eqref{def E F} and $\lvert C(q,x)\rvert \leq 2$, we have
	\begin{align*}
		\Big\lvert \sum_{\substack{a_1, \dots, a_k \\ 1\leq a_i \leq q_i,  (q_i,a_i) =1 \\ \sum_{i =1}^{k} \frac{a_i}{q_i} \in \Z }} \prod_{i =1}^k C(q_i,a_i)  \prod_{m=1}^M \sum_{d_m =1}^{h}  e\left(\sum_{i\in \mathcal{S}_m } \frac{a_i}{q_i} d_m \right) \Big\rvert \leq 2^k h^{m_2}  \sum_{\substack{a_1, \dots, a_k \\ 1\leq a_i \leq q_i,  (q_i,a_i) =1 \\ \sum_{i =1}^{k} \frac{a_i}{q_i} \in \Z }} \prod_{i \in \mathcal{N}_1} F_h(\tfrac{a_i}{q_i}).
	\end{align*}
	Then we apply Lemma~\ref{Lem MV basic ineq} and the bound~\eqref{bound sum F squared} to obtain that the sum above is
	\begin{align*}
		&\leq \frac{2^k h^{m_2}}{[q_1,\dots,q_k]} \prod_{i \in \mathcal{N}_1}\Big( q_i \sum_{\substack{ 1\leq a_i \leq q_i \\  (q_i,a_i) =1  }}  \big(F_h(\tfrac{a_i}{q_i})\big)^2 \Big)^{\frac12}  \prod_{i \in \mathcal{N}_2}\Big( q_i \sum_{\substack{ 1\leq a_i \leq q_i \\  (q_i,a_i) =1  }} 1^2 \Big)^{\frac12} \\
		&\leq  2^k h^{\frac{k-1}2}\frac{ q_1\cdot\dots\cdot q_k}{[q_1,\dots,q_k]},
	\end{align*}
	where we used $\tfrac{1}{2}\lvert \mathcal{N}_1\rvert + m_2 \leq \tfrac{k-1}{2}$ when 
	 the partition $\mathcal{P}$ contains at least one part of size $\geq3$. 
Replacing this bound in~\eqref{analogue-52} and then in \eqref{constant-with-ET}, we sum over $1< q_1,\dots, q_k \mid P_y^N$ as in~\eqref{constant-with-ET} and  use the bound~\eqref{eq bound on sum with lcm} to obtain that the contribution of the partitions containing at least one part of size $\geq 3$ 
 in $\sum_{\substack{ \mathcal{D} \subseteq[1,h] \\ \lvert\mathcal{D}\rvert = k}}\mathfrak{S}_0(\mathcal{D})$  is at most $O_{k,\varepsilon}(h^{\frac{k-1}2 + \varepsilon})$.
 
We now turn our attention to partitions of $\lbrace 1, \dots , k\rbrace$ with sets of size at most $2$. 
	The combinatorics leading to \cite[(56)]{MS} work similarly and give	
	\begin{equation} \label{def-main-sum}
\begin{split}
		\sum_{\substack{ \mathcal{D} \subseteq[1,h] \\ \lvert\mathcal{D}\rvert = k}}\mathfrak{S}_0(\mathcal{D}) 
		=& \sum_{0 \leq j \leq \frac{k}{2}} (-1)^{j}\binom{k}{2j}\frac{(2j)!}{j! 2^j}
		\sum_{r_1, \dots, r_j \mid P_y^{N}}
		\sum_{\substack{b_1, \dots, b_j \\ 1\leq b_i \leq r_i,  (r_i,b_i) =1  }} \prod_{i =1}^j H\big(\tfrac{b_i}{r_i}\big) \\
		&\times	\sum_{\substack{q_{2j+1}, \dots, q_k \mid P_y^{N} \\ q_i >1}}
		\sum_{\substack{a_{2j+1}, \dots, a_k \\ 1\leq a_i \leq q_i,  (q_i,a_i) =1 \\ \sum_{i =1}^{j} \frac{b_i}{r_i}+ \sum_{i =2j+1}^{k} \frac{a_i}{q_i} \in \Z }} \prod_{i =2j+1}^k \frac{\lambda_2(q_i)C(q_i,a_i)}{q_i}  \sum_{d_i =1}^{h}  e\left( \frac{a_i}{q_i}  d_i \right)
		\\	&+ O_k(h^{\frac{k-1}2 + \varepsilon})
	\end{split} \end{equation}
	where $$H\big(\tfrac{b}{r}\big) = \sum_{\substack{q_1, q_2 \mid P_y^{N} \\ q_i >1}}
	\sum_{\substack{a_1, a_2 \\ 1\leq a_i \leq q_i,  (q_i,a_i) =1 \\  \frac{a_1}{q_1} + \frac{a_2}{q_2} \in \frac{b}{r} +\Z }}  \frac{\lambda_2(q_1)C(q_1,a_1)\lambda_2(q_2)C(q_2,a_2)}{q_1 q_2}    \sum_{d =1}^{h}  e\left(\frac{b}{r} d \right).$$
	In particular, we have
	\begin{align*}
		H(1) = \sum_{1<q\mid P_y^{N}}
		\sum_{\substack{ 1\leq a \leq q \\ (q,a) =1  }}  \frac{C(q,a)C(q,q-a)}{q^2}    h  
		= h  \sum_{1<q\mid P_y^{N}}
		\frac{C(q)\phi(q)}{q^2} ,
	\end{align*}
	and the contribution of the terms with all $r_i = 1$ in the sum above is 
	\begin{align*}
		\sum_{0 \leq j \leq \frac{k}{2}} \binom{k}{2j}\frac{(2j)!}{j! 2^j}
		\Big(- h  \sum_{1<q\mid P_y^{N}}	\frac{C(q)\phi(q)}{q^2}  \Big)^j  V_{k-2j}(y,N,h).
	\end{align*}
	We now show that the contribution to \eqref{def-main-sum} of the terms
	where not all $r_i$ are 1 can be absorbed in the error term.
	Let $\ell$ be the number of $i$'s for which $r_i>1$.  
	For any~$\ell >0$, and any $r_1, \dots , r_{\ell}, q_{2j+1}, \dots,  q_k>1$ up to re-ordering and applying Lemma~\ref{Lem MV basic ineq}, we have
	\begin{equation}  \label{bound HF with basic ineq} \begin{split}
			\sum_{\substack{b_1, \dots, b_{\ell} \\ 1\leq b_i \leq r_i,  (r_i,b_i) =1  }} \prod_{i =1}^{\ell} H\big(\frac{b_i}{r_i}\big) 
		\sum_{\substack{a_{2j+1}, \dots, a_k \\ 1\leq a_i \leq q_i,  (q_i,a_i) =1 \\ \sum_{i =1}^{j} \frac{b_i}{r_i}+ \sum_{i =2j+1}^{k} \frac{a_i}{q_i} \in \Z }} \prod_{i =2j+1}^k \frac{\lambda_2(q_i)C(q_i,a_i)}{q_i}  \sum_{d_i =1}^{h}  e\left( \frac{a_i}{q_i}  d_i \right)  \\
		\ll_k 
		\frac{1}{[r_1, \dots, r_{\ell},q_{2j+1}, \dots, q_k]}\prod_{i=1}^{\ell} \Big( r_i \sum_{ 1\leq b\leq r_i,  (r_i,b) =1  } \left\lvert H\big(\frac{b}{r_i}\big) \right\rvert^2 \Big)^{\frac12}  \\
		 \times\prod_{i=2j+1}^{k} \Big( \frac{1}{q_i} \sum_{ 1\leq a\leq q_i,  (q_i,a) =1  } \left\lvert F_h\big(\frac{a}{q_i}\big) \right\rvert^2 \Big)^{\frac12}.  
	\end{split} \end{equation}
	To obtain a bound for $\left\lvert H\big(\frac{b}{r}\big) \right\rvert$ we proceed similarly to \cite{MS} which gives
	\begin{align*}
		H\big(\tfrac{b}{r}\big) &\ll F_h\big(\tfrac{b}{r}\big) \sum_{\substack{s_1, s_2 \mid P_y^{N} \\ [s_1,s_2] = r}}
		\sum_{\substack{c_1, c_2 \\ 1\leq c_i \leq s_i,  (s_i,c_i) =1 \\  \frac{c_1}{s_1} + \frac{c_2}{s_2} \in \frac{b}{r} +\Z }} \frac{1}{s_1 s_2} \sum_{\substack{t \mid P_y^{N} \\ (t,r)=1}}  \frac{\phi(t)}{t^2} \\
		&\ll  F_h\big(\tfrac{b}{r}\big) \frac{1}{\phi(r)}\sum_{\substack{s_1, s_2 \mid P_y^{N} \\ [s_1,s_2] = r}}
		\frac{\phi(s_1)\phi(s_2)}{s_1 s_2} \prod_{\substack{p\leq y \\ p\nmid r}} \Big(1 + \frac{1}{1-p^{-1}} \sum_{n = 1}^{N} p^{-n}\Big) \\
		& \ll F_h\big(\tfrac{b}{r}\big) \frac{1}{r} \prod_{p\mid r}(1-p^{-1})^2(1+ v_p(r) - v_p(r)p^{-1})   \prod_{\substack{p\leq y \\ p\nmid r}} \Big(1 + p^{-1}\Big) \\
		& \ll F_h\big(\tfrac{b}{r}\big) \frac{d(r)}{r} \log y,
	\end{align*}
	where $d(r)$ is the number of divisors of $r$.
	Using ~\eqref{bound sum F squared}, this gives
	$$\sum_{\substack{ 1\leq b \leq r \\  (r,b) =1  }}  \big\lvert H(\tfrac{b}{r})\big\rvert^2 \leq \min(r,h) \frac{d(r)^2}{r} (\log y)^2.$$
	Using this bound and~\eqref{bound sum F squared} in~\eqref{bound HF with basic ineq} summed over all $r_1, \dots , r_{\ell}, q_{2j+1}, \dots,  q_k>1$ divisors of $P_y^{N}$, we obtain
	\begin{align*}
		&\sum_{\substack{r_1, \dots, r_{\ell} \mid P_y^{N} \\ r_i >1}}
		\sum_{\substack{b_1, \dots, b_{\ell} \\ 1\leq b_i \leq r_i,  (r_i,b_i) =1  }} \prod_{i =1}^{\ell} H\big(\tfrac{b_i}{r_i}\big) 
		\sum_{\substack{q_{2j+1}, \dots, q_k \mid P_y^{N} \\ q_i >1}}
		\sum_{\substack{a_{2j+1}, \dots, a_k \\ 1\leq a_i \leq q_i,  (q_i,a_i) =1 \\ \sum_{i =1}^{j} \frac{y_i}{r_i}+ \sum_{i =2j+1}^{k} \frac{a_i}{q_i} \in \Z }} \prod_{i =2j+1}^k \frac{\lambda_2(q_i)C(q_i,a_i)}{q_i}  \sum_{d_i =1}^{h}  e\left( \frac{a_i}{q_i}  d_i \right)  \\
		\ll_k& \sum_{\substack{r_1, \dots, r_{\ell} \mid P_y^{N} \\ r_i >1}}\sum_{\substack{q_{2j+1}, \dots, q_k \mid P_y^{N} \\ q_i >1}}
		\frac{1}{[r_1, \dots, r_{\ell},q_{2j+1}, \dots, q_k]}\prod_{i=1}^{\ell} \Big(h^{\frac12}    d(r_i) \log y \Big)    h^{\frac{k-2j}{2}}  \\
		\ll_k& h^{\frac{k+ \ell -2j}{2}} (\log y)^{\ell} \sum_{m \mid P_y^{N}}\frac{1}{m} \Big(\sum_{r\mid m}d(r)\Big)^{\ell} \Big(\sum_{q\mid m}1\Big)^{k-2j} \\
		\ll_k& h^{\frac{k+ \ell -2j}{2}} (\log y)^{\ell} \prod_{p\mid P_y}\Big( 1 + \sum_{n=1}^{N}(n+1)^{k+\ell -2j}\big(\frac{n+2}{2}\big)^{\ell} p^{-n} \Big) 
		\ll_{k,\ell,j,\varepsilon} h^{\frac{k+ \ell -2j}{2}} y^{\varepsilon}.
	\end{align*}
	Finally, summing the contribution for each $\ell\geq 0$ yields
	\begin{align*}
		\sum_{\substack{ \mathcal{D} \subseteq[1,h] \\ \lvert\mathcal{D}\rvert = k}} \mathfrak{S}_0(\mathcal{D}) 
		&= \sum_{j=0}^{k/2}  \binom{k}{2j} \frac{(2j)!}{j! 2^j}\left(- h \sum_{1< d \mid P_y^{N} } \frac{C(d) \phi(d)}{d^2} \right)^j  V_{k-2j}(y,N,h)  \\
		&+	\sum_{j=0}^{k/2}  \binom{k}{2j} \frac{(2j)!}{j! 2^j}\sum_{\ell = 1}^j\binom{j}{\ell}\left( h \sum_{1< d \mid P_y^{N} } \frac{C(d) \phi(d)}{d^2} \right)^{j-\ell}  O\big(h^{\frac{k+ \ell -2j}{2}} y^{\varepsilon}\big)  + O_k(h^{(k-1 +\varepsilon)/2}),
	\end{align*}
	and using
$
		\sum_{1< d \mid P_y^{N} } \frac{C(d) \phi(d)}{d^2} \ll_{\varepsilon} y^{\varepsilon},
$
 we deduce
	\begin{align*}
		\sum_{\substack{ \mathcal{D} \subseteq[1,h] \\ \lvert\mathcal{D}\rvert = k}} \mathfrak{S}_0(\mathcal{D}) 
		&= \sum_{j=0}^{k/2}  \binom{k}{2j} \frac{(2j)!}{j! 2^j}\left(- h \sum_{1< d \mid P_y^{N} } \frac{C(d) \phi(d)}{d^2} \right)^j  V_{k-2j}(y,N,h)    + O_{k,\varepsilon}(h^{\frac{k-1}{2}}y^{\varepsilon}).
	\end{align*}
	which completes the proof of Lemma~\ref{Lem analogue of MS60}.
\end{proof}

The proof of Theorem~\ref{Prop bound for D with large sets} is now relatively straightforward.
Lemma~\ref{Lem analogue of MS60} gives
	for any $h>k \in {\N}$, and $N\geq 4(k+1)\log h$
that
\begin{align*}
	\sum_{\substack{ \mathcal{D} \subseteq[1,h] \\ \lvert\mathcal{D}\rvert = k}} \mathfrak{S}_0(\mathcal{D}) 
	&= \sum_{j=0}^{k/2}  \binom{k}{2j} \frac{(2j)!}{j! 2^j}\left(- h \sum_{1< d \mid P_y^{N} } \frac{C(d) \phi(d)}{d^2} \right)^j  V_{k-2j}(h^{k+1},N,h)    + O_{k,\varepsilon}(h^{\frac{k-1}{2}+\varepsilon}).
\end{align*}
Then the bound from Corollary~\ref{Lem bound V} yields
\begin{align*}
	\sum_{\substack{ \mathcal{D} \subseteq[1,h] \\ \lvert\mathcal{D}\rvert = k}} \mathfrak{S}_0(\mathcal{D}) 
	&\ll_{k,\varepsilon} \sum_{j=0}^{k/2}  \binom{k}{2j} \frac{(2j)!}{j! 2^j}\left( h \sum_{1< d \mid P_y^{N} } \frac{C(d) \phi(d)}{d^2} \right)^j  h^{\frac{k-2j}{2} + \varepsilon}    + O_{k,\varepsilon}(h^{\frac{k-1}{2}+\varepsilon}) \\
	&\ll_k h^{\frac{k}{2} + \varepsilon} \Big( \prod_{p<y} 1-\frac{1}{p} \Big)^{\frac{-k}{2}}
	\ll_{k,\varepsilon} h^{\frac{k}{2} + 2\varepsilon},
\end{align*}
which finishes the proof of Theorem ~\ref{Prop bound for D with large sets}.

\section{Integral form and improved error terms}
\label{section-improved-averages}

Using the methods of  Section~\ref{proof-prop-section} and Theorem~\ref{Prop bound for D with large sets}, we can obtain a more precise form of the averages of the Hardy--Littlewood constants for \sos of \cite[Theorem~1.1]{smilansky} 
and \cite[Proposition~1.3]{FKR} (in a special case) by exhibiting a secondary term. In order to see the secondary term, we need to express the results of Section~\ref{proof-prop-section} differently, as a closed-form expression which contains implicitly all the descending powers of $\log{h}$.  
We first prove that we can write such an asymptotic for the number of sums of two squares, with a square-root cancellation error term under the Riemann Hypothesis (Theorem~\ref{thm-S0S-integral}). The argument for the proof of Theorem~\ref{thm-S0S-integral} is essentially due to Selberg and known to experts, it appeared as a mathoverflow post \cite{lucia}, and an exercise in the book of Koukoulopoulos \cite[Exercise~13.7]{koukoulopoulos}. Note also the independent analogue result of Gorodetsky and Rodgers \cite[Theorem~B.1]{GR20} inspired by \cite{Ram76}.
With the same techniques, we then  prove Proposition~\ref{prop-lucia}, which exhibits the secondary term for the average of the Hardy-Littlewood constants for 2-tuples of sums of two squares. The general case is Proposition~\ref{intro-FKR-improved} and it follows by using Theorem~\ref{Prop bound for D with large sets} to show that the average over $k$-tuples reduces to the average over 2-tuples.

\begin{proof}[Proof of Theorem~\ref{thm-S0S-integral}:]
We first assume the Riemann Hypothesis. Using Perron's formula, we have for any $\delta>0$
\begin{align}
\label{after-perron} 
\sum_{n \leq x} 1_\mathbf{E}(n) = \int_{1+\delta-iT}^{1+\delta+iT} F(s) \frac{x^s}{s}ds + {O}
\bigg(\frac{x^{1+\delta} \log{x}}{T }\bigg),
\end{align}
where  $F(s) = \zeta^{1/2}(s) L(s, \chi_4)^{1/2} (1 - 2^{-s})^{-1/2} \prod_{p \equiv 3 \mod 4} \left( 1 - p^{-2s} \right)^{-1/2}$ as seen in the proof of Theorem~\ref{thm-SOS-AP}.
The above path integral is part of a contour which enclose a region of analyticity of the integrand, which is the usual 
countour  going from $1+\delta - iT$ to $1+\delta + iT$ then to $1/2+\varepsilon +iT$ then to $1/2+\varepsilon -iT$ and then back to $1+\delta - iT$ with a slit along the real axis between $1/2+\varepsilon$ and 1, with a line just above the real axis from
	$1/2+\varepsilon$ to $1$, 
	and a line just below the real axis from 1 to $1/2 + \varepsilon$. More precisely, for any $\varepsilon, \eta > 0$ and for $0 < \kappa < \delta$, 
we define line segments $L_j, \, j=1,2,...,7$ as in Figure~\ref{fig:cont2}.

Together with the line segment $1+\delta +iT \to 1+\delta -iT$ of the integral~\eqref{after-perron}, this gives the closed contour of Figure~\ref{fig:cont2}, which enclose a region of analyticity of the function $F(s) = G(s)(s-1)^{-1/2}$ since we are assuming the Riemann Hypothesis and $\zeta^{1/2}(s)(s-1)^{1/2}$, $L(s, \chi_4)^{1/2} (1-2^{-s})^{-1/2} \prod_{p \equiv 3 \mod 4} (1-p^{-2s})^{-1/2}$ are analytic for $\re(s) > 1/2 + \varepsilon.$ 
Then, using Cauchy's theorem, we have
$$\int_{1+\delta-iT}^{1+\delta+iT} F(s) \frac{x^s}{s}ds = \sum_{j=1}^7 \int_{L_j} F(s) \frac{x^s}{s}ds .$$
The contribution coming from $L_1,L_2,L_4,L_6,L_7$ are bounded by the classical estimates, where we use the Lindel\"of Hypothesis to  bound 
$|\zeta^{1/2}(\sigma+it)|, |L^{1/2}(\sigma+it, \chi_4)| \ll_\sigma |t|^{\varepsilon_1}$ for $1/2 < \sigma < 1$ and $\varepsilon_1 >0$.
For the horizontal integral over $L_1$, we have
$$\int_{L_1} F(s) \frac{x^s}{s}ds \ll \int_{1/2+\varepsilon}^{1+\delta}  \frac{x^{\sigma}}{T^{1-2\varepsilon_1}}  d \sigma = {O}\bigg(\frac{x^{1+\delta}}{T^{1-2\varepsilon_1}} \bigg),$$
where we also used the fact that the Euler product $(1-2^{-s})^{-1/2} \prod_{p \equiv 3 \mod 4} (1-p^{-2s})^{-1/2}$ is absolutely bounded for $\re(s)> 1/2+\varepsilon$. We get the same bound for $\int_{L_7}$.

    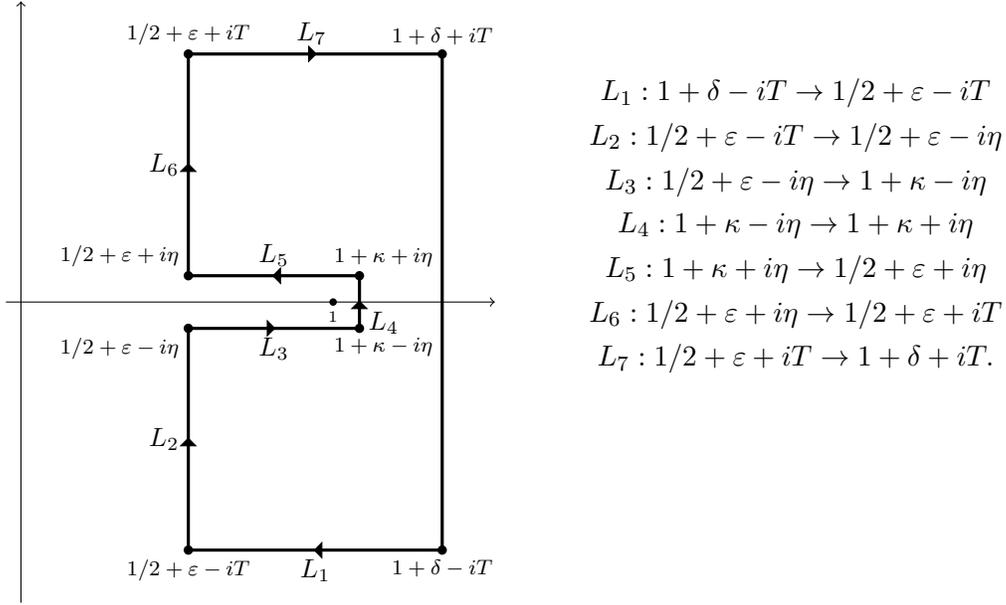
\begin{figure}
\centering
\hspace*{1cm}\begin{tikzpicture}

\draw [->](-0.5,0) -- (6.0,0);
\draw [<-] (-0.3,4.0) -- (-0.3,-4.0);
\begin{scope}[very thick, every node/.style={sloped,allow upside down}]
\draw  (5.3,3.3) -- node[scale=0.8] {} (5.3,-3.3)-- node[scale=0.8] {\midarrow} (1.925,-3.3) --node[scale=0.8] {\midarrow} (1.925,-0.35) --node[scale=0.8] {\midarrow} (4.2,-0.35) -- node[scale=0.8] {\midarrow}(4.2,0.35) --node[scale=0.8] {\midarrow} (1.925,0.35) --node[scale=0.8] {\midarrow} (1.925,3.3) --node[scale=0.8] {\midarrow} (5.3,3.3);
\end{scope}
\draw  (5.3,3.3) -- (5.3,-3.3) -- node[below] {$L_1$} (1.925,-3.3) --node[left] {$L_2$} (1.925,-0.35) --node[below] {$L_3$} (4.2,-0.35) -- node[below right] {$L_4$} (4.2,0.35) --node[above] {$L_5$} (1.925,0.35) --node[left] {$L_6$} (1.925,3.3) --node[above] {$L_7$} (5.2,3.3);

 \coordinate[label = above:\footnotesize$1+\delta +iT$] (A) at (5.3,3.3);
 \coordinate[label = below:\footnotesize$1+\delta -iT$] (B) at (5.3,-3.3);
 \coordinate[label = below:\footnotesize$1/2+\varepsilon -iT$] (C) at (1.925,-3.3);
 \coordinate[label = below left:\footnotesize$1/2 +\varepsilon -i \eta$] (D) at (1.925,-0.35);
 \coordinate[label = below:\footnotesize$\hspace{18pt} 1+\kappa -i\eta$] (E) at (4.2,-0.35);
 \coordinate[label = above:\footnotesize$\hspace{18pt}1+\kappa +i\eta$] (F) at (4.2,0.35);
 \coordinate[label = above left:\footnotesize$1/2 +\varepsilon +i \eta$] (G) at (1.925,0.35);
 \coordinate[label = above:\footnotesize$1/2 +\varepsilon +i T$] (H) at (1.925,3.3);
 \coordinate[label = below:\tiny$1$] (I) at (3.85,0);

 \node at (A)[circle,fill,inner sep=1.25pt]{};
  \node at (B)[circle,fill,inner sep=1.25pt]{};
   \node at (C)[circle,fill,inner sep=1.25pt]{};
   \node at (D)[circle,fill,inner sep=1.25pt]{};
   \node at (E)[circle,fill,inner sep=1.25pt]{};
\node at (F)[circle,fill,inner sep=1.25pt]{};
\node at (G)[circle,fill,inner sep=1.25pt]{};
\node at (H)[circle,fill,inner sep=1.25pt]{};
\node at (I)[circle,fill,inner sep=1pt]{};
\node at (10.0,1.0) {$\large\begin{minipage}{\textwidth}
            \begin{equation*}
            \begin{gathered}
             L_1 : 1+\delta -iT \to 1/2+\varepsilon -iT\\
    L_2 : 1/2+\varepsilon -iT \to 1/2+\varepsilon -i \eta\\
    L_3 : 1/2+\varepsilon -i \eta \to 1+\kappa -i \eta\\
    L_4 : 1+\kappa -i \eta \to 1+\kappa +i \eta\\
    L_5 : 1+\kappa +i \eta \to 1/2 + \varepsilon + i \eta\\
    L_6 : 1/2 + \varepsilon + i \eta \to 1/2 + \varepsilon + iT\\
    L_7 : 1/2 + \varepsilon + iT \to 1+\delta +iT .
    \end{gathered}
            \end{equation*}
       \end{minipage}$
  };
\end{tikzpicture}
 \caption{The contour used in the proof of Theorem 2.4.}
  \label{fig:cont2}
\end{figure}

For the vertical integral over $L_2$, 
we have
$$\int_{L_2} F(s)\frac{x^s}{s}ds \ll \int_{\eta}^T \frac{x^{1/2+\varepsilon}}{(t+ \frac12)^{1-2\varepsilon_1}} dt = {O}\bigg( x^{1/2+\varepsilon} {T^{2\varepsilon_1}} \bigg),$$
which also hold for $\int_{L_6}$.
Finally, we have 
$$\int_{L_4} F(s) \frac{x^s}{s}ds \ll  \eta \: x^{1+\kappa},$$
and choosing $T=x^{1/2}$ and $\eta< x^{-\frac12 - \kappa}$, this gives
$$\int_{1+\delta-iT}^{1+\delta+iT} F(s) \frac{x^s}{s}ds = \lim_{\eta \to 0^+} \bigg(\int_{1/2+\varepsilon-i \eta}^{1+\kappa-i \eta} - \int_{1/2+\varepsilon+i\eta}^{1+\kappa +i\eta} \bigg) F(s)\frac{x^s}{s}ds + {O}\bigg( x^{1/2+\varepsilon} \bigg).$$
Note that $\kappa$ can be arbitrarily small, and choosing for example $\kappa = x^{-2}$, we have
$$\lim_{\eta \to 0^+} \bigg(\int_{1/2+\varepsilon-i \eta}^{1+\kappa-i \eta} - \int_{1/2+\varepsilon+i\eta}^{1+\kappa +i\eta} \bigg) F(s) \frac{x^s}{s}ds = \lim_{\eta \to 0^+} \bigg(\int_{1/2+\varepsilon-i \eta}^{1-i \eta} - \int_{1/2+\varepsilon+i\eta}^{1+i\eta} \bigg) F(s) \frac{x^s}{s}ds +{O}(1).$$
Putting everything together, we have
$$\sum_{n \leq x} 1_\mathbf{E}(n) = \frac{1}{2 \pi i}\int_{1/2 + \varepsilon}^{1} G(\sigma)\frac{x^\sigma}{\sigma} \lim_{\eta \to 0^+}\Big( (\sigma - i \eta -1)^{-1/2}- (\sigma + i \eta -1)^{-1/2} \Big) d \sigma + {O}\bigg( x^{1/2+\varepsilon} \bigg),$$
where $G(s) = (s-1)^{1/2}F(s)$.

We use the fact that when $\sigma \in (0,1)$, $\log(\sigma \pm i \eta -1) \sim \log |\sigma -1| \pm i \pi$ as $\eta \to 0^+$. Writing $(\sigma \pm i \eta -1)^{-1/2} = \exp{(-\frac{1}{2}\log (\sigma \pm i \eta -1))}$, we see that $(\sigma \pm i \eta -1)^{-1/2} \sim \mp i |\sigma -1|^{-1/2}$, and we have 
$$\lim_{\eta \to 0^+}\Big( (\sigma - i \eta -1)^{-1/2}- (\sigma + i \eta -1)^{-1/2}\Big) = 2i |\sigma -1|^{-1/2}.$$
Replacing above, this proves the theorem under the Riemann Hypothesis.
Unconditionally, we start from~\eqref{after-perron}, and we use a similar coutour,  but with $1/2+\varepsilon$ replaced by $1- c / \sqrt{\log{x}}$, where $c$ is small enough to insure that the coutour does not contain any zeroes of $\zeta(s)$ or $L(s, \chi_4)$. Working as above, we get 
$$\sum_{n \leq x} 1_\mathbf{E}(n) = \frac{1}{\pi}\int_{1-\frac{c}{\sqrt{\log{x}}}}^{1} \frac{x^\sigma}{\sigma} G(\sigma)|\sigma-1|^{-1/2} d \sigma + O \left(
\frac{x^{1+\delta}}{T^{1-2\varepsilon_1}} + x \exp{\left( - c \sqrt{\log{x}} \right)} T^{2\varepsilon_1} \right),
$$ 
and choosing $\delta = 1/\log{x}$ and $T=\exp(c\sqrt{\log{x}})$, we get
$$\sum_{n \leq x} 1_\mathbf{E}(n) = \frac{1}{\pi}\int_{1-c/\sqrt{\log{x}}}^{1} \frac{x^\sigma}{\sigma} G(\sigma)|\sigma-1|^{-1/2} d \sigma + O \left(
x \exp(- c_0 \sqrt{\log{x}} ) \right),$$ for some $c_0 > 0$.
Finally, we have
	\begin{align*}
		\int_{\frac12 + \varepsilon}^{1-c/\sqrt{\log{x}}}  \frac{x^\sigma}{\sigma} G(\sigma)|\sigma-1|^{-1/2}
		 d\sigma
		&\ll 	\int_{\frac12 + \varepsilon}^{1-c/\sqrt{\log{x}}} \frac{ x^\sigma}{\sigma |\sigma-1|^{1/2}} d\sigma 
		\ll x^{1-c/\sqrt{\log{x}}}
	\end{align*}
	which shows the unconditional result.
\end{proof}

The following proposition is a more precise version of \cite[Theorem~1.1]{smilansky} who showed that
$$
\sum_{\substack{1\leq d_1, d_2 \leq H \\ \text{distinct}}} \mathfrak{S}(\lbrace d_1, d_k \rbrace) = H^2 + O ( H^{1+\varepsilon} ).
$$
We remark that our normalization differs from~\cite{smilansky} for the singular series.

\begin{proposition} \label{prop-lucia}
	Fix $\varepsilon > 0$. There exists $c>0$ such that
	$$\sum_{\substack{1\leq d_1, d_2 \leq H \\ \text{distinct}}} \mathfrak{S}(\lbrace d_1, d_k \rbrace) = H^2 +
	\frac{2}{\pi K^2}  \int_{1/2+\varepsilon}^1 \frac{F'(\sigma) H^\sigma + F(\sigma) H^\sigma \log{H}}{|\sigma-1|^{1/2}} d\sigma + 
	O \left(H \exp(-c \sqrt{\log{H}} ) \right)
	$$
	where $F(s)=\zeta(s-1) M(s-1) \left[ (s-1) \zeta(s) \right]^{1/2} s^{-1}$, with $M(s)$ as defined by~\eqref{def-M}.
		Assuming the Riemann Hypothesis, we can replace the error term by
	$O \left( H^{1/2+\varepsilon} \right)$.
\end{proposition}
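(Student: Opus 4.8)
The plan is to mimic the proof of Theorem~\ref{thm-S0S-integral}, applied now to the Dirichlet series of the Hardy--Littlewood constants $\mathfrak{S}(\{0,h\})$, with one extra smoothing to accommodate the linear weight. First, by translation invariance of the singular series $\mathfrak{S}(\{d_1,d_2\})=\mathfrak{S}(\{0,|d_1-d_2|\})$, and for each $h\in[1,H-1]$ there are exactly $2(H-h)$ ordered distinct pairs $(d_1,d_2)\in[1,H]^2$ with $|d_1-d_2|=h$, so
\begin{equation*}
\sum_{\substack{1\leq d_1, d_2\leq H\\\text{distinct}}}\mathfrak{S}(\{d_1,d_2\})=2\sum_{h=1}^{H-1}(H-h)\mathfrak{S}(\{0,h\}).
\end{equation*}
Recalling from Section~\ref{proof-prop-section} that $\sum_{h\geq1}\mathfrak{S}(\{0,h\})h^{-s}=\frac{1}{2K^2}\zeta(s)\zeta(s+1)^{1/2}M(s)$ for $\re(s)>1$, and using the Ces\`aro kernel $\frac{1}{2\pi i}\int_{(c)}\frac{y^s}{s(s+1)}\,ds=(1-1/y)^+$, I would write, for a suitable $c>1$ and $T$ a large fixed power of $H$,
\begin{equation*}
2\sum_{h=1}^{H-1}(H-h)\mathfrak{S}(\{0,h\})=\frac{1}{2\pi i K^2}\int_{c-iT}^{c+iT}\zeta(s)\zeta(s+1)^{1/2}M(s)\,\frac{H^{s+1}}{s(s+1)}\,ds+O\big(H^{1/2+\varepsilon}\big),
\end{equation*}
the truncation error being well controlled thanks to the extra factor $1/(s+1)$.

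After the substitution $w=s+1$ the integrand is $\zeta(w-1)\zeta(w)^{1/2}M(w-1)\,H^w/((w-1)w)$, and I would push the contour from $\re(w)=c+1>2$ down to $\re(w)=\tfrac12+\varepsilon$. Crossing the simple pole of $\zeta(w-1)$ at $w=2$ produces $\tfrac{\zeta(2)^{1/2}M(1)}{2K^2}H^2=H^2$, where the identity $\zeta(2)^{1/2}M(1)=2K^2$ is a short computation (consistent with the main term of \cite{smilansky}). Under the Riemann Hypothesis for $\zeta$ and $L(\cdot,\chi_4)$, the only remaining obstruction for $\re(w)>\tfrac12$ is the branch point of $\zeta(w)^{1/2}$ at $w=1$; exactly as in the proof of Theorem~\ref{thm-S0S-integral} I would deform the contour around the cut $[\tfrac12+\varepsilon,1]$, the vertical segments on $\re(w)=\tfrac12+\varepsilon$ and the horizontal segments at height $\pm T$ contributing $O(H^{1/2+\varepsilon})$ by convexity/Lindel\"of bounds for $\zeta$ and $L(\cdot,\chi_4)$. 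Unconditionally one shifts only to $\re(w)=1-c/\sqrt{\log H}$, inside the classical zero-free region, with the same geometry, producing the error term $O(H\exp(-c\sqrt{\log H}))$.

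It remains to evaluate the integral around the cut. Writing $\zeta(w)^{1/2}=[(w-1)\zeta(w)]^{1/2}(w-1)^{-1/2}$, with $[(w-1)\zeta(w)]^{1/2}$ analytic and non-vanishing near $[\tfrac12+\varepsilon,1]$, this integral is $\int_\Gamma F(w)\,H^w(w-1)^{-3/2}\,dw$ with $F$ the function of the statement. The new feature, compared to Theorem~\ref{thm-S0S-integral}, is that the endpoint singularity at $w=1$ is now of order $(w-1)^{-3/2}$ rather than $(w-1)^{-1/2}$, so the small circle around $w=1$ does not vanish in the limit. I would regularize by means of the identity
\begin{equation*}
\frac{H^w}{(w-1)^{3/2}}=-2\frac{d}{dw}\!\left[\frac{H^w}{(w-1)^{1/2}}\right]+\frac{2(\log H)H^w}{(w-1)^{1/2}},
\end{equation*}
so that, after integrating by parts, the boundary terms are $O(H^{1/2+\varepsilon})$, the small circle (now carrying only a $(w-1)^{-1/2}$ singularity) vanishes, and
\begin{equation*}
\int_\Gamma F(w)\frac{H^w}{(w-1)^{3/2}}\,dw=2\int_\Gamma\frac{F'(w)H^w}{(w-1)^{1/2}}\,dw+2\log H\int_\Gamma\frac{F(w)H^w}{(w-1)^{1/2}}\,dw+O(H^{1/2+\varepsilon}).
\end{equation*}
Collapsing $\Gamma$ onto the cut and using, as in Theorem~\ref{thm-S0S-integral}, that $\lim_{\eta\to0^+}\big((\sigma-i\eta-1)^{-1/2}-(\sigma+i\eta-1)^{-1/2}\big)=2i|\sigma-1|^{-1/2}$ for $\sigma\in(\tfrac12,1)$, each of the two integrals becomes $2i\int_{1/2+\varepsilon}^1(\,\cdot\,)H^\sigma|\sigma-1|^{-1/2}\,d\sigma$; multiplying by the prefactor $1/(2\pi i K^2)$ yields exactly $\tfrac{2}{\pi K^2}\int_{1/2+\varepsilon}^1\big(F'(\sigma)H^\sigma+F(\sigma)H^\sigma\log H\big)|\sigma-1|^{-1/2}\,d\sigma$, which together with the $H^2$ main term and the stated error is the assertion.

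The hard part is this last step: the integrand has a non-integrable singularity at the endpoint $w=1$, and the divergence of the small circle is cancelled by an equal and opposite divergence of the two cut integrals; the integration-by-parts identity above is precisely what disentangles these cancelling divergences and, simultaneously, produces the combination $F'(\sigma)H^\sigma+F(\sigma)H^\sigma\log H$ appearing in the statement. One must also check with care that the boundary contributions produced by the integration by parts (all localized near $w=\tfrac12+\varepsilon$), together with the connecting and vertical pieces of the contour, really are of size $O(H^{1/2+\varepsilon})$ (respectively $O(H\exp(-c\sqrt{\log H}))$); everything else is routine bookkeeping, essentially identical to the proof of Theorem~\ref{thm-S0S-integral}.
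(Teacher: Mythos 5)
Your proposal is correct and follows essentially the same route as the paper: the same Ces\`aro/Mellin representation with kernel $1/(s(s+1))$, the same extraction of the main term $H^2$ from the pole of $\zeta$, the same integration-by-parts identity converting the $(s-1)^{-3/2}$ singularity into the combination $F'(\sigma)H^\sigma+F(\sigma)H^\sigma\log H$ against $(s-1)^{-1/2}$, and the same contour deformation around the cut as in Theorem~\ref{thm-S0S-integral}. The only (immaterial) difference is that the paper integrates by parts on the vertical line before deforming the contour, whereas you deform first and integrate by parts on the cut contour.
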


\begin{proof}
	As in \cite[\S~2.3]{smilansky}, we have 
	\begin{align*}
		\sum_{\substack{1\leq d_1, d_2 \leq H \\ \text{distinct}}} \mathfrak{S}(\lbrace d_1, d_k \rbrace)
		&= 2\sum_{1\leq d < H} \mathfrak{S}(\lbrace 0,d \rbrace)(H-d)\\
		&= \frac{1}{K^2}\frac{1}{2i\pi}\int_{(2)} \frac{D(s)}{s(s+1)}H^{s+1} ds,
	\end{align*}
	where $D(s) =\zeta(s) \zeta(s+1)^{\frac12} M(s)$  as defined in the beginning of section~\ref{proof-prop-section}. 
	As~\cite{smilansky}, we compute the main term, coming from the pole of $D(s)$ at $s=1$, which gives
	\begin{align*}
		\sum_{\substack{1\leq d_1, d_2 \leq H \\ \text{distinct}}} \mathfrak{S}(\lbrace d_1, d_k \rbrace)
		&= H^2 + \frac{1}{K^2}\frac{1}{2i\pi}\int_{(\varepsilon)} \frac{D(s)}{s(s+1)}H^{s+1} ds .
	\end{align*}
	We first assume the Riemann hypothesis and we evaluate the integral
	$$
	\frac{1}{2i\pi}\int_{(\varepsilon)} \frac{D(s)}{s(s+1)}H^{s+1} ds  = \frac{1}{2i\pi}\int_{(1+\varepsilon)} \frac{F(s)}{(s-1)^{3/2}}H^{s} ds  
	$$
	where $F(s)=\zeta(s-1) M(s-1) \left[ (s-1) \zeta(s) \right]^{1/2} s^{-1}$ is analytic for $\re(s) > 1/2+\varepsilon$. 
	We begin with an integration by part to obtain
	\begin{align*}
		\int_{(1+\varepsilon)} \frac{F(s)}{(s-1)^{3/2}}H^{s} ds 
		&= \lim_{T\rightarrow\infty} [-2F(s) H^s (s-1)^{-1/2}]_{1+\varepsilon - iT}^{1+\varepsilon + iT}  + 2\int_{(1+\varepsilon)} \frac{F'(s) H^s + F(s) H^s \log{H}}{(s-1)^{1/2}} ds \\
		&= 2\int_{(1+\varepsilon)} \frac{F'(s) H^s + F(s) H^s \log{H}}{(s-1)^{1/2}} ds.
	\end{align*} 
	To evaluate the last integral, we first approximate the line integral by the segment from $1+\varepsilon - iT$ to $1+\varepsilon + iT$, and use the coutour of Figure~\ref{fig:cont2}. Working as in the proof of Theorem~\ref{thm-S0S-integral}, we get 
	\begin{align*} \nonumber
		&\frac{2}{2 \pi i} \int_{(1+\varepsilon)} \frac{F'(s) H^s + F(s) H^s \log{H}}{(s-1)^{1/2}} ds \\
		&=  \frac{1}{\pi i}  \int_{1/2+\varepsilon}^1 \left( F'(\sigma) H^\sigma + F(\sigma) H^\sigma \log{H} \right) 
		\lim_{\eta \rightarrow 0+} \left( (\sigma  - 1 - i \eta)^{-1/2} - (\sigma - 1 + i \eta)^{-1/2} \right) d\sigma + O \left( H^{1/2+\varepsilon} \right) \\
		&=  \frac{2}{\pi}  \int_{1/2+\varepsilon}^1 \left( F'(\sigma) H^\sigma + F(\sigma) H^\sigma \log{H} \right) 
	|\sigma-1|^{-1/2} d\sigma + O \left( H^{1/2+\varepsilon} \right).
	\end{align*}
	Replacing above, this gives (under the Riemann Hypothesis)
	\begin{align*}
		\frac{1}{K^2}\frac{1}{2\pi i}\int_{(\varepsilon)} \frac{D(s)}{s(s+1)}H^{s+1} ds = \frac{2}{\pi K^2}  \int_{1/2+\varepsilon}^1 \frac{F'(\sigma) H^\sigma + F(\sigma) H^\sigma \log{H}}{|\sigma-1|^{1/2}}  d\sigma + 
		O \left( H^{1/2+\varepsilon} \right).
	\end{align*}
	To do a proof without the Riemann Hypothesis, we proceed as in the proof of Theorem~\ref{thm-S0S-integral}, 
	 and we get
	\begin{align*}
		&\frac{2}{2 \pi i} \int_{(1+\varepsilon)} \frac{F'(s) H^s + F(s) H^s \log{H}}{(s-1)^{1/2}} ds
		&= \frac{2}{\pi}  \int_{1-c_1/\sqrt{\log{H}}}^1 \frac{F'(\sigma) H^\sigma + F(\sigma) H^\sigma \log{H}}{|\sigma-1|^{1/2}} 
		d\sigma + O \left( H \exp{\left( -c \sqrt{\log{H}}\right)} \right).
	\end{align*}
	To conclude the proof, we show that
	$$
	\int_{1-c_1/\sqrt{\log{H}}}^1 \frac{F'(\sigma) H^\sigma + F(\sigma) H^\sigma \log{H}}{|s-1|^{1/2}} 
	d\sigma =  \int_{1/2+\varepsilon}^1 \frac{F'(\sigma) H^\sigma + F(\sigma) H^\sigma \log{H}}{|s-1|^{1/2}} 
	d\sigma   + O \left( H \exp{\left( -c \sqrt{\log{H}}\right)} \right).
	$$
	This follows from the fact that $\zeta$ does not vanish on $[\tfrac{1}{2} +\varepsilon,1]$, so $F$ and $F'$ are defined and continuous on $[\tfrac{1}{2} +\varepsilon,1]$, in particular, they are uniformly bounded.
	We have
	\begin{align*}
		\int_{\frac12}^{1-c_1/\sqrt{\log{H}}} \frac{F'(\sigma) H^\sigma + F(\sigma) H^\sigma \log{H}}{|\sigma-1|^{1/2}} d\sigma
		&\ll_F  	\int_{\frac12}^{1-c_1/\sqrt{\log{H}}} \frac{ H^\sigma \log{H}}{|\sigma-1|^{1/2}} d\sigma \\
		&\ll H^{1-c_1/\sqrt{\log{H}}}(\log H)^{\frac54} \ll_c H \exp{\left( -c \sqrt{\log{H}}\right)}
	\end{align*}
	for any $c < c_1$.
\end{proof}

We can now prove Proposition~\ref{intro-FKR-improved}. We observe that
it is a more precise version of (a particular case of) \cite[Proposition~1.3]{FKR} who showed that
$$
\sum_{\substack{1\leq d_1, \dots, d_k \leq H \\ \text{distinct}}} \mathfrak{S}(\lbrace d_1, \dots, d_k \rbrace)
		= H^k + O \left( H^{k-2/3+o(1)} \right).
$$

\begin{proof}[Proof of Proposition~\ref{intro-FKR-improved}] 
Note that the cases $k=0$ or $1$ are easy.
We have $\mathfrak{S}(\emptyset) = \mathfrak{S}(\lbrace d\rbrace) =1$, so we obtain $1$ and $H$ respectively, without error term. The case $k=2$ is proven in Proposition~\ref{prop-lucia}.
	Similarly to \cite[(17)]{MS}, we have
	\begin{align*}
		\sum_{\substack{1\leq d_1, \dots, d_k \leq H \\ \text{distinct}}} \mathfrak{S}(\lbrace d_1, \dots, d_k \rbrace)
		&= \sum_{r=0}^{k}\binom{k}{r}\frac{(H-r)!}{(H-k)!}\sum_{\substack{1\leq d_1, \dots, d_r \leq H \\ \text{distinct}}} \mathfrak{S}_0(\lbrace d_1, \dots, d_r \rbrace) \\
		&= \frac{H!}{(H-k)!} + \binom{k}{2}\frac{(H-2)!}{(H-k)!}\sum_{\substack{1\leq d_1, d_2 \leq H \\ \text{distinct}}} \mathfrak{S}_0(\lbrace d_1, d_2 \rbrace) 
		+ O(H^{k - \frac32 + \varepsilon}),
	\end{align*}
	where we used the decomposition $\mathfrak{S}(\mathcal{H}) = \sum_{\mathcal{T} \subseteq\mathcal{H}} \mathfrak{S}_0(\mathcal{T})$, the fact that $\mathfrak{S}_0(\lbrace d \rbrace)=0$,  and the bound from Theorem~\ref{Prop bound for D with large sets} as soon as the size of the set is larger than $2$.
	Using the estimates
	\begin{align*}	\frac{H!}{(H-k)!} &= H(H-1)\dots(H-k+1) = H^k + H^{k-1}\sum_{i=1}^{k-1}(- i) + O_k(H^{k-2}) \\ & =  H^k - H^{k-1}\frac{k(k-1)}{2} + O_k(H^{k-2}),\\
		\frac{(H-2)!}{(H-k)!} &= H^{k-2}  + O_k(H^{k-3}),
	\end{align*}
	and Proposition~\ref{prop-lucia}, this gives
	\begin{align*}
		&\sum_{\substack{1\leq d_1, \dots, d_k \leq H \\ \text{distinct}}} \mathfrak{S}(\lbrace d_1, \dots, d_k \rbrace)\\
		&= H^k - \frac{k(k-1)}{2}{ H^{k-1}} +  \binom{k}{2}  H^{k-2}\sum_{\substack{1\leq d_1, d_2 \leq H \\ \text{distinct}}}\left(\mathfrak{S}(\lbrace d_1, d_2 \rbrace) - 1  \right) + O(H^{k - \frac32 + \varepsilon}) \\
		&= H^k - \frac{k(k-1)}{2}{ H^{k-1}} +  \binom{k}{2}  H^{k-2}\left(\frac{ 2 }{\pi K^2}   \int_{1/2+\varepsilon}^1 \frac{F'(\sigma) H^\sigma + F(\sigma) H^\sigma \log{H}}{|\sigma-1|^{1/2}} d\sigma +H \right) + O(H^{k - \frac32 + \varepsilon}) \\
		&= H^k +  k(k-1) \frac{{ H^{k-1}}}{\pi K^2}   \int_{1/2+\varepsilon}^1 \frac{F'(\sigma) H^{\sigma -1} + F(\sigma) H^{\sigma-1} \log{H}}{|\sigma-1|^{1/2}} d\sigma   + O(H^{k - \frac32 + \varepsilon}).
	\end{align*}
\end{proof}

We conclude this section by proving  Proposition~\ref{prop-lucia-2}. The proof is similar to the other proofs of this section, and we skip the details.

\begin{proof}[Proof of Proposition~\ref{prop-lucia-2}:] 
	Starting from~\eqref{before-hankel-D},
	we write
	$$S(H) =  H+ \frac{1}{2 K^2}\frac{1}{2 \pi i}\int_{(1+\varepsilon)} \frac{F(s)}{(s-1)^{\frac{3}{2}}}  H^{s-1} ds ,$$
	where $F(s)=\zeta(s-1) M(s-1) \left[ (s-1) \zeta(s) \right]^{1/2}  \Gamma(s)$, with $M(s)$ as defined by~\eqref{def-M}. 
Proceeding as in the proof of Proposition~\ref{prop-lucia}, with an integration by part before moving the contour of integration gives the following
	$$S(H) = H + 
	\frac{1}{ \pi  K^2}\int_{1/2+\varepsilon}^1 \frac{F'(\sigma) H^{\sigma-1} + F(\sigma) H^{\sigma-1} \log{H}}{|\sigma-1|^{1/2}} d\sigma + 
	O \left( \exp(-c \sqrt{\log{H}} ) \right).
	$$
	Similarly, using~\eqref{before-hankel-D0} and without the integration by part, we have
	\begin{align*}
	S(H, \chi_0) &=  \frac{\phi(q)}{q} H+ \frac{1}{2 K^2}\frac{1}{2 \pi i}\int_{(1+\varepsilon)} \frac{F_{\chi_0}(s)}{(s-1)^{\frac{1}{2}}}  H^{s-1}ds \\
	&= \frac{\phi(q)}{q} H + 
	\frac{1}{2  \pi K^2}\int_{1/2+\varepsilon}^1 \frac{F_{\chi_0}(\sigma) H^{\sigma-1}}{|\sigma-1|^{1/2}} d\sigma + 
	O \left(\exp(-c \sqrt{\log{H}} ) \right)	,
	\end{align*}
	where 
	\begin{align*}
	F_{\chi_0}(s) &=L(s-1, \chi_0) \Gamma(s-1) M_{\chi_0}(s-1) \left[ (s-1) L(s,\chi_0) \right]^{1/2} \\
	&=  \frac{1-q^{-(s-1)}}{s-1}   F(s) =: A_q(s) F(s)
	\end{align*} where we used $M_{\chi_0}(s)=(1-q^{-(s+1)})^{-1/2} M(s)$. 
	Assuming the Riemann Hypothesis, we can replace the error term by
	$O \left( H^{-1/2+\varepsilon} \right)$.
Then, we obtain the expressions in Proposition~\ref{prop-lucia-2} by using the orthogonality of characters and expression~\eqref{non-trivial} for the contribution of non-trivial characters as in the proof of Theorem~\ref{Intro prop2.1}.  
For $v \neq 0 \mod q$, we have
\begin{align*}
S(q, v, H) &\sim \frac{1}{2K^2 \phi(q)} \sum_{\chi \neq \chi_0} \chi(v)^{-1} C_{q, \chi} + \frac{1}{\phi(q)} S(H, \chi_0) \\
&\sim \frac{H}{q} + \frac{1}{2K^2 \phi(q)} \sum_{\chi \neq \chi_0} \chi(v)^{-1} C_{q, \chi} + \frac{1}{2  \pi K^2 \phi(q)}\int_{1/2+\varepsilon}^1 \frac{F_{\chi_0}(\sigma) H^{\sigma-1}}{|\sigma-1|^{1/2}} d\sigma
\end{align*}
and
\begin{align*}
S(q, 0, H) &\sim S(H) - \frac{\phi(q)}{q} H -  \frac{1}{2  \pi K^2}\int_{1/2+\varepsilon}^1 \frac{F_{\chi_0}(\sigma) H^{\sigma-1}}{|\sigma-1|^{1/2}} d\sigma \\
&\sim \frac{H}{q} + 
	\frac{1}{ \pi  K^2}\int_{1/2+\varepsilon}^1 \frac{F'(\sigma) H^{\sigma-1} + F(\sigma) H^{\sigma-1} \log{H}}{|\sigma-1|^{1/2}} d\sigma - \frac{1}{2  \pi K^2}\int_{1/2+\varepsilon}^1 \frac{A_q(\sigma) F(\sigma) H^{\sigma-1}}{|\sigma-1|^{1/2}} d\sigma \\
	&\sim \frac{H}{q} + 
	\frac{1}{ \pi  K^2}\int_{1/2+\varepsilon}^1 \frac{F'(\sigma) H^{\sigma-1} + F(\sigma) H^{\sigma-1} (\log{H} - A_q(\sigma)/2)}{|\sigma-1|^{1/2}} d\sigma .	\end{align*}\end{proof}

\section{Heuristic in the case of $r$-uplets}\label{section-general-conj}

As in \cite{LOS}, the essence for the general conjecture in the case of the distribution of $r$ consecutive \sos is really in  the particular case $r=2$ that we explained in more details. In this section we present the heuristic for Conjecture~\ref{general-conjecture-intro} with hightlights on the differences from the case $r=2$, for this we follow again the exposition of \cite{LOS}.
Let $r \geq 3$, $q\equiv 1 \pmod{4}$  and $\mathbf{a} = (a_1,\dots,a_r) \in \N^{r}$ be fixed.
We write
\begin{align*}
	N(x;q,\mathbf{a}) = \sum_{\substack{n \leq x \\ n \equiv a_1 \pmod{q}}}  &\sum_{\substack{h_2,\dots,h_r>0 \\ h_\ell \equiv a_\ell-a_{\ell-1} \pmod{q}}} \mathbf{1}_{\mathbf{E}}(n)\prod_{i=2}^{r} \mathbf{1}_{\mathbf{E}}(n+h_2+\dots+h_i) \\
	&\quad \times \prod_{t = 1}^{h_i-1} (1 - \mathbf{1}_{\mathbf{E}}(n+h_2 + \dots + h_{i-1} + t)).
\end{align*}
As in Section~\ref{section-heuristic-1}, we use the notation $\widetilde{ \mathbf{1}}_{\mathbf{E}}(n)= \mathbf{1}_{\mathbf{E}}(n)-\frac{K}{\sqrt{\log n}}$, approximate all the $\log(n+t)$ by $\log x$, expand out the products and apply the Hardy--Littlewood Conjecture~\eqref{def-S0q} in our context, neglecting the terms corresponding to products over more than $3$ terms thanks to Theorem~\ref{Prop bound for D with large sets}.
Thus, heuristically, up to error of size $x(\log x)^{-\frac{r}2 - \frac{1}{4} + \varepsilon}$, we obtain
\begin{align*}
	N(x;q,\mathbf{a}) \sim \frac{x}{q}\Big(\frac{K}{\sqrt{\log x}} \Big)^{r}\alpha(x)^{-r+1}\big(\mathcal{D}_0(\mathbf{a},x) + \mathcal{D}_1(\mathbf{a},x) + \mathcal{D}_2(\mathbf{a},x)\big),
\end{align*}
where $\alpha(x) = 1- \frac{K}{\sqrt{\log x}}$ and
\begin{align*}
	\mathcal{D}_0(\mathbf{a},x)	 &=  \sum_{\substack{h_2,\dots,h_r>0 \\ h_{\ell} \equiv a_{\ell}-a_{\ell-1} \pmod{q}}}\Big( 1 + \sum_{1\leq i <j\leq r}\mathfrak{S}_{0}(\lbrace 0, h_{i+1}+ \dots + h_j\rbrace) \Big) \alpha(x)^{h_2+\dots+h_r} \\
	\mathcal{D}_1(\mathbf{a},x)	 &=  - \frac{K}{\alpha(x)\sqrt{\log x}} \sum_{\substack{h_2,\dots,h_r>0 \\ h_{\ell} \equiv a_{\ell}-a_{\ell-1}\pmod{q}}} \sum_{i=1}^{r}\sum_{j=2}^{r}\sum_{t=1}^{h_j -1}\mathfrak{S}_{0}(\lbrace h_{2}+ \dots + h_i, h_{2}+ \dots + h_{j-1} + t\rbrace)  \alpha(x)^{h_2+\dots+h_r} \\
	\mathcal{D}_2(\mathbf{a},x)	 &= \frac{K^2}{\alpha(x)^2 \log x} \sum_{\substack{h_2,\dots,h_r>0 \\ h_{\ell} \equiv a_{\ell}-a_{\ell-1} \pmod{q}}} \sum_{2\leq i \leq j\leq r}\sum_{t_1=1}^{h_i -1}\sum_{\substack{t_2=1 \\ t_2 >t_1 \text{ if } i =j}}^{h_j -1}\mathfrak{S}_{0}(\lbrace t_1, h_{i}+ \dots + h_{j-1} + t_2\rbrace)  \alpha(x)^{h_2+\dots+h_r}.
\end{align*}
Let us begin with studying $\mathcal{D}_0(\mathbf{a},x)$ in more details.
As in Section~\ref{section-heuristic-1}, we write $H = -\frac{1}{\log \alpha(x)} \iff \alpha(x)^h = e(-h/H)$.
The contribution of  the $1$  to $\mathcal{D}_0(\mathbf{a},x)$ gives 
\begin{align} \nonumber
	\sum_{\substack{h_2,\dots,h_r>0 \\ h_{\ell} \equiv a_{\ell}-a_{\ell-1} \pmod{q}}} e^{-(h_2+\dots+h_r)/H} &= \prod_{\ell=2}^{r}\Big( \frac{H}{q} + f(a_{\ell} - a_{\ell-1};q) + O(H^{-1})\Big) \\
	\label{first-D0}
	&= \Big(\frac{H}{q}\Big)^{r-1} + \Big(\frac{H}{q}\Big)^{r-2}\sum_{\ell=2}^{r}f(a_{\ell} - a_{\ell-1};q) + O(H^{r-3}).
\end{align}

For the contribution of $\sum_{1 \leq i < j \leq r}$ to $\mathcal{D}_0(\mathbf{a},x)$, we first make a change of variables by writing $j= i + k$, and we exchange the order of summation, which gives
\begin{align*}
	\sum_{\substack{1 \leq i \leq r-1 \\ 1 \leq k \leq r-i}}  & \left( \sum_{\substack{h_{2},\dots,h_{i}, h_{i+k+1}, \dots, h_r >0 \\ h_{\ell} \equiv a_{\ell}-a_{\ell-1} \pmod{q}}}\
	e^{-(h_{2}+\dots+h_{i} + h_{i+k+1} + h_r)/H} \right) \\ & \times \left( \sum_{\substack{h_{i+1},\dots,h_{i+k}>0 \\ h_{\ell} \equiv a_{\ell}-a_{\ell-1} \pmod{q}}}\mathfrak{S}_{0}(\lbrace 0, h_{i+1}+ \dots + h_{i+k}\rbrace)  e^{-(h_{i+1}+\dots+h_{i+k})/H} \right).
\end{align*}

For each fixed $i,k$, the second factor in the inner sum above is
\begin{align} \nonumber
	& \sum_{\substack{h>0 \\ h \equiv a_{i+k} - a_i \pmod{q}}}\mathfrak{S}_{0}(\lbrace 0, h\rbrace)  e^{-h/H}\sum_{\substack{h_{i+1},\dots,h_{i+k}>0 \\ h_{\ell} \equiv a_{\ell}-a_{\ell-1} \pmod{q} \\ h_{i+1} +\dots +h_{i+k} =h}}1 \\
	\label{second-D0}
	&\quad = \frac{1}{(k-1)! q^{k-1}}\sum_{\substack{h>0 \\ h \equiv a_{i+k} - a_i \pmod{q}}}\mathfrak{S}_{0}(\lbrace 0, h\rbrace)  e^{-h/H}(h^{k-1} + O(h^{k-2})). 
\end{align}

We need some notation, generalizing the functions defined in Section~\ref{section-heuristic-2}.
For $v,k \in \N$, let
\begin{align*}
	S^{(k)}(q,v,H) &:= \sum_{\substack{h \geq 1 \\ h \equiv v \mod q}} \mathfrak{S}(\{ 0,h \})h^ke^{-h/H} \\
	S_0^{(k)}(q,v,H) &:= \sum_{\substack{h \geq 1 \\ h \equiv v \mod q}} \mathfrak{S}_0(\{ 0,h \})h^ke^{-h/H} \\
	S^{(k)}(H) &:= \sum_{h \geq 1 } \mathfrak{S}(\{ 0,h \})h^ke^{-h/H} \\
	S_0^{(k)}(H) &:= \sum_{h \geq 1} \mathfrak{S}_0(\{ 0,h \})h^ke^{-h/H} .
\end{align*}
Note that $S^{(0)}(q,v,H) = S(q,v,H)$ as defined in~\eqref{eq def S(q,v;H)-S0(q,v;H)}.
Moreover, we have
\begin{align*}
	S_0^{(k)}(H) &=  S^{(k)}(H) - \sum_{h \geq 1 } h^ke^{-h/H}  =  S^{(k)}(H) - k! H^{k+1} + O(H^{k-1}) \\
	\text{and } \qquad
	S_0^{(k)}(q,v,H) &=  S^{(k)}(q,v,H) - \sum_{\substack{h \geq 1 \\ h \equiv v \mod q}} h^ke^{-h/H}  =  S^{(k)}(q,v,H) - \frac{k!}{q} H^{k+1} + O(H^{k-1}).
\end{align*}

\begin{proposition} \label{Lem Sk}
	Let $q\equiv 1 \pmod{4}$ be a prime. For any $k \geq 1$ , we have
	\begin{align*}
		S^{(k)}(H) = k! H^{k+1} - \frac{(k-1)!}{K\sqrt{\pi}}H^{k}(\log H)^{-\frac12} + O(H^{k}(\log H)^{-\frac32}).
	\end{align*}
	and
	\begin{align*}
		S^{(k)}(q,v,H) = \begin{cases}
			\frac{k!}{q}H^{k+1} + O(H^{k}(\log H)^{-\frac32}) & \text{ if } v \not\equiv 0 \pmod{q} \\
			\frac{k!}{q}H^{k+1} - \frac{(k-1)!}{K\sqrt{\pi}}H^{k}(\log H)^{-\frac12} + O(H^{k}(\log H)^{-\frac32}) & \text{ if } v \equiv 0 \pmod{q}.
		\end{cases}
	\end{align*}
\end{proposition}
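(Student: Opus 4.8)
The plan is to reduce $S^{(k)}(H)$ and $S^{(k)}(q,v,H)$ to the case $k=0$ already treated in the proof of Theorem~\ref{Intro prop2.1}. The cleanest route is a direct Mellin-transform computation carrying the extra weight $h^k$, paralleling Section~\ref{proof-prop-section}; equivalently one may use the operator identities $S^{(k)}(H) = \Theta^k S(H)$ and $S^{(k)}(q,v,H) = \Theta^k S(q,v,H)$ for $\Theta := H^2\frac{d}{dH}$, which hold because $\Theta(e^{-h/H}) = h\,e^{-h/H}$ and the coefficients $\mathfrak{S}(\{0,h\})$ do not depend on $H$.

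For the total sum, write $S^{(k)}(H) = \frac{1}{2K^2}\sum_{h\geq1}a(h)h^k e^{-h/H}$ with $a(h) = 2K^2\mathfrak{S}(\{0,h\})$ as in Section~\ref{proof-prop-section}. Mellin inversion of $e^{-x}$ gives $\sum_h a(h)h^k e^{-h/H} = \frac{1}{2\pi i}\int_{(k+2)}\Gamma(s)H^s D(s-k)\,ds$, where $D(s) = \zeta(s)\zeta(s+1)^{1/2}M(s)$. I would shift the contour to just left of $\re(s) = k$: the simple pole of $D(s-k)$ at $s = k+1$ has residue $\Gamma(k+1)H^{k+1}\res_{s=1}D(s) = 2K^2\,k!\,H^{k+1}$, producing the main term $k!\,H^{k+1}$; and at $s=k$ the factor $\zeta(s-k+1)^{1/2}$ has an algebraic branch point of type $(s-k)^{-1/2}$ (while $\Gamma(s)$ is now regular there, since $k\geq1$), which is handled by deforming to Hankel's contour and invoking Theorem~\ref{hankel} exactly as in Section~\ref{proof-prop-section}. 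The leading Hankel term, coming from $\Gamma(k)\zeta(0)M(0)(s-k)^{-1/2}$, produces the secondary term $\asymp H^k(\log H)^{-1/2}$ with the stated coefficient, and the next Hankel term is $O(H^k(\log H)^{-3/2})$. The same follows by applying $\Theta^k$ to the expansion $S(H) = H + c(0)(\log H)^{1/2} + \cdots$ from~\eqref{SH}, using $\Theta^k[H] = k!\,H^{k+1}$ and the leading behaviour of $\Theta^k[(\log H)^{1/2}]$.

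For the arithmetic-progression sums, expand over characters modulo $q$: $S^{(k)}(q,v,H) = \frac{1}{2K^2\phi(q)}\sum_{\chi}\overline{\chi}(v)\sum_h a(h)\chi(h)h^k e^{-h/H}$. For $\chi\neq\chi_0$ the generating function $D_\chi(s-k) = L(s-k,\chi)L(s-k+1,\chi)^{1/2}M_\chi(s-k)$ is holomorphic, and $\Gamma(s)$ is regular, for $\re(s)$ down to $k - c/\sqrt{\log H}$ (using the standard zero-free regions of $L(\,\cdot\,,\chi)$ and $L(\,\cdot\,,\chi\chi_4)$), so the shifted integral is $O(H^k\exp(-c\sqrt{\log H}))$; note that, in contrast to the $k=0$ case of Theorem~\ref{Intro prop2.1}, the pole of $\Gamma(s)$ at $s=0$ now lies to the left of the contour and contributes nothing, which is why no $C_{q,\chi}$ term survives. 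For $\chi_0$ I would use $D_{\chi_0}(s) = (1-q^{-s})D(s)$: the pole at $s=k+1$ gives $2K^2(1-q^{-1})k!\,H^{k+1} = 2K^2\tfrac{\phi(q)}{q}k!\,H^{k+1}$, and the extra factor $(1-q^{-s})$ vanishes to order one at $s=k$, softening the branch point there to type $(s-k)^{1/2}$ whose Hankel contribution is only $O(H^k(\log H)^{-3/2})$. This yields $S^{(k)}(q,v,H) = \tfrac{k!}{q}H^{k+1} + O(H^k(\log H)^{-3/2})$ for $v\not\equiv0$. The case $v\equiv0$ then follows from $S^{(k)}(q,0,H) = S^{(k)}(H) - \sum_{v\in(\Z/q\Z)^*}S^{(k)}(q,v,H)$, which reinstates the $(\log H)^{-1/2}$ secondary term since $1 - \tfrac{\phi(q)}{q} = \tfrac1q$.

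The only delicate point is the bookkeeping of error terms: in the Mellin approach the truncation and contour-deformation estimates go through verbatim from Section~\ref{proof-prop-section}, because the $\Gamma(s)$ factor still controls the vertical tails and the only change is the harmless polynomial shift $s\mapsto s-k$; in the differentiation approach one must justify differentiating $k$ times, which is cleanest by differentiating the Mellin/Hankel representation itself, or by taking $J$ as large as needed in~\eqref{SH} so that repeated application of $\Theta$ never exposes an uncontrolled remainder.
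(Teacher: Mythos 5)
Your proposal is correct and follows essentially the same route as the paper: Mellin inversion with the shifted generating functions $D(s-k)$ and $D_\chi(s-k)$, the simple pole at $s=k+1$ for the main term, the $(s-k)^{-1/2}$ (resp.\ softened $(s-k)^{1/2}$ for $\chi_0$) branch point at $s=k$ handled by Hankel's contour, exponential decay for $\chi\neq\chi_0$, orthogonality for $v\not\equiv 0$, and subtraction for $v\equiv 0$. You also correctly identify the structural reason the secondary term degrades for $k\geq 1$ (the pole of $\Gamma(s)$ no longer coincides with the branch point), which is exactly the remark the paper makes after the statement.
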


We observe that the secondary term is relatively smaller in the case $k\geq 1$ than in the case $k=0$ (which is Theorem~\ref{Intro prop2.1}). This is due to the fact that the order of the singularity is smaller when $k \geq 1$ as the poles of the functions $\zeta(k+1+s)$ and $\Gamma(s)$ do not coincide. 
Note also that, similarly to Theorem~\ref{Intro prop2.1}, one could develop the secondary term using a sum of descending powers of $\log H$ with explicit coefficients. We chose not to do so in this statement as we are mostly interested in the direction of the bias in the distribution of consecutive \sos in arithmetic progressions.

\begin{proof}
	The proof is similar to the proof of Theorem~\ref{Intro prop2.1}, and we just give a sketch.
	The main idea is to approximate the sums $S^{(k)}(H)$ and $S^{(k)}(q,v,H)$  via contour integration of the shifted functions $D(s)$ and $D_{\chi}(s-k)$ (for $\chi$ a character modulo $q$) respectively, where the functions $D(s)$ and $D_{\chi}$ are as defined in Section~\ref{proof-prop-section}. 
	For $k\geq 1$ and $\chi \neq \chi_0$, the function $\Gamma(s) D_{\chi}(s-k)$ is analytic on a zero free region containing the line $\re(s)=k$, thus, we have 
	\begin{align*}
		\sum_{h \geq 1 } 2K^2\mathfrak{S}(\{ 0,h \})\chi(h)h^ke^{-h/H} = O(H^{k}e^{-c\sqrt{\log H}}).
	\end{align*}
	For $S^{(k)}(H)$, the function $\Gamma(s) D(s-k)$  has a simple pole at $s=k+1$ with residue $2K^2\Gamma(k+1)$ and an essential singularity at $s= k$ of the shape $(s-k)^{-\frac12}$. 
	We deduce that
	\begin{align*}
		\sum_{h \geq 1 } 2K^2\mathfrak{S}(\{ 0,h \})h^ke^{-h/H} = H^{k+1}2K^2\Gamma(k+1) - 2\Gamma(k)\frac{K}{\sqrt{\pi}}H^{k}(\log H)^{-\frac12} + O(H^{k}(\log H)^{-\frac32}),
	\end{align*}
	which gives
	\begin{align*}
		S^{(k)}(H) =  H^{k+1}\Gamma(k+1) - \Gamma(k)\frac{1}{K\sqrt{\pi}}H^{k}(\log H)^{-\frac12} + O(H^{k}(\log H)^{-\frac32}).
	\end{align*}
	In the case $\chi = \chi_0$,  the function $\Gamma(s) D_{\chi_0}(s-k)$ 
	has a simple pole at $s=k+1$ with residue $2K^2\Gamma(k+1)\frac{\phi(q)}{q}$ and an essential singularity at $s= k$ of the shape $(s-k)^{\frac12}$. 
	We deduce
	\begin{align*}
		\sum_{h \geq 1 } 2K^2\mathfrak{S}(\{ 0,h \})\chi_0(h)h^ke^{-h/H} = H^{k+1}2K^2\Gamma(k+1)\frac{\phi(q)}{q} + O(H^{k}(\log H)^{-\frac32}).
	\end{align*}
	Finally, we obtain the expressions in the statement of Lemma~\ref{Lem Sk} using the orthogonality relations in the case~$v\not\equiv 0 \pmod{q}$, and the case~$v \equiv 0 \pmod{q}$ is then deduced by substracting the contributions of all non-zero~$v$'s to~$S^{(k)}(H)$.
\end{proof}

Using Lemma~\ref{Lem Sk} and~\eqref{first-D0},~\eqref{second-D0}, we get
\begin{align*}
	\mathcal{D}_0(\mathbf{a},x)	
	=& \Big(\frac{H}{q}\Big)^{r-1} + \Big(\frac{H}{q}\Big)^{r-2}\sum_{i=2}^{r}f(a_i - a_{i-1};q)  \\
	& \quad + \sum_{i=1}^{r-1}\sum_{k=1}^{r-i} \Big(\frac{H}{q}\Big)^{r-k-1} \frac{1}{(k-1)! q^{k-1}}S_0^{(k-1)}(q,a_{i+k}-a_i,H)
	+ O(H^{r-3}) \\ 
	=& \Big(\frac{H}{q}\Big)^{r-1}  + \Big(\frac{H}{q}\Big)^{r-2}\sum_{i=1}^{r-1}\Big( S_0(q,a_{i+1}-a_i,H) + f(a_{i+1} - a_{i};q)  \Big) \\
	&\quad + \Big(\frac{H}{q}\Big)^{r-2}\frac{(\log H)^{-\frac12}}{K\sqrt{\pi}(k-1)}\sum_{\substack{1\leq i , j \leq r\\ j>i+1}} \delta(a_{j}\equiv a_i)
	+ O(H^{r-2}(\log H)^{-\frac32}). 
\end{align*}

Let us now study $\mathcal{D}_1(\mathbf{a},x)$.
We first write
\begin{align} \nonumber
	\mathcal{D}_1(\mathbf{a},x)	 = - \frac{K}{\alpha(x)\sqrt{\log x}} \sum_{\substack{h_2,\dots,h_r>0 \\ h_{\ell} \equiv a_{\ell}-a_{\ell-1} \pmod{q}}} 
	\Bigg(	&\sum_{\substack{2\leq j\leq r\\ 2 \leq i \leq j}}
	\sum_{t=1}^{h_j -1}   \mathfrak{S}_{0}(\lbrace 0, h_{i}+ \dots + h_{j-1} + t\rbrace)  
	e^{-(h_2+\dots+h_r)/H} \\ \label{first-step-D1}
	&+ \sum_{\substack{2\leq j\leq r\\ j \leq i \leq r}}
	\sum_{t=1}^{h_j -1}   \mathfrak{S}_{0}(\lbrace h_{j}+ \dots + h_{i}, t \rbrace) e^{-(h_2+\dots+h_r)/H} \Bigg)
\end{align}
We focus on the first inner sum of~\eqref{first-step-D1}. Exchanging the order of summation, for each fixed $i$ and $j = i+k \geq i$,  we have
\begin{align*}
	&\sum_{\substack{h_{i},\dots,h_{i+k}>0 \\ h_{\ell} \equiv a_{\ell}-a_{\ell-1} \pmod{q}}}\sum_{t=1}^{h_{i+k} -1}\mathfrak{S}_{0}(\lbrace 0, h_{i}+ \dots + h_{i+k-1} + t\rbrace) e^{-(h_{i}+\dots+h_{i+k})/H}\\
	& \times  \sum_{\substack{h_{2},\dots,h_{i-1}, h_{i+k+1}, \dots, h_r >0 \\ h_{\ell} \equiv a_{\ell}-a_{\ell-1} \pmod{q}}} e^{-(h_{2}+\dots+h_{i-1}
		+ h_{i+k+1} + \dots + h_r )/H} 
\end{align*}
The second sum of the above is evaluated by~\eqref{first-D0}, and
\begin{align*}
	&\sum_{\substack{h_{i},\dots,h_{i+k}>0 \\ h_{\ell} \equiv a_{\ell}-a_{\ell-1} \pmod{q}}}\sum_{t=1}^{h_{i+k} -1}\mathfrak{S}_{0}(\lbrace 0, h_{i}+ \dots + h_{i+k-1} + t\rbrace) e^{-(h_{i}+\dots+h_{i+k})/H} \\
	&\quad = 
	\sum_{u>0}\mathfrak{S}_{0}(\lbrace 0, u\rbrace)
	\sum_{\substack{h>u \\ h \equiv a_{i+k} - a_{i-1} \pmod{q}}}e^{-h/H}
	\sum_{\substack{h_{i},\dots,h_{i+k-1}>0 \\ h_{\ell} \equiv a_{\ell}-a_{\ell-1} \pmod{q} \\ h_{i} +\dots +h_{i+k-1} <u}}
	\sum_{\substack{h_{i+k}> 0 \\ h_{i+k} \equiv a_{i+k}-a_{i+k-1} \pmod{q} \\ h_{i} +\dots +h_{i+k} = h}}  1 \\
	&\quad = 
	\sum_{u>0}\mathfrak{S}_{0}(\lbrace 0, u\rbrace)e^{-u/H}
	\sum_{\substack{h'>0 \\ h' \equiv a_{i+k} - a_{i-1} -u \pmod{q}}}e^{-h'/H}
	\sum_{\substack{h_{i},\dots,h_{i+k-1}>0 \\ h_{\ell} \equiv a_{\ell}-a_{\ell-1} \pmod{q} \\ h_{i} +\dots +h_{i+k-1} <u}}1 \\
	&\quad = 
	\sum_{u>0}\mathfrak{S}_{0}(\lbrace 0, u\rbrace)e^{-u/H}\Big(\frac{1}{k!}\big(\frac{u}{q}\big)^{k} + O(u^{k-1})  \Big) \Big(\frac{H}{q} + O(1)\Big) \\
	&\quad = 
	\frac{H}{k!q^{k+1}} S^{(k)}_0(H) + O(H^{k+\varepsilon})
\end{align*}
We get a similar estimate for the second inner sum of~\eqref{first-step-D1} involving $\mathfrak{S}_{0} (\lbrace h_j + \dots + h_i, t \rbrace)$ by making a change of variable to replace it by $\mathfrak{S}_{0} (\lbrace 0, r + h_{j+1} \dots + h_i \rbrace)$ with $r=h_j-t$, 
and we obtain
\begin{align*}
	\mathcal{D}_1(\mathbf{a},x)	 &= - 2\frac{K}{\alpha(x)\sqrt{\log x}} \sum_{i=2}^{r}\sum_{k=0}^{r-i} \displaystyle \Big(\frac{H}{q}\Big)^{r-2-k} \frac{H}{q^{k+1}k!}S^{(k)}_0(H)  + O(H^{r-3+\varepsilon})  \\
	&= - 2\frac{K}{\alpha(x)\sqrt{\log x}} \frac{H^{r-1}}{q^{r-1}}\Big((r-1)S_0(H) -  \frac{(\log H)^{-\frac12}}{K\sqrt{\pi}} \sum_{k=1}^{r-2} \frac{(r-1-k)}{k} \Big) + O(H^{r-2}(\log H)^{-\frac32}) \\
\end{align*}

The same ideas are used to estimate $\mathcal{D}_2(\mathbf{a},x)$.
Let $i$ and $j=i+k \geq i$ be fixed, and let us study the sum in $\mathcal{D}_2(\mathbf{a},x)$.
In the case $i=j$, this is
\begin{align*}
	\sum_{\substack{h_i>0 \\ h_i \equiv a_i-a_{i-1} \pmod{q}}} \sum_{1 \leq t_1 < t_2 \leq h_i -1}\mathfrak{S}_{0}(\lbrace t_1, t_2\rbrace)  e^{-h_i /H} = \Big(\frac{H^2}{q} + O(H)\Big)S_0(H)
\end{align*}
as we already saw in the case $r=2$.
In the case $k\geq 1$,
we have
\begin{align*}
	&\sum_{\substack{h_i,\dots,h_{i+k}>0 \\ h_{\ell} \equiv a_{\ell}-a_{\ell-1} \pmod{q}}} \sum_{t_1=1}^{h_i -1}\sum_{t_2=1}^{h_{i+k} -1}\mathfrak{S}_{0}(\lbrace t_1, h_{i}+ \dots + h_{i+k-1} + t_2\rbrace)  e^{-(h_i+\dots+h_{i+k})/H} \\
	& \quad = \sum_{1\leq t_1 < t'_2} \mathfrak{S}_{0}(\lbrace 0, t'_2 - t_1\rbrace) \sum_{h > t'_2}e^{-h/H}
	\sum_{\substack{h_{i},\dots,h_{i+k-1}>0 \\ h_{\ell} \equiv a_{\ell}-a_{\ell-1} \pmod{q} \\ t_1 < h_{i} +\dots +h_{i+k-1} <t'_2}}
	\sum_{\substack{h_{i+k}> 0 \\ h_{i+k} \equiv a_{i+k}-a_{i+k-1} \pmod{q} \\ h_{i} +\dots +h_{i+k} = h}}  1 \\
	& \quad = \sum_{u>0} \mathfrak{S}_{0}(\lbrace 0, u\rbrace) \sum_{t'_2 >u} \sum_{h > t'_2}e^{-h/H}
	\Big(\frac{1}{k!}\big(\frac{u}{q}\big)^{k} + O(u^{k-1})  \Big)\\	
	& \quad = \sum_{u>0} \mathfrak{S}_{0}(\lbrace 0, u\rbrace)e^{-u/H} 
	\Big(\frac{1}{k!}\big(\frac{u}{q}\big)^{k} + O(u^{k-1})  \Big) \Big(\frac{H^2}{q} + O(H)\Big)\\	
	& \quad = 	\frac{H^2}{k!q^{k+1}} S^{(k)}_0(H) + O(H^{k+1+\varepsilon}).
\end{align*}
We deduce that
\begin{align*}
	\mathcal{D}_2(\mathbf{a},x)	 
	&= \frac{K^2}{\alpha(x)^2 \log x} \sum_{i =2}^{r}\sum_{k=0}^{r-i}\Big(\tfrac{H}{q}\Big)^{r-2-k} \frac{H^2}{k!q^{k+1}} S^{(k)}_0(H) + O(H^{r-3+\varepsilon}) \\
	&= \frac{K^2}{\alpha(x)^2 \log x} \frac{H^r}{q^{r-1}}\Big((r-1)S_0(H) -  \frac{(\log H)^{-\frac12}}{K\sqrt{\pi}} \sum_{k=1}^{r-2} \frac{(r-1-k)}{k}  \Big) + O(H^{r-2}(\log H)^{-\frac32}).
\end{align*}

Wrapping up, we obtain
\begin{align*}
	N(x;q,\mathbf{a}) &= \frac{x}{q}\Big(\frac{K}{\sqrt{\log x}} \Big)^{r}\alpha(x)^{-r+1}
	\Big(\big(\tfrac{H}{q}\big)^{r-1}  +  \big(\tfrac{H}{q}\big)^{r-2}\sum_{i=1}^{r-1}\big(\mathcal{D}_0(a_i,a_{i+1},x) - \tfrac{H}{q} + \mathcal{D}_1(a_i,a_{i+1},x) + \mathcal{D}_2(a_i,a_{i+1},x)\big) \\
	&\quad -\big(\tfrac{H}{q}\big)^{r-2}\frac{(\log H)^{-\frac12}}{K\sqrt{\pi}}\big( \sum_{k=1}^{r-2}  \sum_{i=1}^{r-1-k} \frac{\delta(a_{i+k+1}\equiv a_i) - \frac{1}{q}}{k}   \big)
	+ O(H^{r-2}(\log H)^{-\frac32}) \Big).
\end{align*}
Then using $H = \frac{\sqrt{\log x}}{K} - \frac12 + O((\log x)^{-\frac{1}{2}})$, and the estimates for $\mathcal{D}_i(a,b,x)$, $i = 0,1,2$ from Theorem~\ref{Intro prop2.1} we obtain Conjecture~\ref{general-conjecture-intro}.

\section{Numerical data}  \label{section-numerical}

We present in this section some numerical data testing the approximation of Conjecture~\ref{main-conjecture-intro} for $N(x; q, (a,b))$. One of the challenges of the numerical testing is the change of scale introduced by the change of variable~\eqref{changeofvariables}, which gives $H = \sqrt{\log{x}}/K$.
The actual value of $N(x; q,(a,b))$ were obtained by using SageMath~\cite{Sage} on about 20 CPU cores in a Linux cluster for a couple of months, which allow us to take $x=10^{12}$. But then, $H  \approx 6.356$ in Theorem~\ref{Intro prop2.1}, which is very small even for this large value of $x$.

 There are some technical methods for computing the Euler products, whenever they converge, and their derivatives with enough precision,
and we used the following equality, which gives us a faster convergence: 
$$
\prod_{p\equiv 3\pmod{4}}\left(1-p^{-2s}\right) = \prod_{1\le j\le J}\left(\frac{L(2^js,\chi_4)}{\zeta(2^js)(1-2^{-2^js})}\right)^{1/2^j}\prod_{p\equiv 3\pmod{4}}\left(1-p^{-2^{J+1}s}\right)^{1/2^J}.
$$
Note that the rightmost hand side product converges much faster than the left hand side one. Also, its derivatives can be computed by taking the derivatives of the right hand side instead so that one might obtain some recursive formula.

We present in Table \ref{table:Prop D012 numerical} some numerical data for Conjecture~\ref{main-conjecture-intro}, for $q=5$ and $x=10^{12}$.  
There are 25 cases for $N(x; q, (a,b))$ in Table \ref{table:Prop D012 numerical},
but the conjectural asymptotic of Conjecture ~\ref{main-conjecture-intro} only depends on $b-a \mod q$, and there are then unavoidable fluctuations in the data for various pairs $(a,b)$ with the same value of $b-a \mod 5$. 
The fit between the numerical data and the conjecture is slightly better when $b-a \not\equiv 0 \mod 5$. The numerical data is also influenced by the bias of Theorem \ref{thm-SOS-AP}, which is of smaller magnitude that the bias of Conjecture~\ref{main-conjecture-intro} but in the opposite direction, and the data when $a=b=0$ in particular
shows the influence of both biases.  
We have used several asymptotic approximations of our conjecture in Table  \ref{table:Prop D012 numerical}.  
We used Conjecture~\ref{main-conjecture-intro}  as such with $J=1$ (the column labelled ``Conjecture~\ref{main-conjecture-intro}''), and we also used 
 the more complicated expression of Proposition~\ref{Prop D012 numerical} for $\mathcal{D}_0(a,b;x)+\mathcal{D}_1(a,b;x)+\mathcal{D}_2(a,b;x) $  in  ~\eqref{total-count-D012}, where we evaluate the exponential sums $E(q,v;H)$ exactly for each residue class (recall that $H = \sqrt{\log{x}}/K \approx 6.356$ when $x=10^{12}$).  We then replaced  $S_0(q,v;H)$  in that expression by the approximation of Theorem~\ref{Intro prop2.1} with $J=1$ (the column labelled ``Theorem~\ref{Intro prop2.1}''), and by the actual numerical value of $S_0(q,v;H)$ (the column labelled ``$S_0(q,v;H)$'').

We also present some numerical data for Theorem~\ref{Intro prop2.1} in Table~\ref{table:S(q,0,H)} and~\ref{table:S(q,3,H)} for larger values of $H$. We tested the asymptotic of Theorem~\ref{Intro prop2.1} for $J=1,2,3$ and
the  integral formula of Proposition  \ref{prop-lucia-2} for various values of $H$. For $H \approx 6.356$, larger values of $J$ or the integral formula of Proposition  \ref{prop-lucia-2} 
are not approximating well $S(q,v;H)$, but one can see the fit for larger values of $H$. The values of the constants $c_0(2), c_0(3), c_1(2), c_1(3)$ can be computed by taking more terms in the Taylor expansions of the proof of Theorem~\ref{Intro prop2.1}, similarly to the computations of  $c_0(1), c(1)$ in Section  \ref{proof-prop-section}.
We did not include those computations (which are lengthy but straighforward and not very interesting) in the paper. The numerical values are
\begin{align*}
&c_0(1) \approx 0.604541230, \;\; c_0(2) \approx 0.696827721, \;\;c_0(3) \approx 1.185903185\\
&c_1(1) \approx -0.167588374, \;\; c_1(2) \approx -0.054190676, \;\;c_1(3) \approx -0.328019051.
\end{align*}
\vspace{0.5cm}

\vfill\eject

\begin{table}[htb!]
\begin{tabular}{cc|cccc|ccc}
\hline 
\makecell{$a$} & \makecell{$b$} & \makecell{$N(x;q,(a,b))$} & 
\makecell{$S_0(q,v;H)$}
& \makecell{Theorem \ref{Intro prop2.1}}
& \makecell{Conjecture~\ref{main-conjecture-intro}}
& \makecell{Error1} & \makecell{Error2}  & \makecell{Error3} \\
\hline
\hline
  & $0$ & 4 108 $\cdot 10^6$ & 3 585 $\cdot 10^6$ & 3 219 $\cdot 10^6$  & 3 919 $\cdot 10^6$ & 1.1461 & 1.2763  & 1.0483\\
  & $1$ & 7 153 $\cdot 10^6$ & 6 949 $\cdot 10^6$ & 6 904 $\cdot 10^6$  & 6 841 $\cdot 10^6$ & 1.0294 & 1.0360  & 1.0457\\
$0$ & $2$ & 5 604 $\cdot 10^6$ & 5 430 $\cdot 10^6$ & 5 493 $\cdot 10^6$  & 5 426 $\cdot 10^6$ & 1.0320 & 1.0203  & 1.0329\\
  & $3$ & 8 055 $\cdot 10^6$ & 7 487 $\cdot 10^6$ & 7 858 $\cdot 10^6$  & 7 153 $\cdot 10^6$ & 1.0759 & 1.0250  & 1.1261\\
  & $4$ & 5 780 $\cdot 10^6$ & 5 626 $\cdot 10^6$ & 5 603 $\cdot 10^6$  & 5 738 $\cdot 10^6$ & 1.0274 & 1.0317  & 1.0073\\
\hline
  & $0$ & 5 777 $\cdot 10^6$ & 5 626 $\cdot 10^6$ & 5 603 $\cdot 10^6$ & 5 738 $\cdot 10^6$ &  1.0269 & 1.0312  & 1.0068\\
  & $1$ & 3 765 $\cdot 10^6$ & 3 585 $\cdot 10^6$ & 3 219 $\cdot 10^6$ & 3 919 $\cdot 10^6$ & 1.0503 & 1.1697  & 0.9607\\
$1$ & $2$ & 6 870 $\cdot 10^6$ & 6 949 $\cdot 10^6$ & 6 904 $\cdot 10^6$ & 6 841 $\cdot 10^6$ & 0.9886 & 0.9950  & 1.0043\\
  & $3$ & 5 354 $\cdot 10^6$ & 5 430 $\cdot 10^6$ & 5 493 $\cdot 10^6$ & 5 426 $\cdot 10^6$ &  0.9860 & 0.9747  & 0.9868\\
  & $4$ & 7 742 $\cdot 10^6$ & 7 487 $\cdot 10^6$ & 7 858 $\cdot 10^6$ & 7 153 $\cdot 10^6$ &  1.0341 & 0.9853  & 1.0824\\
\hline
  & $0$ & 8 050 $\cdot 10^6$ & 7 487 $\cdot 10^6$ & 7 858 $\cdot 10^6$ & 7 153 $\cdot 10^6$ & 1.0752 & 1.0244  & 1.1254\\
  & $1$ & 5 516 $\cdot 10^6$ & 5 626 $\cdot 10^6$ & 5 603 $\cdot 10^6$ & 5 738 $\cdot 10^6$ & 0.9804 & 0.9845  & 0.9613\\
$2$ & $2$ & 3 755 $\cdot 10^6$ & 3 585 $\cdot 10^6$ & 3 219 $\cdot 10^6$  & 3 919 $\cdot 10^6$ & 1.0474 & 1.1664  & 0.9580\\
  & $3$ & 6 838 $\cdot 10^6$ & 6 949 $\cdot 10^6$ & 6 904 $\cdot 10^6$  & 6 841 $\cdot 10^6$ & 0.9840 & 0.9903  & 0.9996\\
  & $4$ & 5 351 $\cdot 10^6$ & 5 430 $\cdot 10^6$ & 5 493 $\cdot 10^6$  & 5 426 $\cdot 10^6$ & 0.9853 & 0.9741  & 0.9861\\
\hline
  & $0$ & 5 609 $\cdot 10^6$ & 5 430 $\cdot 10^6$ & 5 493 $\cdot 10^6$  & 5 426 $\cdot 10^6$ & 1.0330 & 1.0212  & 1.0338\\
  & $1$ & 7 718 $\cdot 10^6$ & 7 487 $\cdot 10^6$ & 7 858 $\cdot 10^6$  & 7 153 $\cdot 10^6$ & 1.0309 & 0.9822  & 1.0790\\
$3$ & $2$ & 5 549 $\cdot 10^6$ & 5 626 $\cdot 10^6$ & 5 603 $\cdot 10^6$  & 5 738 $\cdot 10^6$ & 0.9863 & 0.9904  & 0.9670\\
  & $3$ & 3 765 $\cdot 10^6$ & 3 585 $\cdot 10^6$ & 3 219 $\cdot 10^6$  & 3 919 $\cdot 10^6$ & 1.0503 & 1.1697  & 0.9607\\
  & $4$ & 6 867 $\cdot 10^6$ & 6 949 $\cdot 10^6$ & 6 904 $\cdot 10^6$  & 6 841 $\cdot 10^6$ & 0.9882 & 0.9946  & 1.0039\\
\hline
  & $0$ & 7 156 $\cdot 10^6$ & 6 949 $\cdot 10^6$ & 6 904 $\cdot 10^6$  & 6 841 $\cdot 10^6$ & 1.0298 & 1.0364  & 1.0461\\
  & $1$ & 5 357 $\cdot 10^6$ & 5 430 $\cdot 10^6$ & 5 493 $\cdot 10^6$  & 5 426 $\cdot 10^6$ & 0.9864 & 0.9752  & 0.9872\\
$4$ & $2$ & 7 731 $\cdot 10^6$ & 7 487 $\cdot 10^6$ & 7 858 $\cdot 10^6$  & 7 153 $\cdot 10^6$ & 1.0326 & 0.9838  & 1.0808\\
  & $3$ & 5 497 $\cdot 10^6$ & 5 626 $\cdot 10^6$ & 5 603 $\cdot 10^6$  & 5 738 $\cdot 10^6$ & 0.9771 & 0.9812  & 0.9580\\
  & $4$ & 3 769 $\cdot 10^6$ & 3 585 $\cdot 10^6$ & 3 219 $\cdot 10^6$  & 3 919 $\cdot 10^6$ & 1.0512 & 1.1707  & 0.9615\\
\hline
\end{tabular}
\caption{The experimental value of $N(x;q,(a,b))$ versus several estimates for Conjecture~\ref{main-conjecture-intro} with $J=1$ for $q=5$ and $x=10^{12}$.
We used Conjecture~\ref{main-conjecture-intro}  as such with $J=1$ (the column labelled ``Conjecture~\ref{main-conjecture-intro}''), and we also used 
 the more complicated expression of Proposition~\ref{Prop D012 numerical} for $\mathcal{D}_0(a,b;x)+\mathcal{D}_1(a,b;x)+\mathcal{D}_2(a,b;x) $  in  ~\eqref{total-count-D012}, where we evaluate the exponential sums $E(q,v;H)$ exactly for each residue class (recall that $H = \sqrt{\log{x}}/K \approx 6.356$ when $x=10^{12}$).  We then replaced  $S_0(q,v;H)$  in that expression by the approximation of Theorem~\ref{Intro prop2.1} with $J=1$ (the column labelled ``Theorem~\ref{Intro prop2.1}''), and by the actual numerical value of $S_0(q,v;H)$ (the column labelled ``$S_0(q,v;H)$''). Error1, Error2, Error3 are the percentage errors for the 4th, 5th and 6th columns, respectively.}
\label{table:Prop D012 numerical}
\end{table}

\begin{table}[h!]
\hspace*{-.5cm}
\centering
\begin{tabular}{c|ccccc||cccc}
\hline
\makecell{$H$} & \makecell{$S(q,0;H)-H/q$} & \makecell{Prop.~\ref{prop-lucia-2}} & \makecell{$J=1$} & \makecell{$J=2$} & \makecell{$J=3$} & \makecell{Prop.~\ref{prop-lucia-2}} & \makecell{$J=1$} & \makecell{$J=2$} & \makecell{$J=3$} \\
\hline
\hline
6.356 & $-0.6093$ & -0.0087 & -0.6889 & -0.4122 & -0.1577 & 70.3362 &  0.8843 & 1.4779 & 3.8630\\
16 & $-0.8852$ & -0.5540 & -1.0240 & -0.8731 & -0.7804 & 1.5980 & 0.8645 & 1.0139 & 1.1343 \\
$10^2$ & $-1.3968$ & 1.2847 &  -1.5059 & -1.4354 & -1.4094 & 1.0862 & 0.9275 & 0.9731 & 0.9910 \\
$10^4$ & $-2.2932$ & -2.2839 &  -2.3289 & -2.3040 & -2.2994 & 1.0041 & 0.9846 & 0.9953 & 0.9973 \\
$10^6$ & $-2.9169$ & -2.9162 & -2.9337 & -2.9201 & -2.9184 & 1.0002 & 0.9943 & 0.9989 & 0.9995 \\
\hline
\end{tabular}
\caption{The numerical value of $S(q,0;H)-H/q$ for $q=5$ and various values of $H$ versus the asymptotic of Proposition~\ref{prop-lucia-2} and Theorem \ref{Intro prop2.1} for $J = 1, 2,3$.
The last 4 columns are the percentage errors.}
\label{table:S(q,0,H)}
\end{table}

\begin{table}[h!]
\hspace*{-.5cm}
\centering
\begin{tabular}{c|ccccc||cccc}
\hline
\makecell{$H$} & \makecell{$S(q,3;H)-H/q$} & \makecell{Prop.~\ref{prop-lucia-2}} & \makecell{$J=1$} & \makecell{$J=2$} & \makecell{$J=3$} & \makecell{Prop.~\ref{prop-lucia-2}} & \makecell{$J=1$} & \makecell{$J=2$} & \makecell{$J=3$} \\
\hline
\hline
6.356 & 0.0327 & 0.0728 & 0.0811 & 0.0596 & -0.0108 & 0.4485 & 0.4029 & 0.5485 & -3.0166 \\
16 & 0.0788 & 0.0919 & 0.1036 & 0.0919 & 0.0663 & 0.8575 & 0.7609 & 0.8581 & 1.1900 \\
$10^2$ & 0.1120 & 0.1171  & 0.1262 & 0.1207 & 0.1135 & 0.9565 & 0.8875 & 0.9278 & 0.9868 \\
$10^4$ & 0.1456 & 0.1461  & 0.1490 & 0.1471 & 0.1458 & 0.9966 & 0.9770 & 0.9899 & 0.9986 \\
$10^6$ & 0.15813 & 0.15819 & 0.1592 & 0.1581 & 0.1577 & 0.9997 & 0.9935 & 1.0001 & 1.0030 \\
\hline
\end{tabular}
\caption{The numerical value of $S(q,3;H)-H/q$ for $q=5$ and various values of $H$ versus the asymptotic of Proposition~\ref{prop-lucia-2} and Theorem \ref{Intro prop2.1} for $J = 1, 2,3$.
The last 4 columns are the percentage errors.}
\label{table:S(q,3,H)}
\end{table}

\newpage

\end{document}